\documentclass[11pt,reqno]{amsart}

\usepackage[dvipsnames]{xcolor}
\usepackage{amsthm,amsmath}
\usepackage{graphicx}
\usepackage{dsfont}
\usepackage{booktabs}
\usepackage{tikz}
\usepackage{subcaption}
\usepackage{natbib}
\usetikzlibrary{arrows.meta, arrows, positioning, shadows}
\usepackage{upgreek}
\usepackage{hyperref}
\usepackage{mathrsfs}
\usepackage{bm,bbm}
\usepackage{amsfonts}
\usepackage{amssymb}
\usepackage{csquotes}
\usepackage{stmaryrd}
\usepackage{tikz-cd}
\usepackage{cancel}
\usepackage{siunitx}
\usepackage{algorithm}
\usepackage{algpseudocode}
\usepackage{caption}
\usepackage{enumerate}
\usepackage{soul}
\usepackage{ulem}
\usepackage[margin=1.1in]{geometry}
\numberwithin{equation}{section}

\theoremstyle{plain}
\newtheorem{theorem}{Theorem}[section]
\newtheorem{lemma}{Lemma}[section]
\newtheorem{assumption}{Assumption}[section]
\newtheorem{proposition}[theorem]{Proposition}
\newtheorem{corollary}[theorem]{Corollary}

\newtheorem{definition}{Definition}[section]
\newtheorem{remark}{Remark}[section]

\newcommand{\E}{\mathbb{E}}
\newcommand{\R}{\mathbb{R}}

\newcommand{\N}{\mathbb{N}}

\newcommand{\calC}{{\mathcal C}}

\newcommand{\calK}{{\mathcal K}}
\newcommand{\calL}{{\mathcal L}}

\newcommand{\calO}{{\mathcal O}}
\newcommand{\calP}{{\mathcal P}}

\newcommand{\calX}{{\mathcal X}}

\newcommand{\A}{\mathbb{A}}

\newcommand{\NN}{{\mathcal N \mathcal N}}
\newcommand{\argmin}{\mathop{\rm argmin}}

\tikzset{
  bigbox/.style={
    draw=gray!60,
    fill=gray!5,
    rounded corners=8pt,
    line width=0.9pt,
    inner sep=2.5ex,
    drop shadow={opacity=0.15}
  },
  nebox/.style={
    draw=blue!60!black,
    fill=blue!8,
    rounded corners=4pt,
    line width=0.8pt,
    inner sep=1ex,
    text width=4.8cm,
    align=center,
    drop shadow={opacity=0.12}
  },
  samplebox/.style={
    draw=teal!60!black,
    fill=teal!8,
    rounded corners=4pt,
    line width=0.8pt,
    inner sep=1ex,
    text width=4.5cm,
    align=center,
    drop shadow={opacity=0.12}
  },
  fbbox/.style={
    draw=purple!60!black,
    fill=purple!8,
    rounded corners=4pt,
    line width=0.8pt,
    inner sep=1ex,
    text width=4.8cm,
    align=center,
    drop shadow={opacity=0.12}
  },
  lossbox/.style={
    draw=orange!70!black,
    fill=orange!10,
    rounded corners=4pt,
    line width=0.8pt,
    inner sep=1ex,
    text width=4.8cm,
    align=center,
    drop shadow={opacity=0.12}
  },
  arrow/.style={
    -{Latex[length=2.5mm,width=2.5mm]},
    line width=0.9pt,
    draw=gray!70
  }
}

\title{Operator Learning for Families of Finite-State Mean-Field Games}

\author{William Hofgard}
\address{Department of Mathematics, University of California, Berkeley}
\email{hofgard@berkeley.edu}

\author{Asaf Cohen}
\address{Department of Mathematics, University of Michigan, Ann Arbor}
\email{asafc@umich.edu}

\author{Mathieu Lauri\`ere}
\address{NYU Shanghai Center for Data Science\\
NYU-ECNU Institute of Mathematical Sciences, NYU Shanghai}
\email{mathieu.lauriere@nyu.edu}

\thanks{W. Hofgard is supported by the National Science Foundation Graduate Research Fellowship Program under Grant No. 2146752. A. Cohen acknowledges the support of the U.S. National Science Foundation, Grant DMS-2505998. Any opinions, findings, and conclusions or recommendations expressed in this material are those of the author(s) and do not necessarily reflect the views of the National Science Foundation.}

\date{\today}

\begin{document}
\maketitle
\begin{abstract}
Finite-state mean-field games (MFGs) arise as limits of large interacting particle systems and are governed by an MFG system, a coupled forward–backward differential equation consisting of a forward Kolmogorov--Fokker--Planck (KFP) equation describing the population distribution and a backward Hamilton--Jacobi--Bellman (HJB) equation defining the value function. Solving MFG systems efficiently is challenging, with the structure of each system depending on an initial distribution of players and the terminal cost of the game. We propose an operator learning framework that solves parametric families of MFGs, enabling generalization without retraining for new initial distributions and terminal costs. We provide theoretical guarantees on the approximation error, parametric complexity, and generalization performance of our method, based on a novel regularity result for an appropriately defined flow map corresponding to an MFG system. We demonstrate empirically that our framework achieves accurate approximation for two representative instances of MFGs: a cybersecurity example and a high-dimensional quadratic model commonly used as a benchmark for numerical methods for MFGs.
\end{abstract}
\noindent{\bf Keywords:} mean-field games, operator learning, parametric complexity

\noindent{\bf AMS Classification:} 
91A07,
35Q89,
68T07,
35A35,
60J27,
91A15

\tableofcontents
\section{Introduction}
\label{sec:intro}
Mean-field games (MFGs), introduced by~\cite{Huang2006} and~\cite{LasryLions}, model the behavior of stochastic games with many identical players by considering the limiting situation with an infinite population. While a large portion of the corresponding literature considers continuous state spaces, MFGs with finite state spaces find applications in economics, epidemic prevention, cybersecurity, resource allocation, and multi-agent reinforcement learning, and beyond \citep{gomes2014socio,MR3575619,MR4413053,mao2022mean,mfgs-marl}. The theory of MFGs is well-established, with results concerning existence, uniqueness, and connections with finite-player games in~\cite{Gomes2013,bay-coh2019, cec-pel2019}; see the books~\cite{CarmonaDelarue_book_I, CarmonaDelarue_book_II} for more background. Nonetheless, numerically solving finite-state MFGs remains challenging, especially over large state spaces. 

Machine learning-based methods have proven promising for overcoming the numerical challenges associated high-dimensional MFGs, in both continuous and finite state spaces; see ~\cite{fouque2020deep, carmona2021convergence, carmona2022convergence,min2021signatured,han2024learning} for deep learning methods and~\cite{guo2019learningMFG,subramanian2019reinforcementMFG,elie2020convergence,cui2021approximately} for reinforcement learning methods. However, these methods treat each MFG individually, requiring the user to rerun the method anew for each MFG instance. Several recent works, such as~\cite{coh-lau-zel2025}, propose more general methods to learn MFGs equilibria as a function of the initial distribution by exploiting the connection with the master equation, a nonlinear PDE characterizing finite-state MFGs~\citep{CardaliaguetDelarueLasryLions}. However, these methods rely on problem-specific loss functions and cannot be extended to learn MFG equilibria as a function of the model's parameters such as its cost functions.

In this work, we frame MFG equilibria as outputs of an \emph{operator}, called the \textbf{flow map}, which maps initial distributions and cost functions to the corresponding Nash equilibrium. We then train a neural network (NN) to learn this operator.

\noindent\textbf{Main Contributions.} Our main contributions are as follows: 
\begin{itemize}
    \item \textbf{Algorithm:} We combine Picard iteration and operator learning to approximate the flow map operator for parametrized families of finite-state MFGs (see Fig.~\ref{fig:diagram}).
    \item \textbf{Parametric complexity:} We prove that the flow map can be approximated to accuracy $\mathcal{O}(K^{-1/(d+k+2)})$ using an NN with width $W = \mathcal{O}(K^{(2(d + k) + 3)/(2(d + k) + 4)})$ and depth $L = \mathcal{O}(\log(d + k + 1))$, where $d$ is the number of states, $K$ is a bound on the NN weights, and $k$ is the dimension of the set of parameters specifying the family of MFGs.
    \item \textbf{Generalization error:} We prove that for such $W$ and $L$, given $n$ samples produced via Picard iteration, our method's generalization error is bounded by $\mathcal{O}(n^{-1/(d + k + 4)}\log (n))$.
    \item \textbf{Numerical experiments:} We demonstrate the accuracy and scalability of our method on two standard finite-state MFG benchmarks.
\end{itemize}
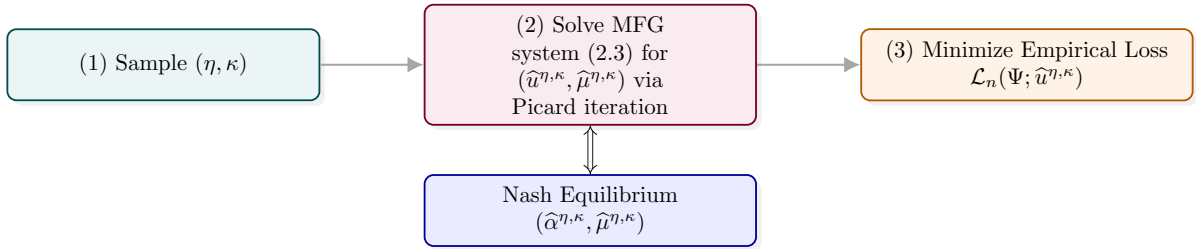
\begin{figure}[h]
\centering
\resizebox{\linewidth}{!}{
\begin{tikzpicture}[
    node distance=2.8cm and 1.6cm,
    box/.style={minimum width=3cm, minimum height=1.1cm, align=center},
    every node/.append style={font=\small}
]
  \node[samplebox, minimum width=2cm, minimum height=1.1cm, align=center] (B) {(1) Sample $(\eta, \kappa)$};
  \node[fbbox, box, right=of B] (C) {(2) Solve MFG system~\eqref{eqn:mfg} for\\$(\widehat{u}^{\eta, \kappa}, \widehat{\mu}^{\eta, \kappa})$ via Picard iteration};
  \node[lossbox, box, right=of C] (D) {(3) Minimize Empirical Loss\\$\mathcal{L}_n(\Psi; \widehat{u}^{\eta, \kappa})$};

  \node[nebox, box, below=0.75cm of C] (A) {Nash Equilibrium\\$(\widehat{\alpha}^{\eta, \kappa}, \widehat{\mu}^{\eta, \kappa})$};

  \draw[arrow] (B.east) -- node[above]{} (C.west);
  \draw[arrow] (C.east) -- (D.west);
  \draw[implies-implies,double equal sign distance] (C.south) -- (A.north);
\end{tikzpicture}
}
\caption{Given (1) sample initial distributions $\eta$ and cost parameters $\kappa$, we bypass the need to compute the optimal controls and flow of measures (Nash equilibrium) of an MFG by (2) solving the MFG system via Picard iteration. We then use the resulting trajectories to (3) approximate the solution operator for the family using a neural network, trained by minimizing an empirical loss over the samples from (1). In practice, the last step uses stochastic gradient descent (see Algorithm~\ref{alg:mfg-sampling}).}
\label{fig:diagram}
\end{figure}

\noindent\textbf{Operator Learning.}
Independent from the literature on MFGs is that of \textit{operator learning}, an umbrella term that typically describes machine learning methods for approximating maps between function spaces. One natural application of such methods is to partial differential equations (PDEs), and the general framework has been applied with impressive success to fluid dynamics in \cite{li2021fourier, kovachki-neuralop}, astrophysics in \cite{Mao2023}, and large-scale weather forecasting in \cite{Kurth2023, Lam2023}. In most applications, one attempts to learn the operator that maps the initial data of a PDE, belonging to some Banach space, to its solution, belonging to a potentially distinct Banach space; see \cite{kovachki-review, boulle-review}, for overviews of the field of operator learning from a mathematical perspective. The development of novel architectures for operator learning, as in \cite{li2021fourier} and \cite{Lu2021, Wang2021} has allowed for its recent empirical success.

However, instead of leveraging architectural advances in the field, our insight is inspired by the work of~\cite{Lanthaler2025}. The authors introduce the \textit{curse of parametric complexity}
for operator learning: given any compact subset $K$ of an infinite-dimensional Banach space, there exists an operator from $K$ into another Banach space that can only be approximated with a functional of neural network type (i.e., the composition of a linear operator to a Euclidean space and a neural network) whose width and depth are exponential in the approximation error.~\cite{Lanthaler2025} circumvent this issue for first-order Hamilton--Jacobi--Bellman (HJB) equations with an initial condition, learning the operator that maps initial conditions to solutions. Associated with each HJB equation is a system of ODEs, also referred to as the characteristics of the PDE. By learning the flow map for the characteristics and then reconstructing the solution by interpolation,~\cite[Theorem 5.1]{Lanthaler2025} beats the so-called curse of parametric complexity, enabling operator learning with neural networks of bounded width and depth using a method they label HJ-Nets. Given the similarity between the forward-backward ODE system for MFGs and the characteristics of first-order HJB equations, we take this as inspiration for our approach to learning MFG equilibria; see Appendix~\ref{sec:hamiltonian-flow} for a more in-depth comparison.

\noindent\textbf{Related Work.} We clarify the connection between our contributions and several closely related works on MFGs.~\cite{coh-lau-zel2025} proposes and analyzes two methods to solve the master equation for finite-state MFGs, handling varying initial distributions. However, their methods do not generalize to the setting of MFGs with varying cost functions as we consider. 
\cite{chen2023physicsinformed,Huang2025} proposes operator learning methods for continuous space and time MFGs by learning the solution as a function of the initial distribution. Although philosophically similar to our operator learning approach, their methods do not apply to finite-state space MFGs, and neither method provides a solution for parametrized families of MFGs with varying cost functions. 
Finally, reinforcement learning methods for population-dependent policies tackle discrete time MFGs (see~\cite{perrin2022generalization,li2023populationsizeaware, zhang2025stochastic, wu2025population} for recent work in this domain), while we focus on continuous time models. To our knowledge, even in discrete time, no method has been proposed to solve parameterized families of MFGs at once.

\noindent\textbf{Organization.} In Section~\ref{sec:background-all}, we describe finite-state MFGs and the forward-backward ODE system that characterizes MFG equilibria, including the assumptions that we place on parametrized families of MFGs. Next, we describe the flow map, mapping parameters to equilibria. In Section~\ref{sec:method}, we describe our operator learning method in detail. In Section~\ref{sec:guarantees}, we present the associated approximation, parametric complexity, and generalization guarantees, with technical proofs in the appendix. Finally, in Section~\ref{sec:num-exps}, we provide numerical experiments for two finite-state MFGs often used as benchmarks for numerical methods: a simple model of cybersecurity and a high-dimensional quadratic model.

\section{Background}
\label{sec:background-all}
We first provide provide a comprehensive overview of finite-state MFGs for the unfamiliar reader in Section~\ref{sec:mfg-background}, referring to Appendix~\ref{sec:jump-process} for more details. Then, in Section~\ref{subsec:flow-maps}, we describe the object that we seek to approximate via operator learning: the flow map for a parametrized family of MFGs.

\subsection{Finite-State MFGs}
\label{sec:mfg-background}
\noindent\textbf{Controls and state dynamics.} In a finite-state MFG, a representative player chooses \textit{Markovian controls} taking values in a compact set of rates, $\A \subseteq \R_+ := [0, \infty)$. Specifically, the player's control $\alpha$ is a time-dependent $d \times d$ matrix with values in $\A$, with rows $(\alpha_y(t, x))_{y\in[d]}$ and individual entries $\alpha_y(t, x)$ determining the rate of transition between state $x$ to state $y$ at time $t$. 
When starting with initial distribution $\eta$ and using control $\alpha$, the player's state, denoted by $\calX^{\eta,\alpha}_t \in [d]$ at time $t$, obeys the dynamics of a continuous-time Markov chain with $X^{\eta,\alpha}_0 \sim \eta$ and:
\begin{align}
\label{eqn:markov-dynamics}
    \Pr(\calX^{\eta,\alpha}_{t + h} = y \mid \calX^{\eta,\alpha}_t = x) = \alpha_{y}(t, x)h + o(h), \quad h \to 0^+.
\end{align}

\noindent\textbf{Cost function.} The representative player aims to minimize a cost functional over the time interval $[0, T]$. The cost depends not only on the player's chosen control and state at time $t \in [0, T]$, but also on the population distribution $\mu(t) \in \calP([d])$, where $\calP([d])$ is the set of probability measures on $[d] := \{1, \ldots, d\}$, identifiable with the probability simplex in $\R^d$. 

We denote by $g$ the terminal cost and $f,F$ two running costs depending on the player's chosen control and the population distribution, respectively.
If the population distribution's flow $\mu = (\mu(s))_{s \in [0,T]}$ is given, the representative player aims to minimize the total expected cost over controls $\alpha = (\alpha_y(s,x))_{s \in [0,T], x \in [d], y\in[d]}$ :
\begin{equation}
\label{eqn:def-J-integral}
        J_\eta(\alpha, \mu) = \mathbb{E} \left[  \int_{0}^T \Big( f(\mathcal{X}^{\eta,\alpha}_s, \alpha(s, \mathcal{X}^{\eta,\alpha}_s)) + F(\mathcal{X}^{\eta,\alpha}_s, \mu(s)) \Big) ds  + g(\mathcal{X}^{\eta,\alpha}_T, \mu(T))\right].
\end{equation}
Notice that since $\mu$ is a deterministic flow of measures and $\mu(0)=\eta$ is fixed, the control may depend implicitly on the population distribution through time. 
When $\mu$ is given, this is a standard stochastic optimal control problem. However, $\mu$ should be determined endogenously as the population evolution resulting from the players' optimal behavior.

\noindent\textbf{MFG equilibrium.} 
This leads us naturally to the idea of an MFG equilibrium, a form of Nash equilibrium in which the population distribution is the same as the representative player's distribution.
\begin{definition}
\label{def:mfg-equilibrium}
An \textbf{MFG equilibrium} for an initial distribution $\eta \in \calP([d])$ is a pair $(\overline{\alpha}, \overline{\mu})$ such that: {\bf (1)} $\overline{\alpha}$ minimizes the cost functional $J_\eta(\cdot, \overline{\mu})$ and {\bf (2)} for every $t \in [0, T]$, $\overline{\mu}(t) = \mathrm{Law}(\calX_t^{\eta, \overline{\alpha}})$.
\end{definition}
Observe that the MFG equilibrium \emph{depends} on the initial distribution $\eta \in \calP([d])$. This presents one of the primary difficulties that we aim to address: can one efficiently compute MFG equilibria simultaneously \emph{for arbitrary initial distributions}? Before tackling this question, we first explain how one can solve an MFG for a \emph{fixed} initial distribution.

\noindent\textbf{Forward-backward ODE system.} The two points in Definition~\ref{def:mfg-equilibrium} can be translated into two equations: one for the value function $u(t, x)$ of the representative player (i.e., the optimal cost attainable at time $t$ in state $x$), and one for the evolution of the population distribution. In finite-state, continuous-time MFGs, both take the form of ordinary differential equations (ODEs). Then, 
MFG equilibria can be characterized as solutions of a forward--backward system of coupled ODEs, each in dimension $d$. More precisely, $(\overline\alpha,\overline\mu)$ is an MFG equilibrium if and only if $\overline\alpha_y(t,x) = \gamma^*_x(y,\Delta_y u (t,\cdot)) := \argmin_{a} \{f(x, a) + a \cdot \Delta_y u(t,\cdot)\}$ where $\Delta_x f := (f(y) - f(x))_{y \in [d]} \in \R^d$ plays the role of a discrete gradient and $(u,\mu)$ solve the \textbf{MFG system}:
\begin{align}
\label{eqn:mfg}
\begin{cases}
    \frac{d}{dt} u (t,x) + \bar H(x, \mu(t), \Delta_x u (t,\cdot)) = 0, & (t,x) \in [0, T] \times [d] \qquad \textbf{(HJB)} \\
    \frac{d}{dt} \mu (t,x) = \sum_{y\in [d]} \mu (t,y) \gamma^*_x(y,\Delta_y u (t,\cdot)), & (t,x) \in [0, T] \times [d] \qquad \textbf{(KFP)} \\
    \mu (0, x) = \eta(x), \quad u(T, x) = g(x, \mu(T)), & x \in [d],
\end{cases}
\end{align}
with $\bar{H}$ being the extended Hamiltonian of the representative player's control problem, defined in terms of the \textbf{Hamiltonian} $H$ as:
\begin{align}
\label{eqn:hamiltonian}
    \textstyle H(x, p) :=  \min_{a} \Big\{f(x,a) + \sum_{y \neq x} a_y p_y\Big\}, 
    \qquad
    \bar{H}(x, \eta, p) := H(x, p) + F(x, \eta).
\end{align}
We will sometimes write $u^\eta$ and $\mu^\eta$ to stress the dependence on the initial distribution $\eta$. We refer to the first equation as \textbf{Hamilton--Jacobi--Bellman (HJB)} equation and to the second equation as the \textbf{Kolmogorov--Fokker--Planck (KFP)} equation.

The above MFG system admits a unique solution under standard assumptions; see Appendix~\ref{sec:jump-process} and~\cite{bay-coh2019, cec-pel2019} for more details. 
For simplicity, we focus on the following sufficient condition:
\begin{assumption}
\label{asmp:mfg-uniqueness}
    The minimizer $\gamma^*(x, p)$ of the Hamiltonian $H$ is unique. Moreover, $H$ is strictly concave in $p$ and twice continuously differentiable with Lipschitz second derivatives. Finally, the costs $F$ and $g$ are continuously differentiable with Lipschitz derivatives, and both are \textbf{Lasry--Lions monotone} in the sense that for both $\phi = F, g$,
    \begin{align}\label{eqn:LL-monotone}
    \textstyle
        \sum_{x \in [d]} (\phi(x, \eta) - \phi(x, \hat{\eta})) (\eta_x - \hat{\eta}_x) \geq 0, \qquad \eta, \hat{\eta} \in \mathcal{P}([d]).
    \end{align} 
\end{assumption}
We note that the first part of this assumption holds when $f$ is strictly convex in $a$. Additionally, Lasry--Lions monotonicity can be interpreted as the player's dislike for congestion (e.g., $\eta_x$ close to one). Under the assumptions outlined above, the forward-backward system in~\eqref{eqn:mfg} attains a unique solution $(u^\eta, \mu^\eta)$, the MFG equilibrium. The argument proving existence follows from a fixed-point argument via Schauder's fixed-point theorem, while uniqueness results from Assumption~\ref{asmp:mfg-uniqueness}. For more details, see \cite[Section 7.2.2]{CarmonaDelarue_book_I}, for instance. 

\subsection{Flow Maps and the Master Equation}
\label{subsec:flow-maps}

We now turn to the question of solving the MFG for any initial distribution $\eta$. Although solving the MFG system~\eqref{eqn:mfg} via Picard iteration for a given $\eta \in \calP([d])$ is generally tractable, we aim to solve the system for all such $\eta$, and hence cannot rely solely on the MFG system.

\noindent\textbf{Master field.} We begin by considering the value function $u^\eta$, which solves the HJB equation in the MFG system~\eqref{eqn:mfg} with initial distribution $\eta$. The value function depends implicitly on the mean field, and we make this dependence \emph{explicit} by introducing the \textbf{master field} $U$, defined such that $U(t, x, \mu^{\eta}(t)) = u^{\eta}(t)$ for all $(t, x, \eta) \in [0, T] \times [d] \times \calP([d])$.
This object plays a central role in theory of MFGs and establishing a rigorous connection to finite-player games; see~\cite{bay-coh2019, cec-pel2019} and Appendix~\ref{sec:master-eq} for more details. The master field $U$ is also very relevant for applications: if the master field is known, then it can be evaluated along any flow of measures $\mu(t)$. Additionally, $U(t,x,\mu)$ is the optimal cost that a representative player can obtain if starting in state $x$ at time $t$, with the of the population starting in distribution $\mu$ and playing according to the equilibrium control. 

Methods, such as \cite{coh-lau-zel2025}, that learn $U$ by exploiting its connection with a nonlinear PDE called the master equation suffer from two limitations: {\bf (1)} the computation of the loss function is complex and costly, and {\bf (2)} they cannot handle situations where the terminal cost varies, as the loss function is defined in terms of a fixed terminal cost. For this reason, we develop a new approach, relying on the concept of flow map.

\noindent{\textbf{Flow map.}} Instead of focusing on the aforementioned master field, we will consider a function which maps the initial distribution and the terminal cost to the value function. In other words, we would like to learn the operator (since $g$ is a function)
\begin{align}
    \label{eq:flow-map}
    \Phi: (t,\eta,g) \mapsto u^{\eta,g}(t),
\end{align}
where $u^{\eta, g}$ is the value function for the MFG system~\eqref{eqn:mfg} with initial distribution $\eta \in \calP([d])$ and terminal cost $g$. We recall that the control can be recovered from the value function using the relation $\widehat\alpha_y(t,x) = \gamma^*_x(y,\Delta_y u^{\eta, g} (t,\cdot))$. In turn, obtaining $\Phi$ concretely gives access to the MFG equilibrium for \emph{any initial condition $\eta$ and terminal cost $g$}. In principle, the operator could be extended to include running costs and dynamics. We comment that, in line with the operator learning approach for HJB equations proposed in~\cite{Lanthaler2025}, the MFG system in Equation~\eqref{eqn:mfg} can be viewed as the characteristics of the master field. In the same sense, our method is an operator learning method because we learn the characteristics of the master field to obtain its solution operator.

\noindent{\textbf{Terminal cost parameterization.}} 
When endowed with an appropriate norm, the set of all Lipschitz functions on the probability simplex is an infinite-dimensional Banach space. However, to obtain precise approximation and generalization guarantees, we restrict our attention to a \emph{parameterized} class of terminal costs in this paper. Given a parameter $\kappa \in \R^k$, we denote by $g_\kappa$ the corresponding terminal cost function. Then, the flow map we focus on in the sequel is defined as follows.
\begin{definition}
\label{def:flow-map}
Given a parametrized family of terminal conditions, the \textbf{flow map} $\Phi : [0, T] \times \calP([d]) \times \calK \to \R^d$ is defined by $\Phi(t, \eta, \kappa) := u^{\eta, \kappa}(t)$, where $u^{\eta, \kappa}$ is the value function for the MFG system~\eqref{eqn:mfg} with initial distribution $\eta \in \calP([d])$ and terminal cost $g_\kappa$.
\end{definition}

We make two key remarks. First, the initial distribution $\eta$ and the parameter $\kappa$ may be high-dimensional, which justifies using neural networks to approximate $\Phi$. Second, contrary to the aforementioned master field $U$, the flow map $\Phi$ does not satisfy a PDE and hence it will require a novel training algorithm, based on the MFG equilibrium characterization. 

We conclude with a regularity condition that allows the rigorous study of the approximation of $\Phi$ by neural networks in the next section. This assumption holds in the test cases consider below in our numerical experiments (see Section~\ref{sec:num-exps}).
\begin{assumption}
\label{asmp:param-terminal-cost}
There exists a compact set of parameters $\calK \subseteq \R^k$ such that for all $\kappa \in \calK$, the $g_\kappa : [d] \times \calP([d]) \to \R$ satisfies Assumption~\ref{asmp:mfg-uniqueness}. Moreover, for any $\kappa, \kappa' \in \calK$, there exists a constant $C > 0$ such that
$
    |g_{\kappa}(x, \mu) - g_{\kappa'}(x, \mu)| \leq C|\kappa - \kappa'|,
$
uniformly in $(x, \mu) \in [d] \times \calP([d])$.
\end{assumption}

\section{Learning Flow Maps for MFGs}
\label{sec:method}
In this section, we outline our algorithmic approach to learning MFG equilibria, motivated by the HJ-Net algorithm of~\cite{Lanthaler2025}. Recall that we aim to learn an approximation of the flow map $\Phi : [0, T] \times \calP([d]) \times \calK \to \R$ that maps a time, an initial condition, and a parameter $\kappa \in \calK$ (corresponding to a terminal condition $g_\kappa$) to the value function $u^{\eta, \kappa}(t)$. As in \cite[Section 4]{Lanthaler2025}, learning the flow map requires sample trajectories. We approximate $\Phi$ by a neural network which is trained using samples consisting of $(t, \eta, \kappa)$ and the associated $u^{\eta, \kappa}(t)$.

\noindent\textbf{Sampling method.} 
We generate i.i.d. samples $(\eta, \kappa) \sim \rho$, where $\rho$ is a joint distribution on $\calP([d]) \times \calK$. Then, we compute $u^{\eta, \kappa}$. 
Since this value function is coupled with the flow of measures $\mu^{\eta,\kappa}$ that solves the MFG system~\eqref{eqn:mfg}, we solve this system by Picard iteration: given an initial guess, we alternatively solve the forward KFP equation and the backward HJB equation to update $\mu$ and $u$ respectively. We thus obtain an (approximate) solution of~\eqref{eqn:mfg}. In our implementation, we use a temporal finite-difference scheme with a mesh of $M$ steps, yielding an approximate solution $(\tilde{u}^{\eta,g}_i, \tilde{\mu}^{\eta,g}_i)_{i=0,\dots,M}$. See Appendix~\ref{sec:picard} for additional details.

We denote the Picard iteration map for an MFG with terminal condition $g$ by $\Gamma_g : \calP([d]) \to (\R^{d})^{M + 1}$. Intuitively, $\Gamma_g: \eta \mapsto u^{\eta, g}$. In practice, $\Gamma_g(\eta)$ is the vector of values $\tilde{u}^{\eta,g}_j \approx u^{\eta, g}(j T / M, \cdot) \in \R^d$, $j=0,\dots,M$.

\noindent\textbf{Architecture.} We approximate $\Phi$ by a neural network. Since our goal in the next section is to obtain theoretical guarantees, we focus here on a relatively simple architecture, but more complex architectures are explored in our numerical experiments. We limit ourselves to fully-connected ReLU neural networks $\phi : \R^{k_1} \to \R^{k_2}$ of depth $L$. Following the convention in \cite{Jiao2023}, from which we derive our generalization guarantee, such networks are recursively defined by $\phi_0(x) = x$, $\phi_{j + 1}(x) = \sigma(A_j \phi_{j}(x) + b_{j})$ for $j = 1, \ldots, L - 1$, and $\phi(x) := A_L \phi_L(x)$. Above, the weights satisfy $A_{j} \in \R^{N_{j + 1} \times N_{j}}$ for $j = 0, \ldots, L$ and $b_{j} \in \R^{N_{j + 1}}$ for $j = 0, \ldots, L - 1$, where $N_0 = k_1$ and $N_{L + 1} = k_2$. By the width of a neural network, we refer to $W := \max\{N_1, \ldots, N_L\}$, the maximum number of neurons in a hidden layer. For brevity, we denote such a network by $\phi(x; A, b)$, where $A = (A_0,\dots,A_{L-1})$ and $b = (b_0,\dots,b_{L-1})$.

\noindent\textbf{Training method.} 
We learn the flow map by training such a neural network on the samples generated by Picard iteration. To alleviate the notation, we denote $x = (jT/M,\eta,\kappa)$ and $y = \tilde{u}^{\eta,\kappa}_j$, where $j \in [M]$ and we recall that $\tilde{u}^{\eta,\kappa}$ is the discrete time approximation of the value function $u^{\eta,\kappa}$.
Given samples $\{x_i, y_i\}_{i = 1}^n$ from the procedure outlined above, we minimize the empirical loss
\begin{align}
\label{eq:emp-loss}
\textstyle
    \calL_n(A, b; \{x_i, y_i\}_{i=1}^n) := \frac{1}{n} \sum_{i = 1}^n \ell(\phi(x_i; A, b), y_i),
\end{align}
where $\ell : \R^d \times \R^d \to \R$ is a convex loss and the minimum is taken over $A = \{A_j\}_{j=0}^L$ and $b = \{b_{j}\}_{j=0}^{L - 1}$ simultaneously. In practice, this is accomplished using batch stochastic gradient descent (SGD) with a standard optimizer such as AdamW~\citep{adamw}. If $(A^*, b^*) := \argmin_{A, b} \calL_n(A, b; \{x_i, y_i\}_{i=1}^n)$ (noting that these parameters depend on the sampled trajectories), we define our approximate flow map $\Psi_n(t, \eta, \kappa) := \phi(t, \eta, \kappa; A^*, b^*)$. This procedure is summarized in Algorithm~\ref{alg:mfg-sampling} below (written using SGD as the optimizer for simplicity).
\begin{algorithm}[h]
\caption{Sampling and Learning Flow Map for a Family of MFGs}\label{alg:mfg-sampling}
\begin{algorithmic}[1]
    \Require Number of time steps $M \in \N$, parameter set $\calK \subset \R^k$, number of samples $n \in \N$, Picard solver $\Gamma$, number of training steps $m_{\mathrm{train}}$, mini-batch size $n_{\mathrm{mini}} < n$, learning rate $\{\gamma_j\}_{j \in \N}$
    \State Sample $\{(\eta_i, \kappa_i)\}_{i=1}^n$ uniformly and independently in $\calP([d]) \times \calK$
    \For{$i = 1, \ldots n$} \Comment{Sample generation via Picard iteration}
    \State $\Tilde{u} \gets \Gamma_{g_{\kappa_i}}(\eta_i)$
    \State Draw $j \sim \text{Unif}([M])$
    \State $x_i \gets (j T / M, \eta_i, \kappa_i)$
    \State $y_i \gets \Tilde{u}_j$
    \EndFor
    \State Initialize neural network parameters $(A^{(0)}, b^{(0)})$
    \For{$j = 1, \ldots, m_{\mathrm{train}}$} \Comment{Train neural network approximator}
    \State Sample mini-batch $\{(x_i, y_i)\}_{i=1}^{n_{\mathrm{mini}}}$ from $\{(x_i, y_i)\}_{i=1}^n$
    \State $(A^{(j)}, b^{(j)}) \gets (A^{(j)}, b^{(j)}) - \gamma_j \nabla_{A, b} \calL_{n_{\mathrm{mini}}}(A, b; \{x_i, y_i\}_{i=1}^{n_{\mathrm{mini}}})$ \Comment{Gradient step}
    \EndFor
    \State \Return $\widehat{\Psi}_n(t, \eta, \kappa) = \phi(t, \eta, \kappa; A^{(m_{\mathrm{train}})}, b^{(m_{\mathrm{train}})})$
\end{algorithmic}
\end{algorithm}

\section{Theoretical Guarantees}
\label{sec:guarantees}

We provide the following {\bf two theoretical guarantees} for our proposed approach:
\begin{enumerate}
    \item[(1)] \textbf{Approximation error (Corollary~\ref{cor:appx-flow-map}):} There exists a ReLU neural network approximating the true flow map $\Phi$ with error $\mathcal{O}(K^{-1/(d + k + 2)})$, width $W = \mathcal{O}(K^{(2(d + k) + 3)/(2(d + k) + 4)})$, and depth $L = \mathcal{O}(\log(d + k + 1))$, all quantified in terms of a bound $K \geq 1$ on the weights of the network, the number of states $d$ of the underlying family of MFGs, and the dimension $k$ of the set that parametrizes the family of MFGs.
    \item[(2)] \textbf{Generalization error (Corollary~\ref{cor:sample-complexity}):} Learning the flow map via empirical risk minimization with $n$ samples yields a neural network approximation with expected excess risk $\mathcal{O}(n^{-1/(d + k + 4)} \log(n))$, up to any error from the optimization process.
\end{enumerate}

These results rely on a preliminary regularity result about the regularity of the flow map $\Phi$ that we establish in Appendix~\ref{sec:technical-proofs}:
\begin{theorem}
\label{thm:flow-time-regularity}
Under Assumptions~\ref{asmp:mfg-uniqueness} and ~\ref{asmp:param-terminal-cost}, the flow map $\Phi : [0, T] \times \calP([d]) \times \calK \to \R^d$, given by $\Phi(t, \eta, \kappa) = u^{t_0, \eta, \kappa}(t, \cdot)$, is jointly Lipschitz in its inputs: there exists $C > 0$ such that
\begin{align*}
    |\Phi(t, \eta_1, \kappa_1) - \Phi(s, \eta_2, \kappa_2)| \leq C(|t - s| + |\eta_1 - \eta_2| + |\kappa_1 - \kappa_2|)
\end{align*}
for all $(t, \eta_1, \kappa_1), (s, \eta_2, \kappa_2) \in [0, T] \times \calP([d]) \times \calK$.
\end{theorem}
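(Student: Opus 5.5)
The plan is to split the estimate with the triangle inequality,
\begin{align*}
|\Phi(t,\eta_1,\kappa_1)-\Phi(s,\eta_2,\kappa_2)| \le |u^{\eta_1,\kappa_1}(t)-u^{\eta_1,\kappa_1}(s)| + \sup_{r\in[0,T]}|u^{\eta_1,\kappa_1}(r)-u^{\eta_2,\kappa_2}(r)|,
\end{align*}
and to treat time regularity and regularity in the data separately.

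\textbf{Time regularity.} I would first establish uniform a priori bounds. Since $g_\kappa$ is continuous on the compact set $[d]\times\calP([d])$ and Lipschitz in $\kappa$ over the compact set $\calK$, we have $\sup_{\kappa}\|g_\kappa\|_\infty<\infty$. Likewise $F$ is bounded, and $H(x,0)=\min_a f(x,a)$ is finite. The HJB equation is a backward ODE whose right-hand side is Lipschitz in $u$ on bounded sets, because $\Delta_x u$ is linear in $u$ and $H$ is $C^2$. The standard comparison principle for finite-state HJB equations (monotonicity in the off-diagonal entries, since $\A\subseteq\R_+$) then gives $\|u^{\eta,\kappa}\|_\infty\le C_0$ uniformly in $(\eta,\kappa)$. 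Consequently $|\tfrac{d}{dt}u|=|\bar H(x,\mu,\Delta_x u)|\le C_1$ uniformly, and the Lipschitz bound in $t$ follows.

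\textbf{Stability in $(\eta,\kappa)$.} This is the main step. Write $u_i,\mu_i$ for the two solutions and set $\delta u=u_1-u_2$, $\delta\mu=\mu_1-\mu_2$. I will use the Lasry--Lions duality argument from the uniqueness proof, made quantitative. Differentiate $\sum_x \delta u(t,x)\,\delta\mu(t,x)$ and integrate over $[0,T]$. Strict concavity of $H$, which is uniform on the bounded range of $\Delta u$ established above, together with the monotonicity of $F$, yields
\begin{align*}
c\int_0^T\sum_{x,y}(\mu_1+\mu_2)(t,y)\,|\gamma^*(y,\Delta_y u_1)-\gamma^*(y,\Delta_y u_2)|^2\,dt \le \Big[\langle \delta u,\delta\mu\rangle\Big]_{0}^{T} + (\text{monotone terms}).
\end{align*}
At $t=T$, write $g_{\kappa_1}(\mu_1(T))-g_{\kappa_2}(\mu_2(T))$ as the sum of $g_{\kappa_1}(\mu_1(T))-g_{\kappa_1}(\mu_2(T))$, which contributes a nonnegative term by monotonicity and is moved to the favorable side, and a remainder bounded by $C|\kappa_1-\kappa_2|\,|\delta\mu(T)|$ using Assumption~\ref{asmp:param-terminal-cost}. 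At $t=0$ the term is bounded by $C|\eta_1-\eta_2|\,\sup|\delta u|$.

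The KFP equation is linear in $\mu$ with bounded rates, so Gronwall gives
\begin{align*}
\sup_t|\delta\mu|\le C\bigl(|\eta_1-\eta_2|+\|\delta\gamma\|_{L^1}\bigr).
\end{align*}
The HJB equation with Lipschitz $\bar H$ and Lipschitz terminal data gives, by backward Gronwall,
\begin{align*}
\sup_t|\delta u|\le C\bigl(|\kappa_1-\kappa_2|+\sup_t|\delta\mu|\bigr).
\end{align*}

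\textbf{Main obstacle: closing the loop.} The energy estimate controls $\delta\gamma$ only in an $L^2$ norm weighted by $\mu_1+\mu_2$, and this weight can vanish near the boundary of the simplex. I would handle this by noting that $\delta\mu$ only sees $\delta\gamma$ through $\mu$-weighted terms. Indeed, the difference of the KFP right-hand sides splits as $\sum_y \delta\mu(y)\gamma_1+\sum_y\mu_2(y)\,\delta\gamma(y)$, so Cauchy--Schwarz against the weighted norm suffices. Writing $D=|\eta_1-\eta_2|+|\kappa_1-\kappa_2|$ and $E^2$ for the weighted energy, the estimates combine into
\begin{align*}
E^2\le C D\,(D+E),
\end{align*}
hence $E\le CD$. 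Substituting back gives $\sup|\delta\mu|+\sup|\delta u|\le CD$.

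Combining this with the time estimate gives the joint Lipschitz bound. The constant depends only on $T$, $d$, the bounds from Assumption~\ref{asmp:mfg-uniqueness}, and $\calK$.
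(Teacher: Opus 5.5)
Your proposal is correct. Its overall structure is the paper's, and the only genuinely different step is the time-regularity estimate.

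\textbf{Stability in $(\eta,\kappa)$.} This part follows the paper (Lemmas~\ref{lem:u-stability-term-condition}--\ref{lem:final-stability}):
\begin{itemize}
\item backward Gronwall for $\delta u$;
\item a $\mu$-weighted forward bound for $\delta\mu$, which the paper cites from Cecchin--Pelino and you derive directly by splitting the KFP right-hand side and applying Cauchy--Schwarz;
\item the Lasry--Lions duality identity, with the monotone part of the terminal term discarded and the $\kappa$-part bounded by $C|\kappa_1-\kappa_2|\,\|\delta\mu\|_\infty$, closed by Young's inequality.
\end{itemize}
Writing the energy with $\delta\gamma$ instead of $\Delta(u_1-u_2)$ is harmless. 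Strong concavity bounds the weighted $|\Delta(u_1-u_2)|^2$, and this controls the weighted $|\delta\gamma|^2$ because $\gamma^*=D_pH$ is Lipschitz on bounded sets. The quantity $\delta\gamma$ is also exactly what the KFP equation sees.

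There is a sign slip in your energy display. Integrating $\tfrac{d}{dt}\langle\delta u,\delta\mu\rangle$ gives $cE^2\le-\bigl[\langle\delta u,\delta\mu\rangle\bigr]_0^T-\int_0^T\langle\delta F,\delta\mu\rangle\,dt$. This is the version your treatment of the $t=T$ term actually uses.

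\textbf{Time regularity.} Here the routes differ.
\begin{itemize}
\item The paper derives $|\Phi(t,\eta,\kappa)-\Phi(s,\eta,\kappa)|\le C|t-s|$ from the $\calC^{1,1}$ regularity of the master field (Lemma~\ref{lem:simplified-flow-regularity} via Proposition~\ref{prop:me_results}). That result is stated for a single terminal cost, so uniformity in $\kappa$ is left implicit.
\item You bound $\|u^{\eta,\kappa}\|_\infty$ uniformly by comparison, using $\sup_{\kappa\in\calK}\|g_\kappa\|_\infty<\infty$ (from Assumption~\ref{asmp:param-terminal-cost} and compactness of $\calK$). You then read off $|\tfrac{d}{dt}u|=|\bar H|\le C_1$.
\end{itemize}
Your route is elementary and makes the uniformity in $(\eta,\kappa)$ explicit. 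It also confines $\Delta u$ to a bounded set, which justifies using constants of $H$ and $D_pH$ only on bounded sets; the appendix's Assumption~\ref{asmp:hamiltonian-hess} is only local in $p$. The paper's route gives more regularity in $(t,\eta)$ than the theorem needs, but it rests on the full master-equation theory.

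\textbf{A hypothesis to state explicitly.} You share this implicit assumption with the paper. The backward-Gronwall step needs the Lipschitz constant of $g_\kappa(x,\cdot)$ in $\mu$ to be uniform over $\kappa\in\calK$. Assumption~\ref{asmp:param-terminal-cost} does not guarantee this by itself.
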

The approach of~\cite{Lanthaler2025}, which relies on \cite[Theorem 1]{Yarotsky2017}, cannot be used in our case (see Rem.~\ref{rem:references-theory}). Instead, we develop an alternative analysis building upon~\cite{Jiao2023}. 

Given a ReLU neural network with weight matrices $\{A_j\}_{j=0}^{L}$ and biases $\{b_j\}_{j=0}^{L-1}$, let 
\begin{align*}
    p(\{A_j\}, \{b_{j}\}) := \|A_L\| \prod_{j=0}^{L - 1} \max\{\|(A_j, b_j)\|, 1\}.
\end{align*}
Then, the set of neural networks with width $W$, depth $L$, and norm bound satisfying $p(\{A_j\}, \{b_j\}) \leq K$ is denoted by $\NN(W, L, K)$. In this class of neural networks, \cite[Theorem 3.2]{Jiao2023} provides the following approximation result. Below, the space of functions $\calC^{0, 1}([0,1]^d)$ refers to the space of Lipschitz continuous functions on $[0,1]^d$.
\begin{proposition}
\label{prop:jiao-approx-bound}
There exists constants $c, C > 0$ such that for any $K \geq 1$, $W \geq c K^{(2d+1) / (2d+2)}$, and $L \geq 2 \lceil \log(d)\rceil + 2$, the worst-case approximation error of the class $\NN(W, L, K)$ for $\Phi \in \calC^{0, 1}([0,1]^d)$ satisfies:
$
    \sup_{\Phi \in \calC^{0, 1}([0,1]^d)} \inf_{\Psi \in \NN(W, L, K)} \|\Phi - \Psi\|_{\calC([0,1]^d)} \leq C K^{-1/(d + 1)}.
$
\end{proposition}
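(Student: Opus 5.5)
The plan is to prove the bound constructively: build an explicit ReLU network that reproduces a piecewise-linear interpolant of $\Phi$ on a uniform grid, and then do careful bookkeeping of the norm functional $p$. After normalising, assume $\Phi$ is $1$-Lipschitz with $\|\Phi\|_\infty\le 1$; the general case follows by scaling the last layer, which changes only $C$. Fix a grid resolution $N\in\N$ and the nodes $m/N$ with $m\in\{0,\dots,N\}^d$. Let $\mathcal I_N\Phi$ be the continuous piecewise-linear interpolant of $\Phi$ on the Kuhn (Freudenthal) triangulation of this grid. Equivalently,
\begin{align*}
\mathcal I_N\Phi(x)=\sum_{m}\Phi(m/N)\,\varphi_m(x),
\end{align*}
where $(\varphi_m)$ is the nodal hat-function partition of unity. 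The 1-Lipschitz property gives $\|\Phi-\mathcal I_N\Phi\|_\infty\le \sqrt d/N$. The target is therefore to realise $\mathcal I_N\Phi$ exactly by a network in $\NN(W,L,K)$ with $K\asymp N^{d+1}$. That choice gives error $\lesssim K^{-1/(d+1)}$.

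Realising $\mathcal I_N\Phi$ by ReLU units proceeds in three steps.
\begin{enumerate}
\item In the first layer, compute only the $O(dN)$ one-dimensional ramps $\sigma(Nx_i-j)$. These are shared by all nodes and have weights of size $O(N)$.
\item Express each nodal function on the Kuhn triangulation as $\varphi_m(x)=\sigma\bigl(\min_{\ell}\lambda_{m,\ell}(x)\bigr)$, a positive part of a minimum of $O(d)$ affine functions of the ramps. Evaluate these minima with the identity $\min(a,b)=a-\sigma(a-b)$, arranged as a binary tree. This uses $\lceil\log_2 d\rceil$ levels, and the identity $a=\sigma(a)-\sigma(-a)$ carries the pass-through terms. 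This accounts for the depth requirement $L\ge 2\lceil\log d\rceil+2$.
\item Take the output layer to be the linear combination with coefficients $\Phi(m/N)$, all bounded by $1$.
\end{enumerate}

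The step I expect to be the main obstacle is the norm and width accounting that produces exactly $W\gtrsim K^{(2d+1)/(2d+2)}$, that is, $W\gtrsim N^{d+1/2}$, together with $p\lesssim N^{d+1}$.

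\begin{itemize}
\item The naive count puts $(N+1)^d$ units in the min-tree layers, each with $O(1)$ weights. Each of those layers then has operator norm of order $N^{d/2}$, and the output layer $A_L$ has $\|A_L\|\lesssim N^{d/2}$.
\item The output layer's contribution can be improved to $O(1)$ times the number of overlapping supports, because at any $x$ only $O(d!)$ of the $\varphi_m$ are nonzero.
\item Where the raw product of layer norms still exceeds the target, use the positive homogeneity of ReLU, $\sigma(cz)=c\,\sigma(z)$ for $c>0$. Rescaling one layer by $c$ and the next by $1/c$ leaves the network unchanged, so the factor $N$ from the first layer and the $N^{d/2}$-type factors can be redistributed so that each $\max\{\|(A_j,b_j)\|,1\}$ is balanced.
\item To reach the exponent $d+\tfrac12$ in the width rather than $d$, I would split the grid into blocks and compute block indicators first. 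The point of doing this is that the Frobenius-type norm of each layer grows only like $\sqrt{\text{width}}$, which turns the count of $(N+1)^d$ units into a factor $N^{1/2}$ in the final product.
\end{itemize}

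If the exact exponents do not come out of this construction, I would fall back on the $\ell^\infty$-type localisation used by Jiao et al. The estimate then closes as follows. Choose $N\asymp K^{1/(d+1)}$. The construction has width $O(N^{d+1/2})\le cK^{(2d+1)/(2d+2)}$, depth $2\lceil\log d\rceil+2$, and $p\le K$, so $\sup_\Phi\inf_\Psi\|\Phi-\Psi\|_\infty\le \sqrt d/N\le CK^{-1/(d+1)}$. Finally, taking $W$ and $L$ larger than these values cannot increase the infimum, since additional units can be padded with zero weights without changing $p$.
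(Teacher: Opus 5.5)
\textbf{Gap: the norm bookkeeping is never established.} The paper gives no proof of this statement; it imports it from \cite[Theorem 3.2]{Jiao2023}. Your route can be made to work: realize the Kuhn-triangulation interpolant on an $N$-grid exactly, take $N\asymp K^{1/(d+1)}$, then pad. But the step you yourself flag as the main obstacle, showing $p\lesssim N^{d+1}$, is never established, and the arguments you give for it are incorrect.

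You never fix which matrix norm enters $p$, and your claims do not fit any single choice of norm.
\begin{itemize}
\item The deeper min-tree layers act block-diagonally across grid points, so their operator norm (spectral or $\ell^\infty$) is $O(1)$, not $N^{d/2}$. Only a Frobenius norm grows like $\sqrt{\text{width}}$. With a Frobenius norm your construction genuinely does not give $p\lesssim N^{d+1}$, since every block-diagonal layer would cost $\asymp N^{d/2}$.
\item Your proposed repairs do not address this. That only $O(d!)$ of the $\varphi_m$ are nonzero at each $x$ says nothing about $\|A_L\|$, which depends only on the coefficient vector $(\Phi(m/N))_m$.
\item Positive homogeneity only moves factors between adjacent layers. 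It gives no mechanism for cancelling an excess power of $N$.
\item The block-indicator scheme is not specified.
\end{itemize}
Your closing paragraph then simply asserts $p\le K$. Falling back on the construction of \cite{Jiao2023} amounts to citing the result, which is all the paper does.

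\textbf{The missing ingredient is the norm itself.} In \cite{Jiao2023}, whose class $\NN(W,L,K)$ the paper adopts, $\|\cdot\|$ is the $\ell^\infty\to\ell^\infty$ operator norm, i.e.\ the largest absolute row sum of $(A_j,b_j)$. A layer's norm is then a maximum over its rows and does not grow with the number of units, so no balancing is needed.
\begin{itemize}
\item The first hidden layer evaluates the affine pieces, with coefficients $\pm N$ and biases $O(N)$, so its norm is $O(N)$.
\item Each min-tree layer has rows with $O(1)$ entries equal to $\pm1$, so its norm is $O(1)$.
\item The output row has $\|A_L\|=\sum_m|\Phi(m/N)|\le (N+1)^d$. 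This, not anything to be optimized away, is where the factor $N^d$ in $K\asymp N^{d+1}$ comes from.
\end{itemize}
Hence $p\le C_dN^{d+1}$ with width $C_dN^d\le cK^{(2d+1)/(2d+2)}$. The exponent $(2d+1)/(2d+2)$ is a hypothesis inherited from \cite{Jiao2023}, not something your network must attain. Zero units pad to any larger $W$, and identity layers (acting on the nonnegative $\varphi_m$, with norm $1$) pad to any larger $L$.

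\textbf{Smaller corrections.}
\begin{itemize}
\item The Kuhn hat function is $\sigma$ of a minimum of $d(d+1)$ affine functions of $x$, not $O(d)$. The min-tree therefore needs $\lceil\log_2(d(d+1))\rceil+1\le 2\lceil\log_2 d\rceil+2$ hidden layers, and this is what actually matches the depth hypothesis.
\item The supremum must be read over the unit ball of $\calC^{0,1}([0,1]^d)$, as in \cite{Jiao2023}. Over all Lipschitz functions it is infinite, and your rescaling makes $C$ depend on the Lipschitz and sup bounds.
\item The zero network covers those $K$ too small to afford $N=1$.
\end{itemize}
With these changes your construction proves the statement directly, by a more elementary argument than the paper's appeal to \cite{Jiao2023}, and even with smaller width.
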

More concisely, over the class $\calC^{0, 1}([0,1]^d)$ of Lipschitz functions, the worst-case approximation error with a sufficiently wide and deep ReLU neural network can be quantified precisely in terms of a bound on the weights of the approximating networks. Using Theorem~\ref{thm:flow-time-regularity}, we obtain the following as a corollary, with proof in Appendix~\ref{sec:technical-proofs}:
\begin{corollary}
\label{cor:appx-flow-map}
Assume that Assumptions~\ref{asmp:mfg-uniqueness} and~\ref{asmp:param-terminal-cost} hold. Then, for any $K \geq 1$ and $\varepsilon > 0$, there exists a neural network $\Psi \in \NN(W, L, K)$ with weight bound $K$, width $W \geq c(\mathrm{diam}(\calK), T) d K^{(2(d + k) + 3)/(2(d + k) + 4)}$, and depth $L \geq 2 \lceil \log(d + k + 1)\rceil + 2$ such that
\begin{align*}
    \|\Phi - \Psi\|_{\calC([0, T] \times \calP([d]) \times \calK)} \leq C(\mathrm{diam}(\calK), T) K^{-1/(d + k + 2)} + \varepsilon,
\end{align*}
where $\Phi : [0, T] \times \calP([d]) \times \calK \to \R^d$ is the flow map from Definition~\ref{def:flow-map}.
\end{corollary}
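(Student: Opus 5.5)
The corollary should follow from Theorem~\ref{thm:flow-time-regularity} and Proposition~\ref{prop:jiao-approx-bound} through a rescaling of the domain to a unit cube, a Lipschitz extension, and componentwise approximation of the $d$ outputs. The steps are as follows.

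\textbf{Step 1: embed the domain in a cube.} Identify $\calP([d])$ with the simplex in $\R^d$, which lies in $[0,1]^d$. Since $\calK$ is compact, it lies in a cube $[a,a+D]^k$ with $D\le \mathrm{diam}(\calK)$ (up to a constant). Define the affine bijection
\[
\tau(t,\eta,\kappa)=\bigl(t/T,\ \eta,\ (\kappa-a)/D\bigr),
\]
which maps $[0,T]\times\calP([d])\times\calK$ into $[0,1]^{d+k+1}$.

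\textbf{Step 2: extend to a Lipschitz function on the cube.} By Theorem~\ref{thm:flow-time-regularity}, each component $\Phi_x\circ\tau^{-1}$ is Lipschitz on the image of $\tau$. Its Lipschitz constant is of order $C\max(T,D,1)$. Extend each component to all of $[0,1]^{d+k+1}$ by the McShane extension
\[
\bar\Phi_x(z)=\inf_{w}\bigl\{\Phi_x(\tau^{-1}w)+L_\Phi|z-w|\bigr\},
\]
which keeps the same Lipschitz constant. Then divide by $L_\Phi$ and subtract a constant (or bound the sup norm by $\|\Phi\|_\infty$, which is finite by compactness and continuity). This puts each normalized component in the unit ball of $\calC^{0,1}([0,1]^{d+k+1})$, the class to which Proposition~\ref{prop:jiao-approx-bound} applies.

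\textbf{Step 3: approximate each component.} Apply Proposition~\ref{prop:jiao-approx-bound} in dimension $d'=d+k+1$. Since $(2d'+1)/(2d'+2)=(2(d+k)+3)/(2(d+k)+4)$ and $K^{-1/(d'+1)}=K^{-1/(d+k+2)}$, this yields for each $x\in[d]$ a network $\psi_x\in\NN(W_0,L,K)$ with
\[
W_0\ge cK^{(2(d+k)+3)/(2(d+k)+4)},\qquad L\ge 2\lceil\log(d+k+1)\rceil+2,
\]
and sup error $CK^{-1/(d+k+2)}$.

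\textbf{Step 4: assemble the vector-valued network.} Stack the $d$ component networks in parallel (block-diagonal weights), which gives width $dW_0$; this is the source of the factor $d$ in the width requirement. Precompose with the affine map $\tau$, absorbing it into the first layer. Postcompose with the rescaling by $L_\Phi$ and the added constant, absorbing these into $A_L$ and a bias.

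\textbf{Main obstacle: the weight-norm budget.} The quantity $p(\{A_j\},\{b_j\})$ must remain bounded by $K$ after these operations, and three effects threaten this:
\begin{itemize}
\item Absorbing $\tau$ changes the first-layer norm by a factor depending on $T$ and $\mathrm{diam}(\calK)$.
\item Stacking may inflate operator norms.
\item The output scaling multiplies $\|A_L\|$ by $L_\Phi$.
\end{itemize}
I would handle this by applying Proposition~\ref{prop:jiao-approx-bound} with a smaller budget $K'=K/c(\mathrm{diam}(\calK),T,d)$. The error then changes only by a constant factor, $K'^{-1/(d+k+2)}=c'K^{-1/(d+k+2)}$, which is absorbed into $C(\mathrm{diam}(\calK),T)$. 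The slack $\varepsilon$ is reserved to absorb non-sharp steps, such as rounding constant offsets or a bias not fitting the norm convention. Block-diagonal stacking keeps the operator norm of each layer equal to the maximum over blocks, so the only genuine growth comes from the affine pre- and post-maps. Those are constants depending on $T$ and $\mathrm{diam}(\calK)$, consistent with the stated form of the bound.
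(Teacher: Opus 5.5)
Your proposal is correct and follows the same route as the paper's proof. It uses Theorem~\ref{thm:flow-time-regularity} for Lipschitz regularity, rescales $[0,T]\times\calP([d])\times\calK$ into the unit cube of dimension $d+k+1$, applies Proposition~\ref{prop:jiao-approx-bound} to each coordinate, and stacks the $d$ coordinate networks to get the factor $d$ in the width. You are in fact more careful than the paper (Lipschitz extension off the embedded domain, normalization into the unit Lipschitz ball, and running the proposition with budget $K/c$), but two small repairs are needed. First, per-layer block maxima bound the product $p$ only after you rebalance each ReLU network so its hidden layers have norm at most $1$ and all the weight sits in $A_L$, which positive homogeneity permits. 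Second, avoid the output offset by dividing by $\|\Phi\|_\infty + L_\Phi\sqrt{d+k+1}$, since the architecture has no output bias and $\varepsilon$ cannot absorb an $O(1)$ constant.
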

We note that the results in~\cite{Jiao2023} are presented in the setting of scalar-valued functions. In Appendix~\ref{sec:technical-proofs}, we show how we extend to the vector-valued setting that we require for our particular flow map.
Such a result is particularly useful because of the generalization guarantees that arise from Rademacher complexity estimates for families of neural networks with bounded weights. For instance, \cite[Theorem 4.1]{Jiao2023} provides such a guarantee, in the context of regression, while \cite[Corollary 4.2]{Jiao2023} provides an analogous guarantee for noiseless regression problems with regularization.

Suppose that we have $n$ samples $\{(x_i, y_i)\}_{i = 1}^n$ such that $x_i \overset{\text{i.i.d.}}{\sim} \rho$, a distribution supported on $[0,1]^d$, and
$
    y_i = \Phi(x_i)$ with $i = 1, \ldots, n,
$
where $\Phi : [0,1]^d \to \R$ belongs to $\calC^{0, 1}([0,1]^d)$ (i.e., it is Lipschitz continuous). Then, given fixed widths, depths, and weight bounds $W, L, K > 0$, the empirical risk is given exactly as in Equation~\eqref{eq:emp-loss}, and
and the empirical risk minimizer is
\begin{align}
    \label{eqn:erm}
    \Psi_n := \argmin_{\Psi \in \NN(W, L, K)} \calL_n(\Psi; \{(x_i, y_i)\}_{i = 1}^n).
\end{align}
We take as our convex loss $\ell(x, y) = \|x - y\|_2^2$ for simplicity, as in~\citep{Jiao2023}. Note that, for each $\Psi \in \NN(W, L, K)$, this quantity provides an unbiased estimate of the population risk $\calL(\Psi) := \E_{x_i \sim \rho}[\ell(x_i, \Psi(x_i))]$. Now, suppose that we have computed the empirical risk minimizer in~\eqref{eqn:erm}, up an optimization error $\varepsilon_{\mathrm{opt}} > 0$, via stochastic gradient descent, yielding a neural network $\widehat{\Psi}_n$ that satisfies
\begin{align}
\label{eqn:trained-erm}
    \calL_n(\widehat{\Psi}_n) \leq \inf_{\Psi \in \NN(W, L, K)} \calL_n(\Psi) + \varepsilon_{\mathrm{opt}}.
\end{align}
Then, we aim to quantify the excess risk, defined as $\|\widehat{\Psi}_n - \Phi\|_{L^2(\rho)}^2 := \calL(\widehat{\Psi}_n) - \calL(\Phi)$.
A standard computation then shows that the expected excess risk, with expectation taken over the samples $\{x_i\}_{i=1}^n$, is given by
\begin{align*}
     \E[\|\widehat{\Psi}_n - \Phi\|_{L^2(\rho)}^2] \leq \inf_{\Psi \in \NN(W, L, K)} \|\Psi - \Phi\|_{L^2(\rho)}^2 + \E[\calL(\widehat{\Psi}_n) - \calL_n(\widehat{\Psi}_n)] + \varepsilon_{\mathrm{opt}}.
\end{align*}
To quantify the expected excess risk, it suffices to quantify the approximation error and the generalization error, the first and second terms in the above bound respectively. \cite[Theorem 4.1]{Jiao2023} combines Proposition~\ref{prop:jiao-approx-bound} and a symmetrization argument to show the following:
\begin{proposition}
\label{prop:jiao-generalization-unregularized}
If $\Phi \in \calC^{0, 1}([0,1]^d)$, then there exists $\Tilde{C} > 0$ such that for $K = \calO(n^{(d + 1) / (2d + 6)})$, $W \geq \Tilde{C} K^{(2d + 1) / (2d + 2)}$, $L \geq 2 \lceil \log(d)\rceil + 3$, any neural network $\widehat{\Psi}_n \in \NN(W, L, K)$ satisfying~\eqref{eqn:trained-erm} also satisfies: 
$
    \E[\|\widehat{\Psi}_n - \Phi\|_{L^2(\rho)}^2] - \varepsilon_{\mathrm{opt}} \leq \Tilde{C} n^{-1 /(d+3)}  \log(n).
$
\end{proposition}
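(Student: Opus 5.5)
The statement is the one quoted from \cite[Theorem 4.1]{Jiao2023}. I would prove it through the excess-risk decomposition displayed just before it, treating its two terms separately and then balancing them by the choice of $K$.

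\textbf{Approximation term.} Since $\Phi \in \calC^{0,1}([0,1]^d)$, Proposition~\ref{prop:jiao-approx-bound} applies once $W \geq cK^{(2d+1)/(2d+2)}$ and $L \geq 2\lceil\log d\rceil + 2$. It gives some $\Psi^\ast \in \NN(W,L,K)$ with $\|\Psi^\ast - \Phi\|_{\calC([0,1]^d)} \leq CK^{-1/(d+1)}$. Because $\rho$ is a probability measure, the $L^2(\rho)$ norm is bounded by the sup norm, so
\[
\inf_{\Psi \in \NN(W,L,K)} \|\Psi - \Phi\|^2_{L^2(\rho)} \leq C^2 K^{-2/(d+1)}.
\]

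\textbf{Generalization term.} I would bound $\E[\calL(\widehat\Psi_n) - \calL_n(\widehat\Psi_n)]$ by $\E \sup_{\Psi} (\calL(\Psi) - \calL_n(\Psi))$. Symmetrization bounds this by $2\E\,\mathfrak{R}_n(\ell\circ\NN)$, the Rademacher complexity of the loss class.
\begin{itemize}
\item Every $\Psi \in \NN(W,L,K)$ satisfies $|\Psi(x)| \lesssim K$ on $[0,1]^d$, and $\Phi$ is bounded. Hence the squared loss is $\calO(K)$-Lipschitz in its first argument on the relevant range, and Talagrand's contraction lemma gives $\mathfrak{R}_n(\ell\circ\NN) \lesssim K\,\mathfrak{R}_n(\NN)$.
\item For networks controlled through $p(\{A_j\},\{b_j\}) \leq K$, I would use the norm-based (Golowich--Rakhlin--Shamir type) bound $\mathfrak{R}_n(\NN) \lesssim K\sqrt{L}/\sqrt{n}$ up to logarithms. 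Augmenting inputs with a constant coordinate lets the biases be absorbed into the $\max\{\|(A_j,b_j)\|,1\}$ factors.
\end{itemize}
The generalization term is then $\lesssim K^2\sqrt{L/n}\,\log n$.

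\textbf{Choice of $K$.} The total bound is $K^{-2/(d+1)} + K^2 n^{-1/2}\log n + \varepsilon_{\mathrm{opt}}$. Taking $K \asymp n^{(d+1)/(2d+6)}$, the first term becomes $n^{-1/(d+3)}$. For the second, the exponent of $n$ is
\[
\tfrac{d+1}{d+3} - \tfrac12 = \tfrac{d-1}{2(d+3)},
\]
which does not match. So the crude bound $K^2/\sqrt n$ is too lossy, and this is where I expect the main obstacle. A generalization estimate of order $K/\sqrt{n}$ would not balance at this $K$ either, since $K^{-2/(d+1)} = K n^{-1/2}$ gives $K = n^{(d+1)/(2d+6)}$ and a rate of $n^{-1/(d+3)}$ only if the generalization term is $K n^{-1/2}$ times something that evaluates to 1 here; I would therefore just recompute the balancing carefully. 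The cleaner route is Jiao et al.'s own: exploit noiselessness ($y_i = \Phi(x_i)$) through a localized or offset-Rademacher argument, which yields a variance-type generalization term of order $K^2/n$ up to logarithms. Balancing $K^{-2/(d+1)}$ against $K^2/n$ gives $K^{2(d+2)/(d+1)} = n$, which again differs from the stated $K$.

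So my first attempt would be the plain Rademacher bound. I would then tune which power of $K$ the Lipschitz constant of the loss contributes, since the truncation level $B$ of the outputs can be taken as a constant bound on $\|\Phi\|_\infty$ rather than $K$. Clipping $\Psi$ at level $B$ keeps it in the class up to one extra ReLU layer, which is why the depth requirement rises to $2\lceil\log d\rceil + 3$. With clipping, the generalization term becomes $K\sqrt{L}\,n^{-1/2}\log n$. Balancing $K^{-2/(d+1)} = K n^{-1/2}$ gives $K = n^{(d+1)/(2(d+3))}$ and total error $n^{-1/(d+3)}\log n$, exactly as claimed.

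Thus the key steps are: (i) clip the network outputs at a constant so that the loss is $\calO(1)$-Lipschitz; (ii) apply the norm-based Rademacher bound of order $K/\sqrt n$; (iii) balance against Proposition~\ref{prop:jiao-approx-bound}. The most delicate part is step (ii): proving a width-independent Rademacher bound for the product-norm class $p(\cdot)\leq K$ that includes biases, with only $\sqrt{L}$ or logarithmic depth dependence.
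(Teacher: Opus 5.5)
Your skeleton is the one the paper relies on through \cite[Theorem 4.1]{Jiao2023}: approximation via Proposition~\ref{prop:jiao-approx-bound}, then symmetrization with a norm-based Rademacher bound $\mathfrak{R}_n(\NN(W,L,K))\lesssim K\sqrt{L/n}$, then balancing. The stated exponents come exactly from balancing $K^{-2/(d+1)}$ against $K/\sqrt{n}$, which gives $K\asymp n^{(d+1)/(2d+6)}$ and the rate $n^{-1/(d+3)}$. So your remark that a $K/\sqrt n$ term ``would not balance'' is wrong, and the localized $K^2/n$ route is not what the exponents reflect.

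\textbf{The gap is step (i).} The statement concerns an approximate empirical risk minimizer $\widehat\Psi_n$ over the unclipped class $\NN(W,L,K)$. Its members take values of order $K$ on $[0,1]^d$, since $|\Psi(x)|\le K\|(x,1)\|$. Clipping replaces the estimator by a different function, so your argument bounds $\E\|T_B\widehat\Psi_n-\Phi\|^2_{L^2(\rho)}$, not $\E\|\widehat\Psi_n-\Phi\|^2_{L^2(\rho)}$.

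The observation that clipped networks fit in a class with one more layer is the wrong inclusion. It shows that $T_B\circ\NN(W,L,K)$ sits inside a slightly larger network class. It does not show that the minimizer over $\NN(W,L,K)$ is clipped or bounded by $B$. The deviation term $\E[\calL(\widehat\Psi_n)-\calL_n(\widehat\Psi_n)]$ therefore still has to be controlled over unbounded members of the class. On that range the squared loss is only $\mathcal{O}(K)$-Lipschitz, which puts you back at your own $K^2/\sqrt n$ computation: that gives $K\asymp n^{(d+1)/(4d+8)}$ and rate $n^{-1/(2d+4)}$, not the claimed one. Nothing in your argument controls $\calL(\widehat\Psi_n)-\calL(T_B\widehat\Psi_n)$, that is, how far the unclipped network exceeds level $B$ away from the sample points.

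\textbf{What your tools do prove} is the bound for the clipped output, or for a $B$-bounded hypothesis class, and you should state the result that way.

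For the clipped output $T_B\widehat\Psi_n$ with $B\ge\|\Phi\|_\infty$, the argument needs no extra layer. Pointwise $|T_Bt-\Phi|\le|t-\Phi|$, so $\calL_n(T_B\widehat\Psi_n)\le\calL_n(\widehat\Psi_n)\le\calL_n(\Psi^\ast)+\varepsilon_{\mathrm{opt}}$ for the approximant $\Psi^\ast$. Moreover $\E\,\calL_n(\Psi^\ast)=\calL(\Psi^\ast)\le C^2K^{-2/(d+1)}$. Finally, contraction through the $4B$-Lipschitz maps $t\mapsto(T_Bt-\Phi(x_i))^2$ bounds $\E\sup_{\Psi}[\calL(T_B\Psi)-\calL_n(T_B\Psi)]$ by a multiple of $BK\sqrt{L/n}$.

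Alternatively, take the hypothesis class to be $B$-bounded, for example clipped, networks and clip the approximant as well. This costs nothing in sup norm and is a natural source of the extra layer in $L\ge2\lceil\log d\rceil+3$.

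Obtaining the bound for the unclipped minimizer over $\NN(W,L,K)$, as the statement literally asserts, would need an idea your proposal does not supply.
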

In general, it is difficult to quantify the optimization error $\varepsilon_{\mathrm{opt}}$. However, with sufficient hyperparameter tuning to stabilize training, we can safely assume that $\varepsilon_{\mathrm{opt}}$ is small. To conclude, Proposition~\ref{prop:jiao-generalization-unregularized} applies nearly \textit{verbatim} in our setting, up to a rescaling argument found in Appendix~\ref{sec:technical-proofs}:
\begin{corollary}
\label{cor:sample-complexity}
If $K = \mathcal{O}(n^{(d + k + 2) / (2(d + k) + 8})$, then under the assumptions of Corollary~\ref{cor:appx-flow-map}, minimizing the empirical loss in Equation~\eqref{eq:emp-loss} over $n$ samples (generated via Algorithm~\ref{alg:mfg-sampling}) yields a neural network $\widehat{\Psi}_n$ that satisfies, up to an optimization error $\varepsilon_{\mathrm{opt}} > 0$,
$
    \E[\|\widehat{\Psi}_n - \Phi\|_{L^2(\rho)}^2] - \varepsilon_{\mathrm{opt}} \leq \Tilde{C}(\mathrm{diam}(\calK), T) n^{-1/(d + k + 4)} \log(n).
$
Above, $\rho$ is the uniform distribution over $[0, T] \times \calP([d]) \times \calK$.
\end{corollary}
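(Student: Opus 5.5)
The plan is to reduce Corollary~\ref{cor:sample-complexity} to Proposition~\ref{prop:jiao-generalization-unregularized}. This uses Theorem~\ref{thm:flow-time-regularity}, an affine rescaling of the input domain, and a componentwise treatment of the vector-valued output.

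\textbf{Step 1: embed the domain in a unit cube.} The domain is $[0,T]\times\calP([d])\times\calK$. View $\calP([d])\subset[0,1]^d$, and place $\calK$ in a box of side $\mathrm{diam}(\calK)$. This gives an affine bijection $\tau$ from a box containing the domain onto $[0,1]^{D}$, with $D=1+d+k$. The map $\tau$ and its inverse have Lipschitz constants controlled by $\max\{T,\mathrm{diam}(\calK),1\}$. By Theorem~\ref{thm:flow-time-regularity}, $\Phi\circ\tau^{-1}$ is Lipschitz on $\tau(\text{domain})$. We extend each coordinate to all of $[0,1]^{D}$ by a McShane extension, which preserves the Lipschitz constant. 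After dividing by that constant (or noting that the proposition's constants depend on it), each coordinate $\Phi_x\circ\tau^{-1}$ lies in $\calC^{0,1}([0,1]^D)$.

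The pushforward $\tau_\#\rho$ is supported in $[0,1]^D$, and $L^2$ norms change only by the Jacobian constant of $\tau$. Precomposing a network with the affine map $\tau$ can be absorbed into the first layer $(A_0,b_0)$. This inflates the norm bound $p$ by at most a factor $c(\mathrm{diam}(\calK),T)$, which is absorbed into the constant $\Tilde C(\mathrm{diam}(\calK),T)$.

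\textbf{Step 2: handle the $d$ outputs.} The squared loss splits as $\|\widehat\Psi_n-\Phi\|^2_{L^2(\rho)}=\sum_{x\in[d]}\|\widehat\Psi_{n,x}-\Phi_x\|^2_{L^2(\rho)}$. The error decomposition stated above separates into an approximation term and a generalization term.

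For the approximation term, we stack $d$ scalar approximants from Proposition~\ref{prop:jiao-approx-bound} in parallel. This gives width $dW$ and the same depth. The norm bound grows by a factor polynomial in $d$, which goes into the constant, matching Corollary~\ref{cor:appx-flow-map}.

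For the generalization term, we apply the Rademacher/symmetrization bound of \cite[Theorem 4.1]{Jiao2023} to each output coordinate. The loss is a sum of $d$ scalar squared losses, so contraction applies coordinate-wise and costs a factor $d$.

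\textbf{Step 3: plug in the dimension.} Substitute $D=d+k+1$ in Proposition~\ref{prop:jiao-generalization-unregularized}. The choice $K=\calO(n^{(D+1)/(2D+6)})=\calO(n^{(d+k+2)/(2(d+k)+8)})$ exactly matches the hypothesis. The resulting rate $n^{-1/(D+3)}\log n=n^{-1/(d+k+4)}\log n$ is the claimed one, and the width and depth requirements translate as in Corollary~\ref{cor:appx-flow-map}.

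\textbf{Main obstacle.} I expect the hard part to be the samples. Algorithm~\ref{alg:mfg-sampling} produces $y_i=\tilde u_j$ from Picard iteration on a time grid, not $\Phi(x_i)$ with $x_i$ uniform. I would treat the data as exact, i.e.\ Picard converged and $y_i=\Phi(x_i)$, as the corollary's phrasing ``up to an optimization error'' suggests.

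I would also take $\rho$ to be the sampling law. For the uniform $\rho$ on the full domain, sampling on the grid $\{jT/M\}$ differs from uniform sampling in time. I would close this gap using the time-Lipschitz bound of Theorem~\ref{thm:flow-time-regularity}, which gives an additive $\calO(1/M)$ error absorbable for $M$ large. I will state this as the assumption underlying the corollary.
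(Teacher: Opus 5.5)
Your proposal is correct and is essentially the paper's argument: the paper's proof repeats the rescaling from the proof of Corollary~\ref{cor:appx-flow-map} and then substitutes $d+k+1$ for $d$ in Proposition~\ref{prop:jiao-generalization-unregularized}, with constants depending on $\mathrm{diam}(\calK)$ and $T$; that rescaling embeds $[0,T]\times\calP([d])\times\calK$ into $[0,1]^{d+k+1}$ and stacks coordinate networks for the $\R^d$-valued output, and your McShane extension, first-layer absorption of the affine map, and coordinatewise Rademacher bound only make explicit what the paper leaves implicit. The paper simply postulates exact labels $y_i=\Phi(x_i)$ with $x_i\sim\rho$ and never addresses the time grid, and if you do want to bridge grid sampling and uniform $\rho$, the time-Lipschitz bound on $\Phi$ alone is not enough: you also need the Lipschitz constant of $\widehat\Psi_n$, which is only bounded by a multiple of $K$, so the discrepancy between the two $L^2$ norms is of order $K/M$ rather than $\calO(1/M)$, and with the prescribed growth of $K$ this forces $M$ to grow polynomially in $n$.
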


\section{Numerical Experiments}
\label{sec:num-exps}
In this section, we provide numerical evidence for the accuracy and generalization of our method on two standard examples of finite-state MFGs. First, we demonstrate our scheme's accuracy on a simple cybersecurity model in dimension $d = 4$. Then, we consider high-dimensional quadratic MFGs, illustrating that our approach maintains its accuracy as the dimension of the underlying family of MFGs increases. Full experimental details are in Appendix~\ref{sec:exp-details}.

\noindent\textbf{Example 1: Low-Dimensional Cybersecurity Model. }
We begin with a cybersecurity model introduced by~\cite{MR3575619} and studied in~\citep[Section 7.4]{coh-lau-zel2025}. Players can either protect or defend their computers against infection by malware. Before passing to the mean-field limit, each player can either be infected by a hacker or by interacting with another infected player. The player is either defended or undefended ($D$ or $U$) and susceptible or infected ($S$ or $I$), leading to a state space with $d = 4$ states: $\{DS, DI, US, UI\}$. The player determines whether to defend or not with a switching parameter $\rho > 0$, and the player pays cost $k_D > 0$ for defending and $k_I > 0$ if they are infected. The running cost is
$
    f(x, a) = k_{D} \mathds{1}_{\{DS, DI\}}(x) + k_I \mathds{1}_{\{DI, UI\}}(x),
$
and $F(x, \eta) \equiv 0$. The player's control is simply $a \in \{0, 1\}$, and this yields a transition matrix exactly as in~\cite[Section 7.4]{coh-lau-zel2025}. Importantly, we modify the original example by including a terminal cost, penalizing infected players at the terminal time $T$ according to a parameter $\kappa \ge 0$:
$
    g_\kappa(x, \eta) = \kappa \mathds{1}_{\{DI, UI\}}(x).
$

We use Algorithm~\ref{alg:mfg-sampling} with $n=2000$ samples, $m_{\mathrm{train}} = 2000$ epochs with batches of size $m_{\mathrm{mini}}=64$. After training the neural network, we evaluate it on several pairs $(\eta,\kappa)$ to obtain $\widehat{u}$ and compare with the solution obtained by solving the ODE system with this pair of initial and terminal conditions. Fig.~\ref{fig:cs-large-cost-time} shows that our method performs well on random samples with $\kappa \in [0, 10]$ and arbitrary $\eta \in \calP([4])$. Additionally, having learned the value function $u$, we can easily recover the flow of measures $\mu$ by simply solving the KFP equation with the learned value function. The results of such experiments are displayed in Figure~\ref{fig:cs-large-cost-mus}, showing that our method also allows for the accurate recovery of the flow of measures at the MFG equilibrium. Appendix~\ref{sec:additional-exps} contains more experiments with this model, including an illustration of the case that $\kappa = 0$ (i.e., the setting considered in~\cite{coh-lau-zel2025}).

\begin{figure}[h]
    \centering
    \includegraphics[width=\linewidth]{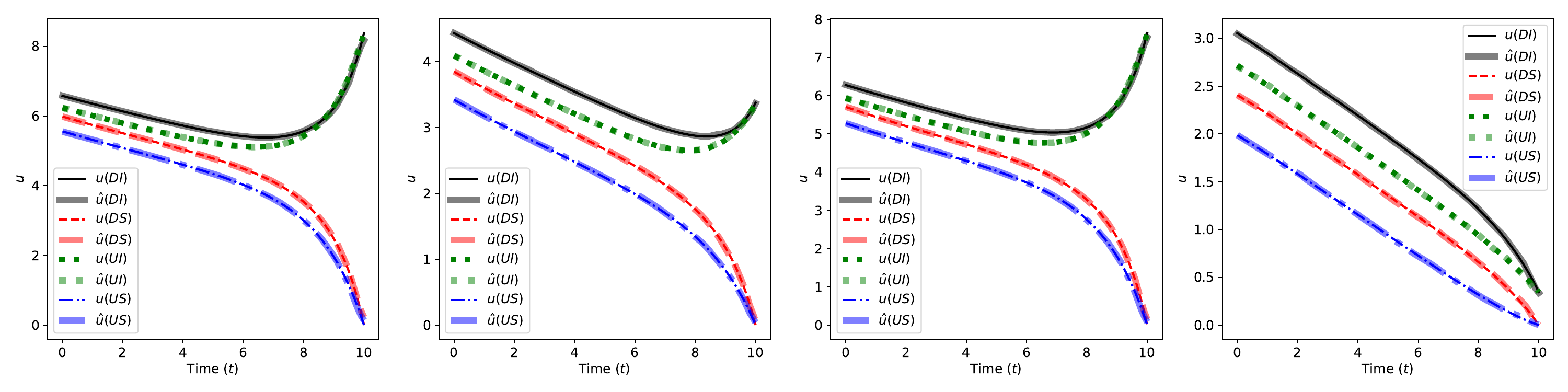}
    \caption{Learned value function $\widehat{u}$ and true value function $u$ for four random initial distributions $\eta$ and final cost parameter $\kappa \in [0, 10]$, both drawn uniformly at random from $\calP([4])$ and the interval $[0, 10]$, respectively. Each curve corresponds to one state in $\{DS, DI, US, UI\}$.}
    \label{fig:cs-large-cost-time}
\end{figure}

\begin{figure}[h]
    \centering
    \includegraphics[width=\linewidth]{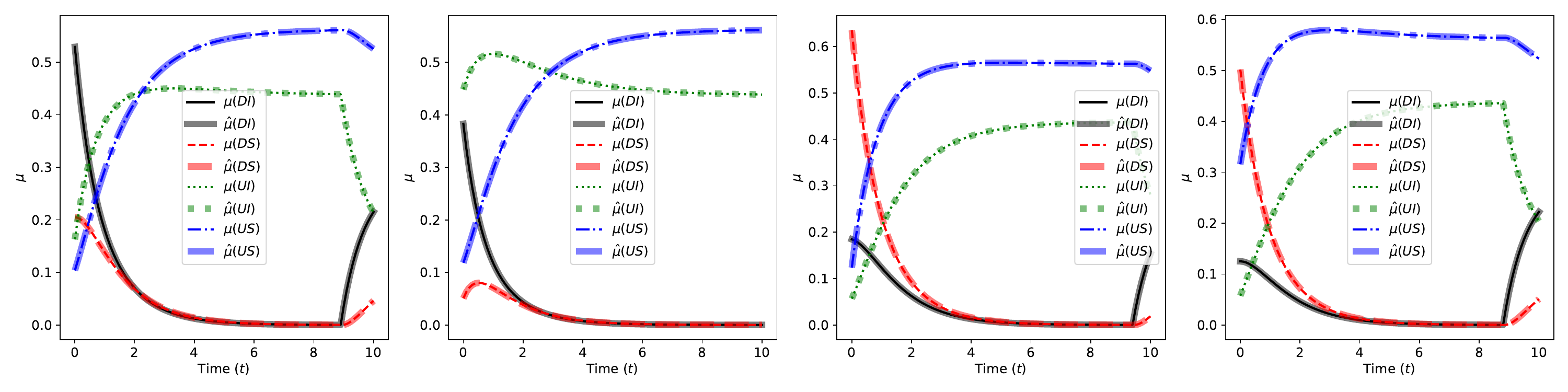}
    \caption{Learned flow of measures $\widehat{\mu}$ and true flow of measures $\mu$ for four random initial distributions $\eta$ and final cost parameter $\kappa \in [0, 10]$, both drawn uniformly at random from $\calP([4])$ and the interval $[0, 10]$, respectively.}
    \label{fig:cs-large-cost-mus}
\end{figure}

\noindent\textbf{Example 2: High-Dimensional Quadratic Models.}
We also consider the quadratic cost model, as in \cite[Section 7.1]{coh-lau-zel2025}, also analyzed in \cite[Example 1] {cec-pel2019} and \cite[Example 3.1]{bay-coh2019} via the master equation. This setting allows us to test our method on high-dimensional MFGs, and the assumptions that we impose on the parametrized family of terminal costs remain easily verifiable. We take a quadratic running cost and a linear mean-field cost, given by
$f(x, a) := b \sum_{y \neq x} (a_y - 2)^2,$ $F(x, \eta) := \eta_x,$
with action space $\A := [1, 3]$ and $b = 4$. As shown in \cite[Section 7.1]{coh-lau-zel2025}, letting $T = 1$ will ensure that the resulting Hamiltonian satisfies our assumptions. Therein, the authors take $g(x, \eta) \equiv 0$, but we convert their quadratic model into a parametrized family of MFGs by taking instead $\kappa \in [0,1]^d =: \calK$ and considering terminal costs of the form 
$
    g_\kappa(x, \eta) = \kappa_x + \eta_x.
$
The inclusion of $\kappa$ in the terminal cost has the effect of pushing the player away from states $x \in [d]$ such that $\kappa_x$ is large and towards states with small $\kappa_x$, with the mean-field term $\eta_x$ discouraging crowding. As our numerical results demonstrate, the value function depends heavily on the parameter $\kappa$, making this a challenging task, especially as $d$ increases.

In Fig.~\ref{fig:examples-quad-model}, we demonstrate the success of our method in learning the flow map for this family of MFGs in dimensions $d = 3$, $d = 4$, $d = 5$, and $d = 10$ respectively. Additionally, Fig.~\ref{fig:errors-quad-model} displays the errors, measured as the difference between the true and approximate value function at each time, in the preceding figure. Beyond dimension $d = 10$, learning becomes increasingly unstable, as the number of samples required to learn to high precision becomes intractable to generate in a reasonable amount of time using our own computational resources, in line with how the sample complexity estimate from Corollary~\ref{cor:sample-complexity} scales with dimension $d$. However, in Appendix~\ref{sec:additional-exps}, we show that by passing to a \textit{time} discretization, our method still generalizes well to dimension $d = 20$. Therein, we also provide evidence that using a neural network architecture with skip connections and layer normalization (e.g., ResNet) can improve training stability in dimension $d = 10$. While the optimization procedure is still relatively stable in dimensions $d = 3, 4, 5, 10$ with feedforward ReLU networks, we anticipate that adding skip connections smooths out the loss landscape and helps prevent our method from getting trapped in spurious local minima. This observation is supported by the theoretical and empirical evidence from ~\cite{balduzzi17b}, although we remark that, even with feedforward ReLU networks, our method performs well in dimension $d = 10$, as in Figure~\ref{fig:sub4}.

Finally, Fig.~\ref{fig:loss-curves}, we illustrate both the training and test loss over the course of Algorithm~\ref{alg:mfg-sampling} for the quadratic model in dimension $d = 3$. Averaging over five trials, we provide empirical evidence for both Corollary~\ref{cor:appx-flow-map} and Corollary~\ref{cor:sample-complexity}, showing that increasing width results in models that {\bf (1)} learn the flow map to greater accuracy (Fig.~\ref{fig:train-loss}) and {\bf (2)} generalize better to unseen samples (Fig.~\ref{fig:test-loss}).

\begin{figure}[h]
    \centering
    \begin{subfigure}[b]{0.24\linewidth}
        \centering
        \includegraphics[width=\linewidth]{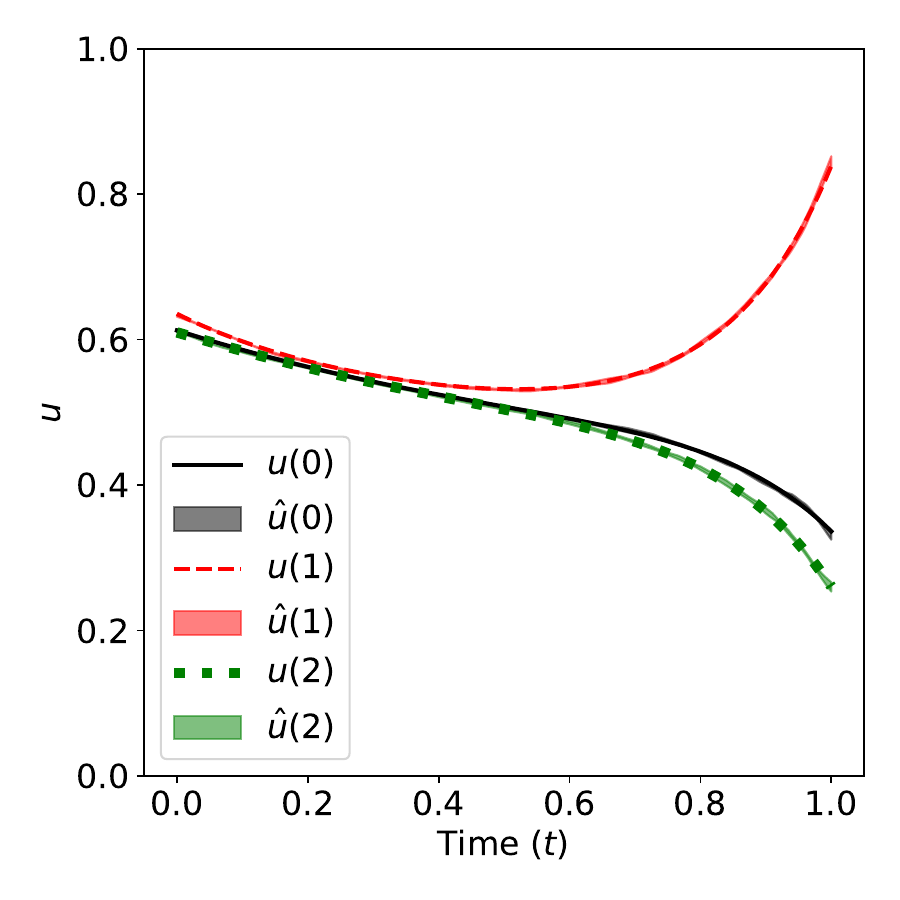}
        \caption{$d = 3$}
        \label{fig:sub1-e}
    \end{subfigure}
    \hfill
    \begin{subfigure}[b]{0.24\linewidth}
        \centering
        \includegraphics[width=\linewidth]{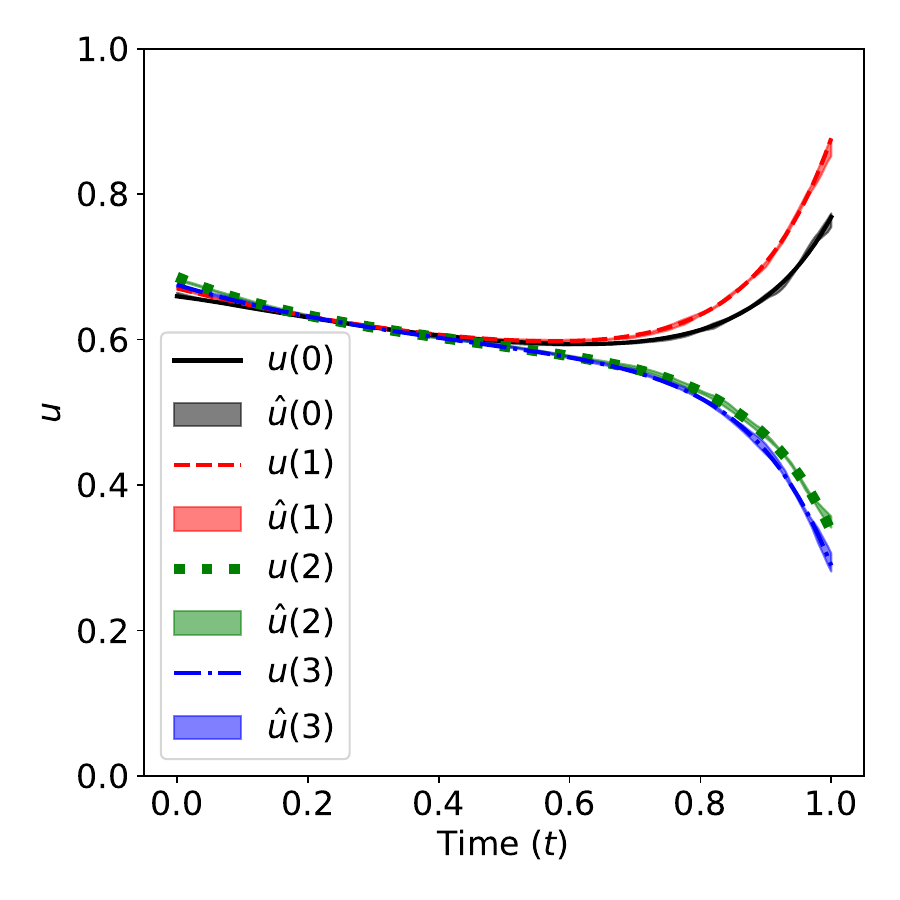}
        \caption{$d = 4$}
        \label{fig:sub2-e}
    \end{subfigure}
    \hfill
    \begin{subfigure}[b]{0.24\linewidth}
        \centering
        \includegraphics[width=\linewidth]{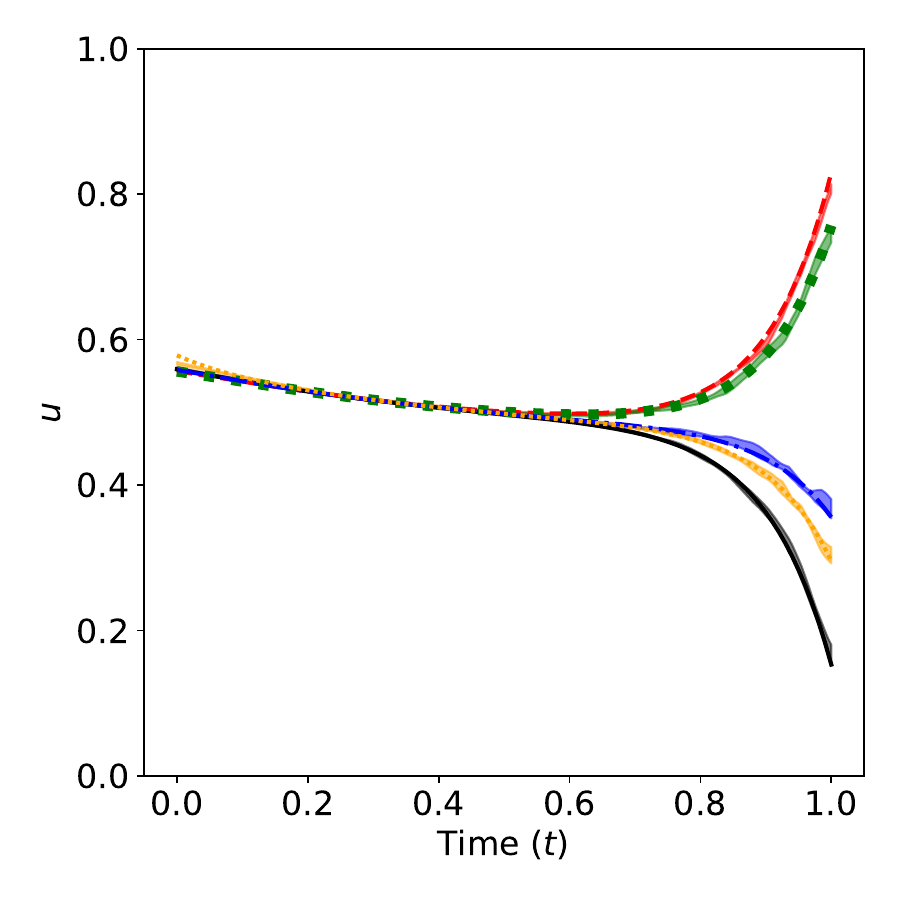}
        \caption{$d = 5$}
        \label{fig:sub3-e}
    \end{subfigure}
    \hfill
    \begin{subfigure}[b]{0.24\linewidth}
        \centering
        \includegraphics[width=\linewidth]{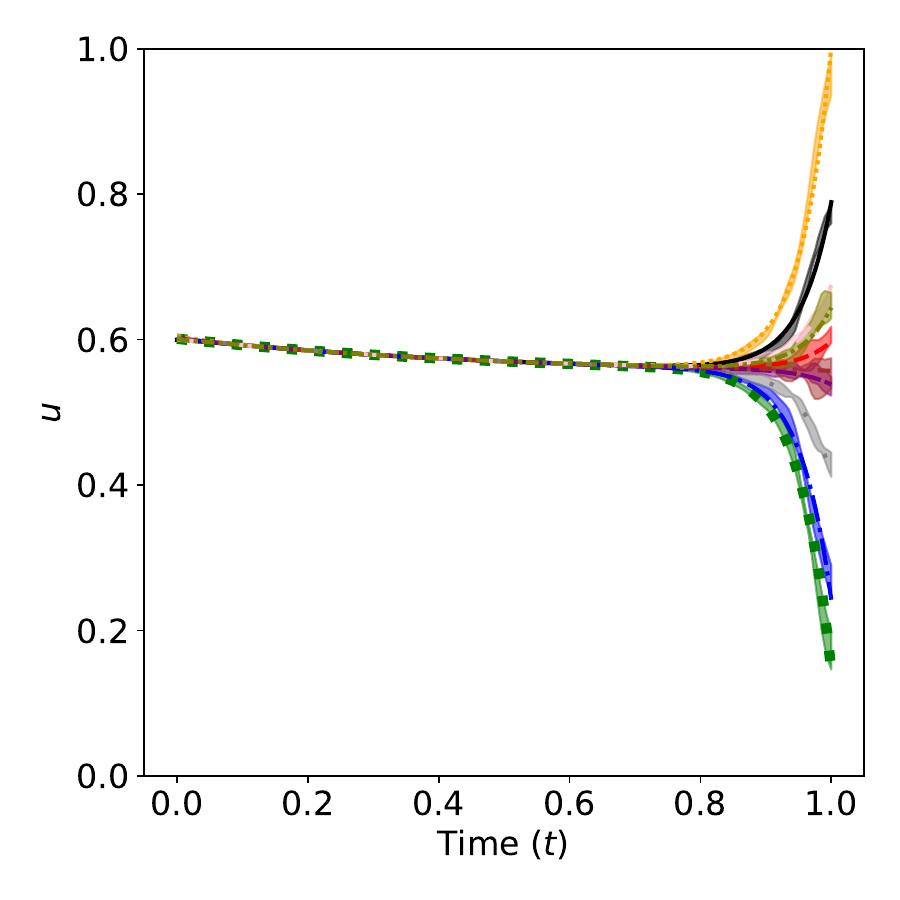}
        \caption{$d = 10$}
        \label{fig:sub4-e}
    \end{subfigure}

    \caption{Comparison of true value functions $u$ and learned value functions $\widehat{u}$ for randomly sampled pairs $(\eta, \kappa)$ in dimensions $d = 3, 4, 5, 10$ respectively. Averages are taken across 5 trials, and shaded regions on approximate curves indicate error bars of one standard deviation, computed across trials.}
    \label{fig:examples-quad-model}
\end{figure}

\begin{figure}[h]
    \centering
    \begin{subfigure}[b]{0.24\linewidth}
        \centering
        \includegraphics[width=\linewidth]{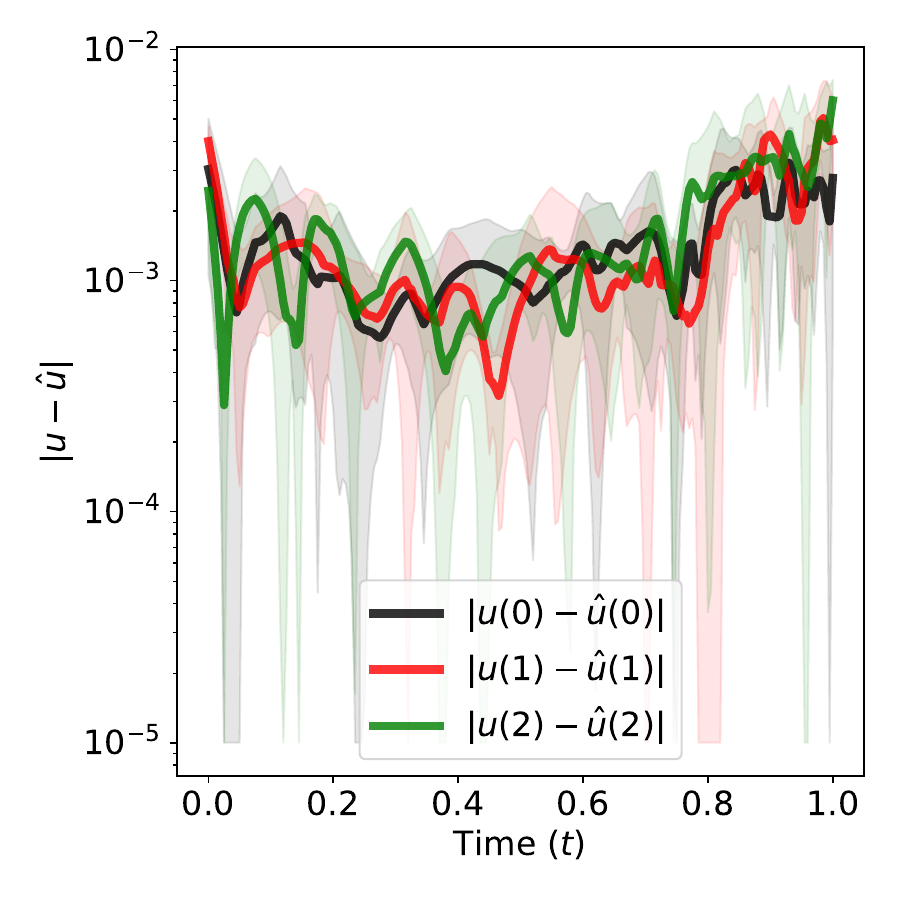}
        \caption{$d = 3$}
        \label{fig:sub1}
    \end{subfigure}
    \hfill
    \begin{subfigure}[b]{0.24\linewidth}
        \centering
        \includegraphics[width=\linewidth]{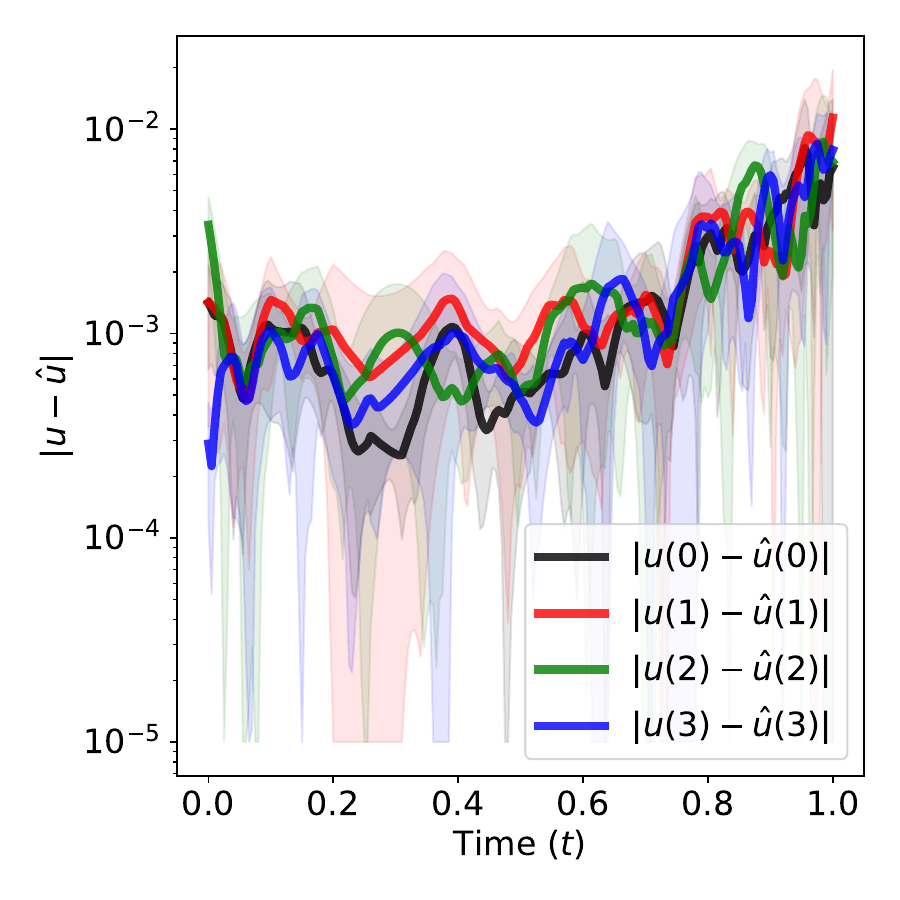}
        \caption{$d = 4$}
        \label{fig:sub2}
    \end{subfigure}
    \hfill
    \begin{subfigure}[b]{0.24\linewidth}
        \centering
        \includegraphics[width=\linewidth]{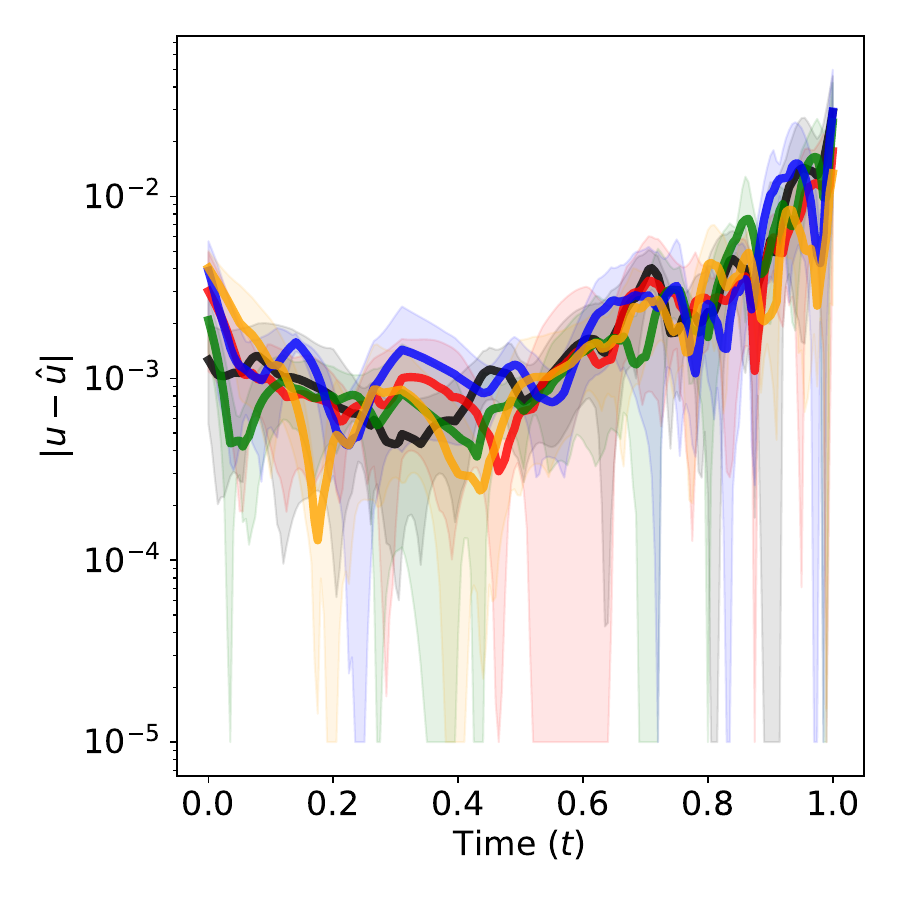}
        \caption{$d = 5$}
        \label{fig:sub3}
    \end{subfigure}
    \hfill
    \begin{subfigure}[b]{0.24\linewidth}
        \centering
        \includegraphics[width=\linewidth]{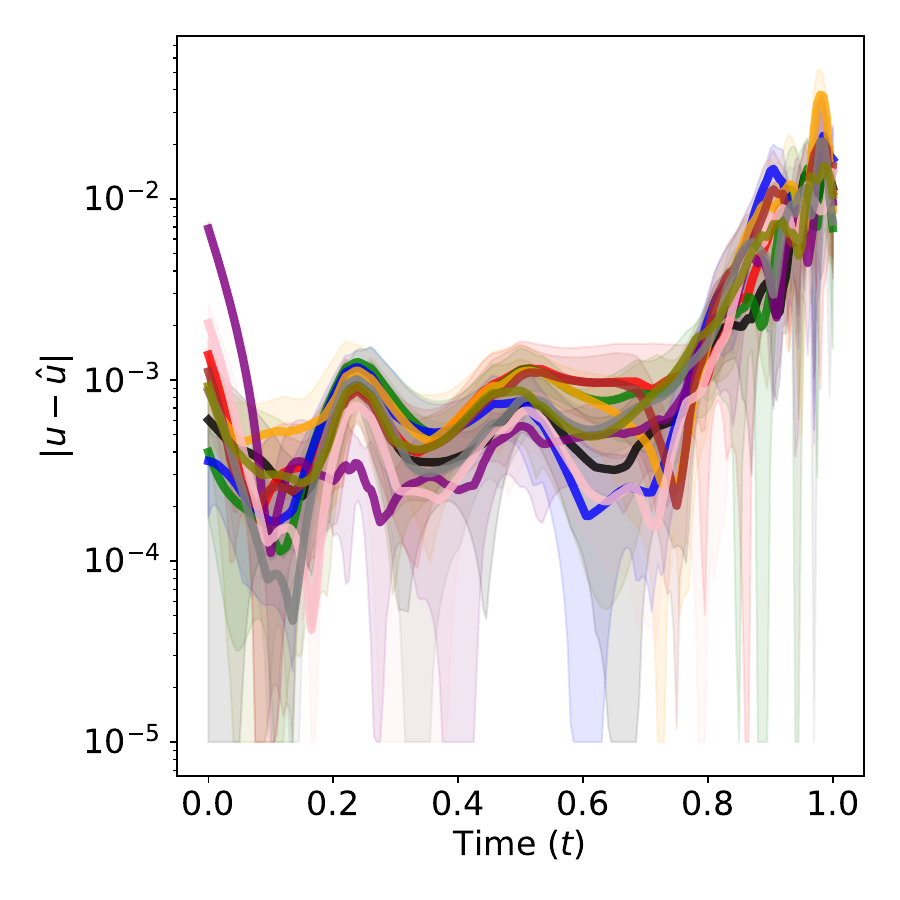}
        \caption{$d = 10$}
        \label{fig:sub4}
    \end{subfigure}

    \caption{Absolute errors of learned value functions $|u - \widehat{u}|$ for randomly sampled pairs $(\eta, \kappa)$ in dimensions $d = 3, 4, 5, 10$ respectively. Averages are taken across 5 trials, and shaded region s on approximate curves indicate error bars of one standard deviation, computed across trials.}
    \label{fig:errors-quad-model}
\end{figure}

\begin{figure}[h]
    \centering
    \begin{subfigure}[b]{0.45\linewidth}
        \centering
        \includegraphics[width=\linewidth]{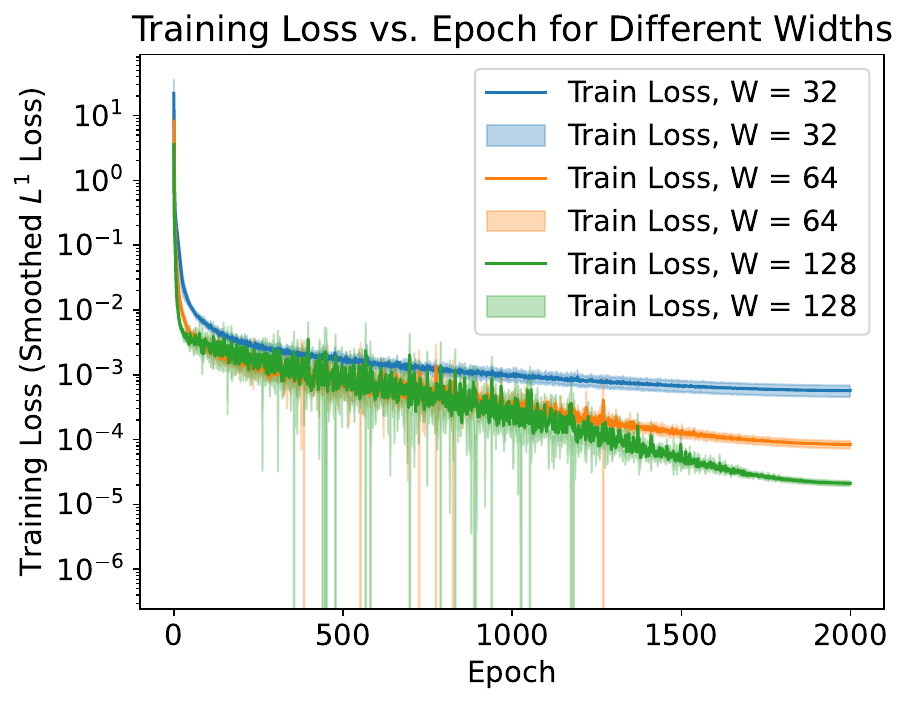}
        \caption{Training loss vs. epoch for $d = 3$ dimensional quadratic model and $W = 32, 64, 128$.}
        \label{fig:train-loss}
    \end{subfigure}
    \hfill
    \begin{subfigure}[b]{0.45\linewidth}
        \centering
        \includegraphics[width=\linewidth]{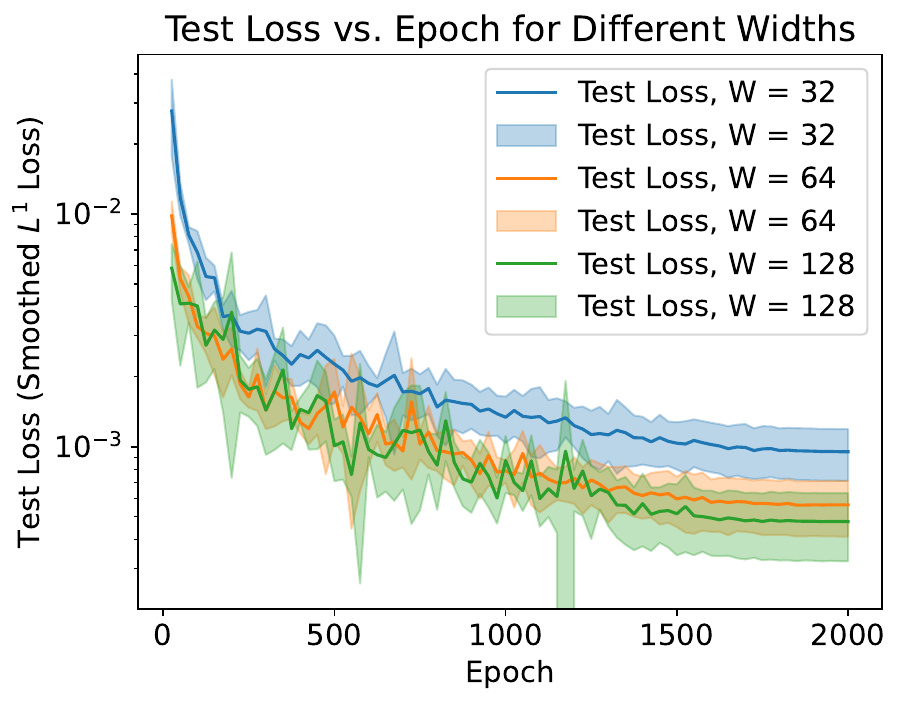}
        \caption{Test loss vs. epoch for $d = 3$ dimensional quadratic model and $W = 32, 64, 128$.}
        \label{fig:test-loss}
    \end{subfigure}
    \caption{Comparison of training loss and test loss, evaluated on held-out data every 25 epochs, for ReLU neural networks with width $W = 32, 64, 128$, depth $L = 4$, and $n = 4000$ samples. Shaded regions represent one standard deviation above/below the mean of five trials. As $W$ increases, the optimization procedure becomes more unstable but both the training and test losses decrease.}
    \label{fig:loss-curves}
\end{figure}

In Table~\ref{tab:optimizers} and Table~\ref{tab:hyperparams} in Appendix~\ref{sec:exp-details}, we describe the optimizers and loss functions that we chose for each of our experiments, including the additional experiments in Appendix~\ref{sec:additional-exps}. In order to verify our theoretical results, which are restricted to the setting of feedforward ReLU networks, we primarily present experiments for this architecture. When selecting an optimizer, we observe that Adam and AdamW perform comparably, although for the high-dimensional quadratic model, AdamW's performance is slightly better. We also find that smooth $L^1$-loss, as opposed to $L^2$-loss, provides a more stable optimization trajectory in the high-dimensional setting. Overall, the optimization of feedforward ReLU networks in the high-dimensional setting is more challenging, but standard optimization tools still yield accurate results \textit{without} relying on more complex neural network architectures.
\section{Conclusion}
\label{sec:conclusion}
We present an operator learning method for solving parametrized families of finite-state MFGs. To our knowledge, our approach provides the most general learning-based framework for solving finite-state MFGs. Our theoretical guarantees rigorously quantify the approximation error, parametric complexity, and generalization performance, and our numerical experiments illustrate the empirical accuracy of our method for a variety of common finite-state MFGs. Our method extends naturally to MFGs with parametrized running costs, with only slight modifications to our regularity proofs required and no modification to Algorithm~\ref{alg:mfg-sampling}. 

We believe that our sampling algorithm, although intuitive, could be improved to gain greater coverage of the flow map's domain, allow for more stable optimization, and enable better generalization. Techniques such as oversampling in regions with poor coverage or adversarial training may prove beneficial. Future work will also include extending our results to continuous state-space MFGs and infinite-dimensional spaces of cost functions, for which powerful operator learning architectures (e.g., DeepONets or FNO) will likely be instrumental. 

\begin{small}
\bibliographystyle{unsrt}
\bibliography{mfg}
\end{small}

\normalfont
\appendix

\section{Experimental Details}
\label{sec:exp-details}
We provide all experimental details in order to recreate our results from Section~\ref{sec:num-exps} and in Appendix~\ref{sec:additional-exps}. Smaller experiments in the cybersecurity example were carried out a 2020 MacBook Pro with an Apple M1 chip and 8GB RAM. For the purposes of timing (see Appendix~\ref{sec:additional-exps}), higher-dimensional experiments in the quadratic example were instead run on a single NVIDIA A100 TensorCore GPU with 40GB of VRAM via Google Colaboratory. All experiments were implemented in the PyTorch machine learning library in Python ~\citep{pytorch}. Code for all experiments is available upon request.

To best align with our theoretical results in Section~\ref{sec:guarantees}, we utilize fully-connected ReLU neural networks for all experiments unless otherwise specified. In all cases, we used mini-batches with $n_{\mathrm{mini}} = 64$ samples for each gradient step during the training loop. All hyperparameters were selected via a grid search, with tuned parameters being: initial learning rate, number of hidden layers (depth), width of each hidden layer, number of training epochs, and number of training samples. In each case, we validated our models by testing on 20\% of the training data, held-out from the training set for validation. In many cases, we found it beneficial to utilize early stopping to prevent overfitting; for higher-dimensional examples, training for \textit{fewer} epochs appears to provide better results. Finally, in all cases, we found that a cosine annealing learning rate scheduler performed best for optimization; we used the default parameters for the CosineAnnealingLR scheduler, as implemented in PyTorch's \texttt{torch.optim} package. We also found that optimization was more slightly stable for higher-dimensional cases when using: (1) the AdamW optimizer with the default weight decay parameter, $\lambda = 0.01$, and (2) smooth $L^1$-loss in place of $L^2$-loss. Nonetheless, we are able to obtain similar results with $L^2$-loss, in line with our theoretical framework in Section~\ref{sec:guarantees}.

In Tables~\ref{tab:optimizers} and~\ref{tab:hyperparams} below, we describe the specific architectures and hyperparameters that we chose for each experiment in Section~\ref{sec:num-exps}, including depth, width, number of training epochs, optimizer parameters, and number of training samples. Finally, in Appendix~\ref{sec:additional-exps}, we provide additional numerical experiments to showcase the accuracy of our method for higher-dimensional quadratic models. Departing from feedforward ReLU networks, we are able to obtain even better performance in $d = 10$ using a ResNet architecture with depth $L = 4$, two layers of width $128$, two hidden layers with width $64$, skip connections between all layers, layer normalization, and dropout with probability $p = 0.05$. See the results of this experiment in Fig.~\ref{fig:quad-cost-d=10-resnet} below.

\begin{table}[h]
\centering
\caption{Optimization details for experiments Section~\ref{sec:num-exps} and Appendix~\ref{sec:additional-exps}.}
\label{tab:optimizers}
\begin{tabular}{lll}
\hline
\textbf{Experiment} &
\textbf{Optimizer} &
\textbf{Loss Function} \\
\hline
Cybersecurity Model & Adam & $L^2$ \\ 
Cybersecurity Model (Fixed Discretization) & Adam & $L^2$ \\ 
Quadratic Model & AdamW & Smooth $L^1$ \\ 
Quadratic Model (Fixed Discretization) & Adam & $L^2$ \\ 
\hline
\end{tabular}
\end{table}

\begin{table}[h]
\tiny
\centering
\caption{Selected hyperparameters for experiments in Section~\ref{sec:num-exps} and Appendix~\ref{sec:additional-exps}. The label `Fixed' indicates that the experiment in question was carried out using a fixed time discretization, as opposed to learning the time-dependent flow map.}
\label{tab:hyperparams}
\begin{tabular}{lccccc}
\hline
\textbf{Experiment} &
\textbf{Training Samples ($n$)} &
\textbf{Epochs ($m_{\textrm{train}}$)} &
\textbf{Width $(W)$} &
\textbf{Depth $(L)$} &
\textbf{Initial Learning Rate}\\
\hline
Cybersecurity Model & 2000 & 2000 & 64 & 4 & $8 \times 10^{-4}$ \\
Cybersecurity Model (Fixed) & 2000 & 1000 & 64 & 4 & $8 \times 10^{-4}$ \\
Quadratic Model ($d = 3$) & 4000 & 2000 & 64 & 4 & $8 \times 10^{-4}$ \\
Quadratic Model ($d = 3$, Fixed) & 4000 & 1000 & 64 & 4 & $8 \times 10^{-4}$ \\
Quadratic Model ($d = 4$) & 4000 & 2000 & 64 & 4 & $8 \times 10^{-4}$ \\
Quadratic Model ($d = 4$, Fixed) & 4000 & 1000 & 64 & 4 & $8 \times 10^{-4}$ \\
Quadratic Model ($d = 5$) & 4000 & 2000 & 64 & 4 & $8 \times 10^{-4}$ \\
Quadratic Model ($d = 5$, Fixed) & 4000 & 1000 & 64 & 4 & $8 \times 10^{-4}$ \\
Quadratic Model ($d = 10$) & 10000 & 500 & 128 & 4 & $1 \times 10^{-4}$ \\
Quadratic Model ($d = 10$, Fixed) & 10000 & 1000 & 64 & 4 & $8 \times 10^{-4}$ \\
Quadratic Model ($d = 20$, Fixed) & 20000 & 1000 & 64 & 4 & $8 \times 10^{-4}$ \\
\hline
\end{tabular}
\end{table}

\section{Markovian Controls and Representative player's Process}
\label{sec:jump-process}
In this appendix, we formally describe the Markovian controls that the representative player in a finite-state MFG chooses, presented at a high level in Section~\ref{sec:mfg-background}.

Denoting $[d] = \{1, \ldots, d\}$ to be the set of states that the player may switch between, a Markovian control refers to a measurable function$$\alpha : \R_+ \times \{1, \ldots, d\} \to \A_{[d]}^d = \bigcup_{x \in [d]} \A_{-x}^d,$$where
\[
    \textstyle \A_{-x}^d := \big\{ a \in \R^d : \forall y \neq x, \quad a_y \in \A, \quad a_x = -\sum_{y \neq x} a_y \big\}.
\]
The value of $\alpha_y(t, x) := \alpha(t, x)_y$, where $x \neq y$, represents the player's rate of transition at time $t$ from the state $x$ to the state $y$. We require that $\alpha_x(t, x) = - \sum_{y \neq x} \alpha_y(t, x)$ for all $x \in [d]$, as is standard for the transition probabilities of a continuous-time Markov chain. More concisely, let $\mathcal{Q}[\A]$ be the set of $d \times d$ transition-rate matrices with rates in $\A := [\mathfrak{a}_l, \mathfrak{a}_u]$. Then, the player chooses Markovian controls $\alpha : [0, T] \to \mathcal{Q}[\A]$ which we refer to as the set of admissible controls. Under this interpretation, $(\alpha(t))_{x, y} = \alpha_y(t, x)$.

In Section~\ref{sec:mfg-background}, we noted that, given a Markovian control $\alpha$ and an initial distribution $\eta \in \calP([d])$, the player's dynamics obey a continuous-time Markov chain with transition probabilities
\begin{align*}
    \Pr(\calX_{t + h}^\eta = y \mid \calX_t^\eta = x) = \alpha_y(t, x) h + o(h), \quad h \to 0^+.
\end{align*}
In fact, this Markov chain arises as the result of a Poisson jump process, which completely describes the dynamics of the representative player. Our method does not rely on the exact details of the jump process, however, and we thus refer the interested reader to \cite[Section 2.3]{Cecchin2017} for additional details on the probabilistic structure of finite-state MFGs that we consider.

In Section~\ref{sec:mfg-background}, we provided a condensed version of the assumptions that ensure that the MFG system has a unique solution. Below, we expand on these assumptions, providing the full suite of assumptions that previous work such as~\cite{bay-coh2019, cec-pel2019, coh-lau-zel2025} all utilize.

Our first two assumptions ensure that the Hamiltonian in~\eqref{eqn:hamiltonian} has a unique minimizer and that the running and terminal costs $F$ and $g$ are monotone in an appropriate sense. Our third assumption is a technical assumption on the strong concavity of the Hamiltonian. Although this last assumption may not appear immediately relevant, it is useful later when we analyze the regularity of the flow map for Equation~\eqref{eqn:mfg} in Appendix~\ref{sec:proofs-reg} below.
\begin{assumption}
\label{asmp:hamiltonian-min}
The Hamiltonian has a unique minimizer, which we refer to as the {\rm optimal rate selector} and is denoted $\gamma^*(x, p) := \argmin_{a \in \A^d_{-x}} \left\{ f(x, a) + a \cdot p \right\}$. The optimal rate selector $\gamma^*$ is a measurable function that, given any $(x, p) \in [d] \times \mathbb{R}^d$, defines a well-defined (unique) rate vector $a$ such that for any $y \neq x$, $a_y \in \A$ and $a_x = -\sum_{y \neq x} a_y$. In particular, it is sufficient that $f$ is strictly convex with respect to $a$.
\end{assumption}
\begin{assumption}
\label{asmp:lip-mon}
The functions $F$ and $g$ are continuously differentiable in $\eta$ with Lipschitz derivatives. Moreover, $F$ and $g$ are Lasry--Lions monotone in the sense that for both $\phi = F, g$,
\begin{align}
    \sum_{x \in [d]} (\phi(x, \eta) - \phi(x, \hat{\eta})) (\eta_x - \hat{\eta}_x) \geq 0,
\end{align} 
for any $\eta, \hat{\eta} \in \mathcal{P}([d])$.     
\end{assumption}
\begin{assumption}
\label{asmp:hamiltonian-hess}
Assume that, for some $W > 0$, the derivatives $D^2_{pp}H$ and $D_p H$ of the Hamiltonian exist and are Lipschitz in $p$ on $[-W, W]$. Moreover, $H$ is strictly concave in $p$: there exists a positive constant $C_{2,H}$ such that:
\begin{align}
    D^2_{pp} H(x,p) \leq -C_{2,H}.
\end{align} 
\end{assumption}
When $H$ is differentiable, \cite[Proposition~1]{Gomes2013} implies that
\begin{align}
    \gamma^*(x,p) = D_p H(x,p),
\end{align}
a useful property when establishing regularity of the flow map. Moreover, if Assumption~\ref{asmp:hamiltonian-hess} holds, then $\gamma^*$ is locally Lipschitz.
\section{The Master Equation}
The master equation is given by the following nonlinear PDE:
\begin{equation}\label{master_equation:finite_horizon}
    \begin{cases}
    \partial_t U(t,x,\eta) + \sum_{y,z\in [d]} \eta_y D^\eta_{yz} U(t,x,\eta) \gamma_z^*(y, \Delta_y U(t,\cdot,\eta)) + \bar H(x,\eta,\Delta_x U(t,\cdot,\eta)) = 0, \\
    U(T,x,\eta) = g(x,\eta), \qquad (t,x,\eta) \in [0,T)\times [d]\times \calP([d]),
    \end{cases}
\end{equation}
Above, $U : [0, T] \times [d] \times \calP([d]) \to \R$, with $D_{yz}^\eta$ denoting a directional derivative in the direction of the vector $e_{yz} := e_y - e_z$ on the probability simplex, where $e_y, e_z \in \R^d$ are standard basis vectors indexed by $y, z \in [d]$. More precisely, for $\phi : \calP([d]) \to \R$, we define
\begin{align}
\label{eqn:dir-derivative}
    D_{yz}^\eta \phi(\eta) := \lim_{h \to 0^+} \frac{\phi(\eta + e_{yz} h) - \phi(\eta)}{h}.
\end{align}
Note that this convention respects the geometry of the simplex, in the sense that derivatives are only allowed in directions along the simplex: if $\eta \in \calP([d])$, then $\eta + e_{yz} h \in \calP([d])$ for all $h$ sufficiently small.

We have the following result concerning the master equation, both providing its regularity and establishing the consistency relation invoked in Corollary~\ref{thm:flow-time-regularity} above. This proposition combines results from \cite[Proposition 1, Proposition 5,
Theorem 6]{cec-pel2019} and \cite[Section 1.2.4]{CardaliaguetDelarueLasryLions}.
\label{sec:master-eq}
\begin{proposition}\label{prop:me_results}
There exists a unique solution, denoted by $(u^{t_0,\eta}, \mu^{t_0,\eta})$, in $\calC^{1}([t_0, T] \times [d], \R) \times \calC^{1}([t_0,T] \times [d], \calP([d]))$ to \eqref{eqn:mfg}. Let $U$ be defined by: 
\begin{align}\label{eqn:U_def}
    U(t_0,x,\eta) := u^{t_0,\eta} (t_0,x).
\end{align} 
Then, the master field $U$ is the unique classical solution to the master~\eqref{master_equation:finite_horizon}. Moreover, we have the consistency relation such that for all $t_0 \in [0,T]$, 
\begin{align}\label{eqn:consistency_relation}
    U(t,x,\mu^{t_0,\eta}(t))=u^{t,\mu^{t_0,\eta}(t)}(t)=u^{t_0,\eta}(t), \qquad (t, x, \eta) \in [t_0,T] \times [d] \times \calP([d]).
\end{align}
Finally, $U(\cdot, x,\cdot) \in \calC^{1,1}([0,T] \times \calP([d]))$ for every $x\in [d]$.
\end{proposition}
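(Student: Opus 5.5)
The plan is to assemble the statement from the well-posedness theory of the MFG system, in the form of \cite{cec-pel2019} and \cite[Section 1.2.4]{CardaliaguetDelarueLasryLions}, in four steps.

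\textbf{Step 1: well-posedness of \eqref{eqn:mfg} on $[t_0,T]$.} Fix $t_0 \in [0,T]$ and $\eta \in \calP([d])$.
\begin{itemize}
\item \emph{Existence.} Take the set of continuous flows $\mu : [t_0,T] \to \calP([d])$, which is convex and compact in the sup norm once restricted to uniformly Lipschitz flows. Since rates lie in the compact set $\A$, KFP solutions are uniformly Lipschitz in time, so this restriction is harmless. The fixed-point map is: given $\mu$, solve the backward HJB with terminal data $g(\cdot,\mu(T))$, then solve the forward KFP from $\eta$ with rates $\gamma^*(\cdot,\Delta u)$. The HJB is a Lipschitz ODE in $u$ once $\mu$ is frozen, so it has a unique $\calC^1$ solution with an a priori bound on $|u|$. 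This bound lets us work on the region $[-W,W]$ where $D_pH$ is Lipschitz (Assumption~\ref{asmp:hamiltonian-hess}). Continuity of the map follows from Lipschitz dependence of $\gamma^* = D_pH$ on $p$ and of $F$, $g$ on $\mu$. Schauder's theorem then gives a fixed point, and the ODE structure gives $\calC^1$ regularity in time.
\item \emph{Uniqueness.} Use the Lasry--Lions duality argument. For two solutions, differentiate $\sum_x (u_1-u_2)(t,x)(\mu_1-\mu_2)(t,x)$ in time and integrate over $[t_0,T]$. Monotonicity of $F$ and $g$ makes the boundary and coupling terms nonnegative. Strict concavity of $H$ then forces $\gamma^*(\Delta u_1)=\gamma^*(\Delta u_2)$ wherever $\mu_i>0$. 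Hence the two KFP equations coincide, so $\mu_1=\mu_2$, and then $u_1=u_2$ by uniqueness for the HJB.
\end{itemize}

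\textbf{Step 2: the consistency relation \eqref{eqn:consistency_relation}.} Define $U$ by \eqref{eqn:U_def}. Fix $t \ge t_0$ and restrict $(u^{t_0,\eta},\mu^{t_0,\eta})$ to $[t,T]$. The restriction solves \eqref{eqn:mfg} on $[t,T]$ with initial condition $\mu^{t_0,\eta}(t)$. By the uniqueness in Step 1 it equals $(u^{t,\mu^{t_0,\eta}(t)},\mu^{t,\mu^{t_0,\eta}(t)})$. Evaluating at time $t$ gives $U(t,x,\mu^{t_0,\eta}(t)) = u^{t_0,\eta}(t,x)$.

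\textbf{Step 3: regularity of $U$ in $\eta$ (main obstacle).} I would differentiate the MFG system with respect to the initial datum along the directions $e_{yz}$. This produces a \emph{linearized} forward--backward system for $(v,m)$:
\begin{itemize}
\item the backward equation is driven by $D_pH$, with coupling through $D_\eta F$;
\item the forward equation is driven by $\gamma^*$ and $D_p\gamma^* = D^2_{pp}H$;
\item the terminal condition is $v(T)=D_\eta g(\mu(T))m(T)$, and $m(t_0)=e_{yz}$.
\end{itemize}
I would first show this system is well posed with a bound $|v|+|m| \le C|e_{yz}|$. The natural tool is again the duality pairing $\sum v\,m$. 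Monotonicity of $D_\eta F$ and $D_\eta g$, together with the uniform concavity bound $D^2_{pp}H \le -C_{2,H}$, gives coercive control of $\Delta v$, and the bound then follows by Gronwall. Next I would prove that difference quotients $(u^{t_0,\eta+he_{yz}}-u^{t_0,\eta})/h$ converge to $v$, with remainder $O(h)$. This uses the Lipschitz continuity of $D^2_{pp}H$, $D_\eta F$ and $D_\eta g$, and is where the Lipschitz-derivative hypotheses enter. The result is that $U(t,x,\cdot)$ is $\calC^1$ and that $D_\eta U$ is Lipschitz in $\eta$ with constants uniform in $t$. I expect the uniform estimate for the linearized system to be the delicate point, since the pairing identity must exactly absorb the cross terms.

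\textbf{Step 4: time regularity and the master equation.} Time regularity of $U$ comes from the consistency relation, $U(t,x,\mu^{t_0,\eta}(t)) = u^{t_0,\eta}(t,x)$. Differentiating in $t$ by the chain rule, using the KFP equation for $\dot\mu$ and the HJB equation for $\dot u$, and evaluating at $t=t_0$ gives exactly \eqref{master_equation:finite_horizon}. Hence $U$ is a classical solution, $\partial_t U$ is Lipschitz, and $U(\cdot,x,\cdot) \in \calC^{1,1}$. Uniqueness of classical solutions follows by running the argument backwards. Any classical solution $\tilde U$ generates a flow via the KFP equation with rates $\gamma^*(\Delta\tilde U)$. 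Along that flow, $\tilde u(t):=\tilde U(t,\cdot,\mu(t))$ solves the HJB equation, so $(\tilde u,\mu)$ solves \eqref{eqn:mfg}. Step 1 then identifies $\tilde U$ with $U$.
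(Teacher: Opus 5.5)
Your proposal is correct and takes essentially the paper's route: the paper gives no argument of its own, deferring to \cite[Prop.~1, Prop.~5, Thm.~6]{cec-pel2019} and \cite[Sec.~1.2.4]{CardaliaguetDelarueLasryLions}, and your four steps reconstruct those arguments (fixed point plus Lasry--Lions uniqueness, the consistency relation from uniqueness, $\eta$-differentiability through the linearized forward--backward system and its duality estimate, and the master equation read off along equilibrium flows). When writing it out, state three things explicitly: the nonlinear Lipschitz stability in $\eta$ (Prop.~5 there, the $\kappa_1=\kappa_2$ case of Lemma~\ref{lem:final-stability}), which your $O(h)$ difference-quotient bound needs; that $\partial_t U$ must be obtained from the consistency relation and the $\eta$-regularity (right derivative, then continuity) rather than assumed in a chain rule; and that the $\calC^{1,1}$ claim also requires $D_\eta U$ to be Lipschitz in $t$, which Lipschitz dependence in $\eta$ uniformly in $t$ does not give and which follows from a consistency relation for the linearized system.
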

Note that the above result is stated in the more general setting, where our MFG begins at time $t_0 \in [0, T]$, with the initial distribution specified as $\mu(t_0, x) = \eta(x)$. Then, $u^{t_0, \eta}$ and $\mu^{t_0, \eta}$ describe the evolution of the value function and flow of measures starting at time $t_0$; this formalism is necessary for results concerning the master equation, but it is not directly relevant to our setting, so we assume that $t_0 = 0$ throughout.
\section{Picard Iteration for Forward-Backward Systems}
\label{sec:picard}
In this section, we describe the precise details of the Picard iteration map, denoted by $\Gamma_g$, that we use as a subprocess in Algorithm~\ref{alg:mfg-sampling} for sampling from parametrized families of finite-state MFGs. Specifically, we recall the forward-backward MFG system from~\eqref{eqn:mfg}:
\begin{align*}
    \begin{split}
    &\frac{d}{dt} u^{\eta, \kappa} (t,x) + \bar H(x, \mu^{\eta, \kappa}(t), \Delta_x u^{\eta, \kappa} (t,\cdot)) = 0, \qquad (t,x) \in [0, T] \times [d],\\ 
    &\frac{d}{dt} \mu^{\eta, \kappa} (t,x) = \sum_{y\in [d]} \mu^{\eta, \kappa} (t,y) \gamma^*_x(y,\Delta_y u^{\eta, \kappa} (t,\cdot)), \qquad (t,x) \in [0, T] \times [d] ,\\ 
    &\mu^{\eta, \kappa} (0, x) = \eta(x), \qquad x \in [d], \\ 
    &u^{\eta, \kappa}(T,x) = g_\kappa(x,\mu^{\eta, \kappa} (T)), \qquad x \in [d].
\end{split}
\end{align*}
To solve this ODE system on the time interval $[0, T]$, we introduce a time discretization with $M$ points and time step $\Delta t := 1 / M$, partitioning the interval $[0, T]$ into subintervals $[t_i, t_{i+ 1}]$ with $t_i = i \Delta t$ for $i = 0, \ldots, M$. Then, for each $i = 0, \ldots, M - 1$, the time-discretized system becomes a nonlinear system of equations given by
\begin{align}
\begin{split}\label{eqn:time-disc-mfg}
    &u^{\eta, \kappa}(t_{i+1}, x) - u^{\eta, \kappa}(t_i, x) = -\Delta t \overline{H}(x, \mu^{\eta, \kappa}(t_{i+1}), \Delta_x u^{\eta, \kappa}(t_i, \cdot)), \qquad x \in [d], \\
    &\mu^{\eta, \kappa}(t_{i+1}, x) - \mu^{\eta, \kappa}(t_i, x) = \Delta t \sum_{y \in [d]}  \mu^{\eta, \kappa} (t_i, y) \gamma^*_x(y,\Delta_y u^{\eta, \kappa} (t_{i+1},\cdot)), \qquad x \in [d], \\
    &\mu^{\eta, \kappa}(t_0, x) = \eta(x), \qquad x \in [d], \\
    &u^{\eta, \kappa}(t_{M}, x) = g_{\kappa}(x, \mu^{\eta, \kappa}(t_M)), \qquad x \in [d].
\end{split}
\end{align}
Using fixed point iteration, we produce approximations of the value function $\{u^{\eta, \kappa}(t_i, \cdot)\}_{i=0}^{M}$ and flow of measures of $\{\mu^{\eta, \kappa}(t_i, \cdot)\}_{i=0}^{M}$, evaluated along the time discretization $t_0, \ldots, t_M$. Now, given \textit{fixed} $\kappa \in \calK$, the output of the Picard iteration map $\Gamma : \calP([d]) \to (\R^d)^{M + 1}$ is $\Gamma(\eta)_i \approx u^{\eta, \kappa}(t_i, \cdot) \in \R^d$.

For ease of notation, we suppress the dependence of $\mu$ and $u$ on $(\eta, \kappa) \in \calP([d]) \times \calK$ below, noting that this method solves a \textit{single} MFG from a parametrized family. To begin, we initialize vectors $\mu^{(0)} \in \R^{M + 1}$ and $u^{(0)} \in \R^{M + 1}$ with $\mu^{(0)}_0(x) = \eta(x)$ and $u^{(0)}_M(x) = g_{\kappa}(x, \mu^{(0)}_M)$ for $x \in [d]$. Then, we alternate between updates to $u$ and $\mu$ via the finite difference equations in~\eqref{eqn:time-disc-mfg}, producing iterates $u^{(k)} \in \R^{M}$ and $\mu^{(k)} \in \R^k$ in an alternating fashion. If the map $u^{(i)} \mapsto u^{(i + 1)}$ is a strict contraction, then a standard argument via the Banach fixed point theorem shows that this iterative procedure will converge the solution $u \in (\R^d)^{M + 1}$ to the time-discretized Equation~\eqref{eqn:time-disc-mfg}. Importantly, note that the discretization of the time derivative incurs an error of $\mathcal{O}(\Delta t)$, so we must take $\Delta t$ small in order for fixed point iteration be accurate.

As discussed in \cite[Section 2.3]{Lauriere2021}, Picard iteration for such forward-backward systems may sometimes be numerically unstable. If this is the case, we may introduce a sequence of damping parameters $\{\delta^{(k)}\}_{k \in \N}$ and carry out damped updates to one of the updates. For instance, in~\cite{Lauriere2021}, the author includes an auxiliary update $\tilde{\mu}^{(k)}$, with $\mu^{(0)} = \tilde{\mu}^{(0)}$, and updates the forward equation via
\begin{align*}
    \tilde{\mu}^{(k + 1)} = \delta^{(k)} \tilde{\mu}^{(k)} + (1 - \delta^{(k)}) \mu^{(k)}
\end{align*}
to encourage more stable convergence. Then, the backward equation is updated with $\tilde{\mu}^{(k)}$; the update to the forward equation remains the same. This algorithm, based on \cite[Algorithm 1]{Lauriere2021}, is included below. In the numerical examples in Section~\ref{sec:num-exps}, we do not require damping in order for fixed point iteration to converge quickly and we simply take $\delta^{(k)} = 0$ for all $k \in \N$. However, for more complex MFG systems, damping may be a helpful augmentation of our sampling procedure. Algorithm~\ref{alg:picard} provides a summary of the procedure outlined above.
\begin{algorithm}[h]
\caption{Picard Iteration for Time-Discretized MFGs}\label{alg:picard}
\begin{algorithmic}[1]
\Require Parameters $(\eta, \kappa) \in \calP([d]) \times \calK$, number of time steps $M \in \N$, tolerance $\varepsilon > 0$, damping schedule $\{\delta^{(k)}\}_{k \in \N}$, initializations $u_0, \mu_0 \in (\R^{d})^{M + 1}$
\State $u^{(0)} \gets u_0$
\State $\mu^{(0)} \gets \mu_0$
\State $\Tilde{\mu}^{(0)} \gets \mu_0$
\State $k \gets 0$
\While{$\|u^{(k+1)} - u^{(k)}\|_2 \geq \varepsilon$ or $\|\mu^{(k+1)} - \mu^{(k)}\|_2 \geq \varepsilon$}
\State Solve the discretized backward equation in Equation~\eqref{eqn:time-disc-mfg} for $u^{(k + 1)}$, with input $\Tilde{\mu}^{(k)}$.
\State Solve the discretized foward equation in Equation~\eqref{eqn:time-disc-mfg} for $\mu^{(k + 1)}$, with input $u^{(k+1)}$.
\State $\tilde{\mu}^{(k + 1)} \gets \delta^{(k)} \tilde{\mu}^{(k)} + (1 - \delta^{(k)}) \mu^{(k)}$
\State $k \gets k + 1$
\EndWhile
\State \Return $u^{(k)}$
\end{algorithmic}
\end{algorithm}
\section{Technical Proofs}
\label{sec:technical-proofs}
In this section, we present technical lemmata and proofs for our claims about the regularity of flow maps for parametrized families of MFGs. First, we recall some useful notation. For any compact set $K \subseteq \R^d$ and a function $\phi : K \to \R$, we define
\begin{align*}
    \|\phi\|_{\infty} := \sup_{x \in K} |\phi(x)|.
\end{align*}
All functions such that $\|\phi\|_{\infty} < \infty$ form the Banach space $\calC^0(K)$. For instance, for functions such as $u : [0, T[ \times [d] \to \R$, we take
\begin{align*}
    \|u\|_{\infty} = \sup_{t \in T} \max_{x \in [d]} |u(t, x)|.
\end{align*}
We also occasionally refer to the spaces $\calC^{0, 1}(K)$, consisting of all Lipschitz functions on $K$, and $\calC^{0, 1}(K)$, consisting of all continuously differentiable functions on $K$ with Lipschitz derivatives. For functions on the $d$-dimensional probability simplex $\calP([d])$, we only allow directional derivatives along the directions $e_y  - e_x$, where $e_x, e_y$ are standard basis vectors in $\R^d$.

\subsection{Proofs of Regularity Result}
\label{sec:proofs-reg}
For our regularity results, \cite[Proposition 5]{cec-pel2019} provides a very useful starting point. Importantly, the authors of~\cite{cec-pel2019} work under Assumptions~\ref{asmp:mfg-uniqueness}. It remains to incorporate the added effect of a changing terminal condition, restricted to a parametrized set of functions under Assumption~\ref{asmp:param-terminal-cost}, into their results.

First, we define $\tilde{\Phi}(t, \eta) := U(t, \cdot, \mu^{\eta}(t)) = u(t, \cdot)$. due to the consistency relation in Proposition~\ref{prop:me_results} in Appendix~\ref{sec:master-eq}, where $U$ is the solution to the master equation. Now, Proposition~\ref{prop:me_results} also provides that $U(\cdot, x, \cdot) \in \calC^{1,1}([0, T] \times \calP([d]))$ for every $x \in [d]$ under our assumptions, which directly implies the following:
\begin{lemma}
\label{lem:simplified-flow-regularity}
Under Assumption~\ref{asmp:mfg-uniqueness}, The flow map $\tilde{\Phi} : [0, T] \times \calP([d]) \to \R^d$, given by $\tilde{\Phi}(t, \eta) = u^{\eta}(t, \cdot)$, satisfies $\Phi \in \calC^{1,1}([0, T] \times \calP([d]) ; \R^d)$.
\end{lemma}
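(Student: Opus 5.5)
The plan is to read the regularity of $\tilde{\Phi}$ off the master field $U$ by composing it with the equilibrium flow of measures. By the consistency relation~\eqref{eqn:consistency_relation} in Proposition~\ref{prop:me_results} with $t_0 = 0$,
\begin{align*}
\tilde{\Phi}(t,\eta)_x = u^{\eta}(t,x) = U(t,x,\mu^{\eta}(t)), \qquad (t,x,\eta)\in[0,T]\times[d]\times\calP([d]).
\end{align*}
Since $U(\cdot,x,\cdot)\in\calC^{1,1}([0,T]\times\calP([d]))$ by Proposition~\ref{prop:me_results}, it suffices to show two things: that $(t,\eta)\mapsto(t,\mu^\eta(t))$ is $\calC^{1,1}$ from $[0,T]\times\calP([d])$ into $[0,T]\times\calP([d])$, and that compositions of $\calC^{1,1}$ maps on compact sets are again $\calC^{1,1}$. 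The second fact follows from the chain rule: the derivative of the composition is a product of bounded Lipschitz factors, hence bounded and Lipschitz. Vector-valuedness is harmless, since we argue componentwise in $x\in[d]$.

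\textbf{Step 1: decouple the KFP equation.} Substituting $u^\eta(t,\cdot) = U(t,\cdot,\mu^\eta(t))$ into the KFP equation of~\eqref{eqn:mfg} shows that $\mu^\eta$ solves the closed forward ODE
\begin{align*}
\tfrac{d}{dt}\mu(t) = V(t,\mu(t)), \qquad V(t,m)_x := \sum_{y\in[d]} m_y\, \gamma^*_x\bigl(y,\Delta_y U(t,\cdot,m)\bigr), \qquad \mu(0)=\eta.
\end{align*}
By Proposition~\ref{prop:me_results}, $\Delta_y U$ is $\calC^{1,1}$ in $(t,m)$ and bounded on the compact set $[0,T]\times\calP([d])$. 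Choosing $W$ in Assumption~\ref{asmp:hamiltonian-hess} at least this bound, $\gamma^* = D_pH$ is $\calC^1$ with Lipschitz derivative on the relevant range, because $D^2_{pp}H$ is Lipschitz there. Hence $V$ is $\calC^{1,1}$ in $(t,m)$.

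\textbf{Step 2: regularity of the flow in its initial datum.} Next I would invoke the standard differentiable-dependence theorem. Let $J(t) := D_\eta\mu^\eta(t)$, taken along the tangent directions $e_y-e_z$ of the simplex. Then $J$ solves the variational equation $\dot J = D_mV(t,\mu^\eta(t))\,J$ with $J(0)=I$. Gronwall gives a uniform bound on $J$. A second Gronwall argument on the difference of two variational equations, using that $D_mV$ is Lipschitz and that $\eta\mapsto\mu^\eta$ is Lipschitz, shows that $\eta\mapsto J(t)$ is Lipschitz uniformly in $t$. The time derivative $\partial_t\mu^\eta(t) = V(t,\mu^\eta(t))$ is Lipschitz in $(t,\eta)$, as a composition of Lipschitz maps. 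Together these give $(t,\eta)\mapsto\mu^\eta(t)$ in $\calC^{1,1}$, and the composition step then yields the lemma.

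\textbf{Main obstacle.} The delicate point is geometric bookkeeping on the simplex. Derivatives are only defined along the directions $e_y-e_z$, and the boundary of $\calP([d])$ must be handled. Since the KFP flow preserves the simplex (it is generated by rate matrices), I would first extend $U$, and hence $V$, to a $\calC^{1,1}$ function on a neighborhood of the simplex within the affine hyperplane $\{\sum_x m_x=1\}$, for instance by a Whitney-type or reflection extension. I would then run the ODE estimates above in affine coordinates on that hyperplane. A secondary point is ensuring that $\Delta U$ stays in the region $[-W,W]$ where Assumption~\ref{asmp:hamiltonian-hess} applies; this is handled by the a priori bound on $U$ from Proposition~\ref{prop:me_results}.
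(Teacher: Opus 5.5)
Your proposal is correct and follows the same route as the paper: the consistency relation \eqref{eqn:consistency_relation} combined with the $\calC^{1,1}$ regularity of the master field $U$ from Proposition~\ref{prop:me_results}. The paper simply asserts that this ``directly implies'' the lemma, whereas you rightly note that composing with $(t,\eta)\mapsto\mu^\eta(t)$ also needs $\calC^{1,1}$ dependence of the equilibrium flow on $\eta$, and your decoupled KFP ODE plus variational-equation argument supplies exactly that step, which the paper leaves implicit.
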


Including Assumption~\ref{asmp:param-terminal-cost}, on top of Assumptions~\ref{asmp:mfg-uniqueness}, we show that the flow map \begin{align*}
    \Phi : [0, T] \times \calP([d]) \times \calK \to \R^d, \quad \Phi(t, \eta, \kappa) = u^{\eta, \kappa}(t, \cdot)
\end{align*}
is Lipschitz in all three arguments. Above, recall that the notation $u^{\eta, \kappa}$ denotes the value function that solves the MFG system, with initial distribution $\eta \in \calP([d])$ and terminal cost $g_\kappa$, where $\kappa \in \calK$. In turn, Lipschitz regularity of the flow map on the compact set $[0, T] \times \calP([d]) \times \calK$, recalling that Assumption~\ref{asmp:param-terminal-cost} requires that $\calK$ is compact, is sufficient to invoke the approximation guarantees provided in~\cite{Jiao2023}. We begin with a stability estimate for the parametrized family of MFG systems obeying Assumption~\ref{asmp:param-terminal-cost}.
\begin{lemma}
\label{lem:u-stability-term-condition}
Let $(u_1, \mu_1)$ and $(u_2, \mu_2)$ solve the MFG system in~\eqref{eqn:mfg} with data $(\eta_1, g_{\kappa_1})$ and $(\eta_2, g_{\kappa_2})$ respectively, with $\eta_1, \eta_2 \in \calP([d])$ and $\kappa_1, \kappa_2 \in \calK \subset \R^k$. If Assumptions~\ref{asmp:mfg-uniqueness}--\ref{asmp:param-terminal-cost} hold, then there exists a constant $C > 0$ such that
\begin{align}
    \sup_{t \in [0, T]} \max_{x \in [d]}|u_1(t, x) - u_2(t, x)| \leq C (|\kappa_1 - \kappa_2| + \|\mu_1 - \mu_2\|_\infty).
\end{align}
\end{lemma}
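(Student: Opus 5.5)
Since $\mu_1,\mu_2$ are treated as given in Lemma~\ref{lem:u-stability-term-condition}, I would view each $u_i$ as the solution of a backward HJB equation with frozen mean field, and run a Gronwall argument backward in time. Set $w(t,x) := u_1(t,x) - u_2(t,x)$. Subtracting the two HJB equations in~\eqref{eqn:mfg} gives
\begin{align*}
    -\tfrac{d}{dt} w(t,x) = \big[H(x,\Delta_x u_1(t,\cdot)) - H(x,\Delta_x u_2(t,\cdot))\big] + \big[F(x,\mu_1(t)) - F(x,\mu_2(t))\big],
\end{align*}
with terminal datum
\begin{align*}
    w(T,x) = g_{\kappa_1}(x,\mu_1(T)) - g_{\kappa_2}(x,\mu_2(T)).
\end{align*}

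\textbf{Terminal datum.} I would split it as $[g_{\kappa_1}(x,\mu_1(T)) - g_{\kappa_2}(x,\mu_1(T))] + [g_{\kappa_2}(x,\mu_1(T)) - g_{\kappa_2}(x,\mu_2(T))]$. The first bracket is at most $C|\kappa_1-\kappa_2|$ by Assumption~\ref{asmp:param-terminal-cost}. The second is at most $C\|\mu_1-\mu_2\|_\infty$, since $g_{\kappa_2}$ is $C^1$ with Lipschitz derivative on the compact simplex and hence Lipschitz in $\eta$. I need this Lipschitz constant uniform in $\kappa\in\calK$, which I would take as part of Assumption~\ref{asmp:param-terminal-cost}, or deduce from compactness of $\calK$ and continuity in $\kappa$.

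\textbf{Source terms.} The $F$-difference is bounded by $C\|\mu_1-\mu_2\|_\infty$ in the same way. For the Hamiltonian difference, I first need a uniform a priori bound on $\|u_i\|_\infty$, independent of $(\eta,\kappa)$. This follows because $u$ is the value function of a control problem with bounded costs: $g_\kappa$ is bounded uniformly on the compact set $\calP([d])\times\calK$, $F$ is continuous on a compact set, and $\A$ is compact. Hence $\Delta_x u_i$ lies in a fixed compact set $[-R,R]^d$, on which $H$ is Lipschitz with constant $L_H = \sup|D_pH|$; indeed $D_pH = \gamma^*$ is bounded by $\max\A$. Therefore
\begin{align*}
    |H(x,\Delta_x u_1)-H(x,\Delta_x u_2)| \le 2(d-1)L_H \max_y|w(t,y)|.
\end{align*}

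\textbf{Gronwall.} Integrating from $t$ to $T$ and setting $\phi(t) := \max_x|w(t,x)|$ gives
\begin{align*}
    \phi(t) \le C(|\kappa_1-\kappa_2| + \|\mu_1-\mu_2\|_\infty)(1+T) + C'\int_t^T \phi(s)\,ds.
\end{align*}
The backward Gronwall inequality then yields the claim with constant $C(1+T)e^{C'T}$.

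The main obstacle is the uniform-in-$(\eta,\kappa)$ bound on $\Delta_x u$ needed to get a global Lipschitz constant for $H$, because Assumption~\ref{asmp:hamiltonian-hess} only controls derivatives of $H$ on $[-W,W]$. I would obtain it from the control representation, using boundedness of $f$ on compact $\A$. An alternative is to note directly that $\gamma^*\in\A^d_{-x}$ is bounded, so $H$ is globally Lipschitz in $p$ as a minimum of affine functions with bounded slopes.
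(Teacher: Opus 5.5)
Your proposal is correct and follows the paper's proof essentially step for step. You split the terminal datum the same way, combine Lipschitz bounds on $H$ and $F$ with $|\Delta_x w|\le C\max_y|w(t,y)|$, and close with a backward Gronwall inequality. Your remarks on the global Lipschitz property of $H$ (a minimum of affine functions of $p$ with slopes in the compact set $\A$) and on needing a $\kappa$-uniform $\eta$-Lipschitz constant for $g_\kappa$ make explicit what the paper uses silently.

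On that last point, keep the uniform constant as an added hypothesis (e.g., continuity of $\kappa\mapsto D_\eta g_\kappa$) rather than deducing it from compactness of $\calK$. Sup-norm Lipschitz dependence on $\kappa$ does not bound these constants uniformly, because the Lipschitz seminorm is only lower semicontinuous under uniform convergence.
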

\begin{proof}
We proceed as in~\cite{cec-pel2019}, taking $u := u_1 - u_2$ and $\mu = \mu_1 - \mu_2$. The pair $(u, \mu)$ then solves the system
\begin{align}
    \begin{split}\label{eqn:mfg-diff}
    &\frac{d}{dt} u(t,x) + \bar H(x, \mu_1(t), \Delta_x u_1 (t,\cdot)) -  \bar H(x, \mu_2(t), \Delta_x u_2 (t,\cdot)) = 0, \qquad (t,x) \in [0,T] \times [d],\\ 
    &\frac{d}{dt} \mu (t,x) = \sum_{y\in [d]} [\mu_1 (t,y) \gamma^*_x(y,\Delta_y u_1 (t,\cdot)) - \mu_2 (t,y) \gamma^*_x(y,\Delta_y u_2 (t,\cdot))], \qquad (t,x) \in [0,T] \times [d] ,\\ 
    &\mu (0, x) = \eta_1(x) - \eta_2(x), \qquad x \in [d], \\ 
    &u(T,x) = g_{\kappa_1}(x,\mu_1 (T)) - g_{\kappa_2}(x, \mu_2(T)), \qquad x \in [d].
\end{split}
\end{align}
To begin, we integrate the backward-in-time HJB equation in~\eqref{eqn:mfg-diff} over the interval $[t, T]$, where $t \in [0, T]$ to obtain
\begin{align*}
    u(t, x) = g_{\kappa_1}(x, \mu_1(T)) - g_{\kappa_2}(x, \mu_2(T)) + \int_t^T  \left[\bar H(x, \mu_1(s), \Delta_x u_1 (s,\cdot)) -  \bar H(x, \mu_2(s), \Delta_x u_2 (s,\cdot)) \right]ds
\end{align*}
Observe that
\begin{align*}
    |g_{\kappa_1}(x, \mu_1(T)) - g_{\kappa_2}(x, \mu_2(T))| &= |g_{\kappa_1}(x, \mu_1(T)) - g_{\kappa_2}(x, \mu_1(T)) + g_{\kappa_2}(x, \mu_1(T)) - g_{\kappa_2}(x, \mu_2(T))| \\
    &\leq C(|\kappa_1 - \kappa_2| + |\mu_1(T) - \mu_2(T)|) \\
    &\leq C(|\kappa_1 - \kappa_2| + \|\mu_1 - \mu_2\|_\infty).
\end{align*}
leveraging both Assumption~\ref{asmp:param-terminal-cost} and the fact that $g_{\kappa}(x, \cdot) \in \calC^1(\calP([d]))$ so that $g_{\kappa_2}$ is Lipschitz in its second input. Now, recall that
\begin{align*}
    \bar H(x, \eta, b) = H(x, b) + F(x, \eta),
\end{align*}
with $H$ Lipschitz in $b$ and $F$ Lipschitz in $\eta$ under Assumptions~\ref{asmp:mfg-uniqueness} and ~\ref{asmp:mfg-uniqueness}. Consequently, we have that
\begin{align*}
    |\bar H(x, \mu_1(s), \Delta_x u_1 (s,\cdot)) -  \bar H(x, \mu_2(s), \Delta_x u_2 (s,\cdot))| &\leq C\left(|\mu_1(s) - \mu_2(s)| + | \Delta_x u_1 (s,\cdot) -  \Delta_x u_2 (s,\cdot)|\right) \\
    &\leq C(|\mu_1(s) - \mu_2(s)| + \max_{x \in [d]}|u(s, x)|),
\end{align*}
recognizing that
\begin{align*}
    |\Delta_x u_1 (s,\cdot) -  \Delta_x u_2 (s,\cdot)|^2 = |\Delta_x u(s, \cdot)|^2 = \sum_{y \in [d]} (u(s, y) - u(s, x))^2 \leq 3d \max_{x \in [d]} |u(s, x)|^2.
\end{align*}
Taking absolute values and the maximum over $x \in [d]$ of the integrated HJB equation, we are left with
\begin{align*}
    \max_{x \in [d]} |u(t, x)| &\leq C(|\kappa_1 - \kappa_2| + \|\mu_1 - \mu_2\|_\infty) + C \int_t^T |\mu_1(s) - \mu_2(s)| ds + C \int_t^T \max_{x \in [d]} |u(s, x)| ds \\
    &\leq C(|\kappa_1 - \kappa_2| + \|\mu_1 - \mu_2\|_\infty) + C \int_t^T \max_{x \in [d]} |u(s, x)| ds.
\end{align*}
Applying a reversed version of Gronwall's inequality, we obtain
\begin{align*}
    \max_{x \in [d]} |u(t, x)| \leq C(|\kappa_1 - \kappa_2| + \|\mu_1 - \mu_2\|_\infty)
\end{align*}
for all $t \in [0, T]$ so that
\begin{align*}
    \|u_1 - u_2\|_\infty \leq C(|\kappa_1 - \kappa_2| + \|\mu_1 - \mu_2\|_\infty),
\end{align*}
taking the supremum over $t \in [0, T]$.
\end{proof}
Next, we require an estimate on  the difference $\|\mu_1 - \mu_2\|_\infty$; this time, the argument from~\cite{cec-pel2019} applies without modification.
\begin{lemma}
\label{lem:mu-intermediate-bound}
Under the same assumptions as in Lemma~\ref{lem:u-stability-term-condition}, the difference in measures satisfies
\begin{align*}
    \|\mu_1 - \mu_2\|_\infty \leq C|\eta_1 - \eta_2| + C \int_0^T \sqrt{\sum_{x \in [d]} |\Delta_x (u_1 - u_2)(s, \cdot)|^2 \mu_1(s, x)} ds.
\end{align*}
\end{lemma}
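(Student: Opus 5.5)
Set $\mu := \mu_1 - \mu_2$ and work directly with the forward KFP difference equation in~\eqref{eqn:mfg-diff}. Integrating from $0$ to $t$ gives
\begin{align*}
\mu(t,x) = \eta_1(x) - \eta_2(x) + \int_0^t \sum_{y \in [d]} \big[\mu_1(s,y)\gamma^*_x(y,\Delta_y u_1(s,\cdot)) - \mu_2(s,y)\gamma^*_x(y,\Delta_y u_2(s,\cdot))\big]\, ds .
\end{align*}
Inside the sum we add and subtract $\mu_1(s,y)\gamma^*_x(y,\Delta_y u_2(s,\cdot))$. This splits the integrand into two parts:
\begin{align*}
\text{(I)} &= \sum_y \mu_1(s,y)\big[\gamma^*_x(y,\Delta_y u_1) - \gamma^*_x(y,\Delta_y u_2)\big],\\
\text{(II)} &= \sum_y \mu(s,y)\,\gamma^*_x(y,\Delta_y u_2).
\end{align*}

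Term (II) is the easy one. The rates $\gamma^*$ take values in the compact set $\A$, and the diagonal entry is minus the sum of the off-diagonal ones, so $|\gamma^*| \le C$. Hence $|\text{(II)}| \le C \max_y |\mu(s,y)|$.

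Term (I) is where the square-root structure in the statement comes from. The map $\gamma^* = D_pH$ is Lipschitz in $p$ on the relevant bounded set: it is locally Lipschitz by Assumption~\ref{asmp:hamiltonian-hess}, and $u_1,u_2$ are uniformly bounded because the data lie in compact sets. This gives $|\gamma^*_x(y,\Delta_y u_1) - \gamma^*_x(y,\Delta_y u_2)| \le C|\Delta_y(u_1-u_2)(s,\cdot)|$. We then apply Cauchy--Schwarz with weights $\mu_1(s,y)$, using $\sum_y \mu_1(s,y) = 1$:
\begin{align*}
\sum_y \mu_1(s,y)\,|\Delta_y(u_1-u_2)(s,\cdot)| \le \Big(\sum_y |\Delta_y (u_1-u_2)(s,\cdot)|^2 \mu_1(s,y)\Big)^{1/2}.
\end{align*}
This is exactly the integrand in the claimed bound, up to renaming the summation index. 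Keeping the weights $\mu_1$ here, rather than bounding crudely by $\|u_1-u_2\|_\infty$, is the main point. In~\cite{cec-pel2019} this weighted quantity is later controlled by the Lasry--Lions duality argument, so the form of the bound must be preserved.

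Combining the two estimates and taking the maximum over $x$, we get for all $t\in[0,T]$
\begin{align*}
\max_x|\mu(t,x)| \le C|\eta_1-\eta_2| + C\int_0^t \Big(\sum_y |\Delta_y (u_1-u_2)(s,\cdot)|^2\mu_1(s,y)\Big)^{1/2} ds + C\int_0^t \max_x|\mu(s,x)|\,ds.
\end{align*}
The forward Gronwall inequality removes the last integral. We may extend the upper limit of the first integral to $T$, since the integrand is nonnegative. Taking the supremum over $t$ then yields the stated bound with a constant depending only on $T$, $d$ and the Lipschitz constants. The only delicate step is justifying the uniform Lipschitz bound on $\gamma^*$: it requires an a priori bound on $\|u_i\|_\infty$, uniform over $\kappa\in\calK$, which follows from the integrated HJB equation, the boundedness of $g_\kappa$ on the compact set $\calK\times\calP([d])$, and Gronwall's inequality.
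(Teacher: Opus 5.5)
Your proof is correct and follows essentially the same route as the paper, which only says to integrate the forward Kolmogorov equation in \eqref{eqn:mfg-diff} and defers to \cite[Proposition 5]{cec-pel2019}; the ingredients there are the ones you use: the add-and-subtract split with $\mu_1$-weights on the $\gamma^*$ difference, $|\gamma^*|\le C$, the local Lipschitz property of $\gamma^*=D_pH$, weighted Cauchy--Schwarz pointwise in time, and forward Gronwall. Your remark that the Lipschitz constant of $\gamma^*$ must be made uniform through an a priori bound on $\|u_i\|_\infty$ fills in a detail the paper leaves implicit; the Gronwall step there only needs $H$ to grow at most linearly in $p$, which holds because the rates lie in the compact set $\A$.
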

\begin{proof}
This estimate follows by integrating the (forward) Kolmogorov equation for $\mu$ from~\eqref{eqn:mfg-diff}; see \cite[Proposition 5]{cec-pel2019} for details, which carry over verbatim to our setting.
\end{proof}
Equipped with both of the previous lemmata, we proceed to bound $\|u\|_\infty$ and $\|\mu\|_\infty$ in terms of the initial-terminal data $(\eta_1, \kappa_1)$ and $(\eta_2, \kappa_2)$.
\begin{lemma}
\label{lem:final-stability}
Let $(u_1, \mu_1)$ and $(u_2, \mu_2)$ solve the MFG system in~\eqref{eqn:mfg} with data $(\eta_1, g_{\kappa_1})$ and $(\eta_2, g_{\kappa_2})$ respectively, with $\eta_1, \eta_2 \in \calP([d])$ and $\kappa_1, \kappa_2 \in \calK \subset \R^k$. If Assumptions~\ref{asmp:mfg-uniqueness}--\ref{asmp:param-terminal-cost} hold, then there exists a constant $C > 0$ such that
\begin{align}
    &\|\mu_1(t, x) - \mu_2(t, x)\|_\infty \leq C (|\eta_1 - \eta_2| + |\kappa_1 - \kappa_2|), \\
    &\|u_1 - u_2\|_\infty \leq C (|\eta_1 - \eta_2| + |\kappa_1 - \kappa_2|)
\end{align}
\end{lemma}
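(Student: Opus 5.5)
The plan is the Lasry--Lions duality argument of \cite[Proposition 5]{cec-pel2019}, adapted to the extra terminal mismatch $g_{\kappa_1}-g_{\kappa_2}$. With $u=u_1-u_2$ and $\mu=\mu_1-\mu_2$ solving \eqref{eqn:mfg-diff}, I compute $\frac{d}{dt}\sum_x u(t,x)\mu(t,x)$ and integrate over $[0,T]$. This gives
\begin{align*}
\sum_x u(T,x)\mu(T,x) - \sum_x u(0,x)\mu(0,x) = -\int_0^T \sum_x \big(F(x,\mu_1)-F(x,\mu_2)\big)\mu(s,x)\,ds - \int_0^T R(s)\,ds,
\end{align*}
where $R(s)$ collects the Hamiltonian terms. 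Writing $R$ as $\sum_x \mu_1(x)[H(x,\Delta_x u_2)-H(x,\Delta_x u_1)-\gamma^*(x,\Delta_x u_1)\cdot\Delta_x(u_2-u_1)]$ plus the symmetric term weighted by $\mu_2$, and using $\gamma^*=D_pH$ with the strict concavity $D^2_{pp}H\le -C_{2,H}$ from Assumption~\ref{asmp:hamiltonian-hess}, I get
\begin{align*}
R(s)\ \ge\ c\sum_x(\mu_1+\mu_2)(s,x)\,|\Delta_x u(s,\cdot)|^2 .
\end{align*}
Monotonicity of $F$ makes the $F$-term nonpositive on the right-hand side.

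For the terminal term, I split $u(T)=[g_{\kappa_1}(\cdot,\mu_1(T))-g_{\kappa_1}(\cdot,\mu_2(T))]+[g_{\kappa_1}(\cdot,\mu_2(T))-g_{\kappa_2}(\cdot,\mu_2(T))]$. The first bracket paired with $\mu(T)$ is $\ge 0$ by Lasry--Lions monotonicity of $g_{\kappa_1}$. The second is bounded by $C|\kappa_1-\kappa_2|\,\|\mu\|_\infty$ by Assumption~\ref{asmp:param-terminal-cost}. The initial term is bounded by $\|u\|_\infty|\eta_1-\eta_2|$. Combining these yields
\begin{align*}
c\int_0^T\sum_x\mu_1(s,x)|\Delta_x u(s,\cdot)|^2ds \le \|u\|_\infty|\eta_1-\eta_2| + C|\kappa_1-\kappa_2|\,\|\mu\|_\infty .
\end{align*}

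Next I insert this into Lemma~\ref{lem:mu-intermediate-bound}, using Cauchy--Schwarz in time to bound $\int_0^T\sqrt{\cdot}$ by $\sqrt{T}(\int_0^T\cdot)^{1/2}$. This gives
\begin{align*}
\|\mu\|_\infty \le C|\eta_1-\eta_2| + C\big(\|u\|_\infty|\eta_1-\eta_2| + |\kappa_1-\kappa_2|\,\|\mu\|_\infty\big)^{1/2}.
\end{align*}
Lemma~\ref{lem:u-stability-term-condition} bounds $\|u\|_\infty\le C(|\kappa_1-\kappa_2|+\|\mu\|_\infty)$. Setting $\delta=|\eta_1-\eta_2|+|\kappa_1-\kappa_2|$, the bracket under the square root is then at most $C\delta(\delta+\|\mu\|_\infty)$. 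Young's inequality gives $\sqrt{C\delta\|\mu\|_\infty}\le \tfrac12\|\mu\|_\infty + C'\delta$, so the $\|\mu\|_\infty$ term can be absorbed into the left-hand side. This yields $\|\mu\|_\infty\le C\delta$, and then Lemma~\ref{lem:u-stability-term-condition} gives $\|u\|_\infty\le C\delta$.

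The main obstacle I expect is the strong-concavity lower bound on $R(s)$, specifically justifying the Taylor expansion with the uniform constant $C_{2,H}$. Assumption~\ref{asmp:hamiltonian-hess} only provides this on a bounded set $[-W,W]$. So I would first use the a priori bound $\|u_i\|_\infty\le C$, which is uniform over $\kappa\in\calK$ by compactness and Assumption~\ref{asmp:param-terminal-cost}, to ensure that $\Delta_x u_i$ stays in that set. I would also need to handle the weighting by $\mu_1+\mu_2$ rather than $\mu_1$ alone, which is harmless because the lower bound only needs to dominate the $\mu_1$-weighted term required in Lemma~\ref{lem:mu-intermediate-bound}.
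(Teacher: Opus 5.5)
Your proposal is correct and follows the paper's proof essentially step for step: the pairing $\langle u,\mu\rangle$, Lasry--Lions monotonicity of $g_{\kappa_1}$ and $F$ together with Assumption~\ref{asmp:param-terminal-cost} for the terminal mismatch, strong concavity via $\gamma^*=D_pH$, and then Lemmas~\ref{lem:mu-intermediate-bound} and~\ref{lem:u-stability-term-condition}; substituting the $u$-bound under the square root before applying Young is a slightly cleaner way to close than the paper's two successive Young steps. One small slip: the bracket you display for $R$ has the wrong sign (each bracket as written is $\le -\tfrac12 C_{2,H}|\Delta_x u|^2$ by concavity), and the correct $R$ is its negative, $\sum_x\mu_1(x)\big[H(x,\Delta_x u_1)-H(x,\Delta_x u_2)-\gamma^*(x,\Delta_x u_1)\cdot\Delta_x u\big]$ plus the analogous $\mu_2$-weighted term, which is what gives $R\ge c\sum_x(\mu_1+\mu_2)|\Delta_x u|^2$.
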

As a direct corollary, we can extend this stability result to obtain Lipschitz continuity of the flow map $\Phi : [0, T] \times \calP([d]) \times \calK \to \R^d$.
\begin{proof}[Proof of Lemma~\ref{lem:final-stability}]
Taking $\phi(t) = \langle u(t, \cdot), \mu(t, \cdot) \rangle$, we see that
\begin{align*}
    \phi'(t) &= \sum_{x \in [d]} u(t, x) \frac{d\mu}{dt}(t, x) + \sum_{x \in [d]} \frac{du}{dt}(t, x) \mu(t, x) \\
    &= \sum_{x \in [d]} \sum_{y \in [d]} [\mu_1 (t,y) \gamma^*_x(y,\Delta_y u_1 (t,\cdot)) - \mu_2 (t,y) \gamma^*_x(y,\Delta_y u_2 (t,\cdot))] (u_1(t, x) - u_2(t, x)) \\
    &\quad + \sum_{x \in [d]} [\bar H(x, \mu_2(t), \Delta_x u_1 (t,\cdot)) - \bar H(x, \mu_1(t), \Delta_x u_2 (t,\cdot))](\mu_1(t, x) - \mu_2(t, x)).
\end{align*}
Integrating over the interval $[0, T]$, we obtain
\begin{align*}
    \phi(T) - \phi(0) &= \int_0^T \left[\sum_{x \in [d]} \sum_{y \in [d]} [\mu_1 (t,y) \gamma^*_x(y,\Delta_y u_1 (t,\cdot)) - \mu_2 (t,y) \gamma^*_x(y,\Delta_y u_2 (t,\cdot))] (u_1(t, x) - u_2(t, x))\right]dt \\
    &\quad + \int_0^t \left[\sum_{x \in [d]} [\bar H(x, \mu_2(t), \Delta_x u_2 (t,\cdot)) - \bar H(x, \mu_1(t), \Delta_x u_1 (t,\cdot))](\mu_1(t, x) - \mu_2(t, x))\right]dt.
\end{align*}
In the first integral, we observe that under Assumption~\ref{asmp:mfg-uniqueness}, we have that
\begin{align*}
    \sum_{x \in [d]} \gamma_x^*(y, \cdot) = 0.
\end{align*}
As a result, we can interchange the order of summation to obtain
\begin{align*}
    &\sum_{x \in [d]} \sum_{y \in [d]} [\mu_1 (t,y) \gamma^*_x(y,\Delta_y u_1 (t,\cdot)) - \mu_2 (t,y) \gamma^*_x(y,\Delta_y u_2 (t,\cdot))] (u_1(t, x) - u_2(t, x)) \\
    &\quad = \sum_{y \in [d]} \sum_{x \in [d]}  [\mu_1 (t,y) \gamma^*_x(y,\Delta_y u_1 (t,\cdot)) - \mu_2 (t,y) \gamma^*_x(y,\Delta_y u_2 (t,\cdot))] (u_1(t, x) - u_1(t, y) + u_2(t, y) - u_2(t, x)) \\
    &\quad = \sum_{y \in [d]} \sum_{x \in [d]}  [\mu_1 (t,y) \gamma^*_x(y,\Delta_y u_1 (t,\cdot)) - \mu_2 (t,y) \gamma^*_x(y,\Delta_y u_2 (t,\cdot))] \Delta_y u(t, x) \\
    &\quad = \sum_{x \in [d]} \sum_{y \in [d]}  [\mu_1 (t,x) \gamma^*_y(x,\Delta_x u_1 (t,\cdot)) - \mu_2 (t,x) \gamma^*_y(x,\Delta_y u_2 (t,\cdot))] \Delta_x u(t, y) \\
    &\quad = \sum_{x \in [d]} \Delta_x u \cdot [\mu_1(t, x) \gamma^*(x, \Delta_x u_1(t, \cdot)) - \mu_2(t, x) \gamma^*(x, \Delta_x u_2(t, \cdot))],
\end{align*}
switching the role of $x$ and $y$ in the fourth line for notational consistency below. With this, we see that
\begin{align}
\begin{split}
\label{eqn:expanded-phi-integral}
    &\sum_{x \in [d]} (g_{\kappa_1}(x,\mu_1 (T)) - g_{\kappa_2}(x, \mu_2(T)))[\mu_1(T, x) - \mu_2(T, x)] \\
    &\quad = \sum_{x \in [d]} (u_1(0, x) - u_2(0, x))[\eta_1(x) - \eta_2(x)] \\
    &\quad + \int_0^T \left[\sum_{x \in [d]} [\bar H(x, \mu_2(t), \Delta_x u_2 (t,\cdot)) - \bar H(x, \mu_1(t), \Delta_x u_1 (t,\cdot))](\mu_1(t, x) - \mu_2(t, x))\right]dt \\
    &\quad + \int_0^T \left[\sum_{x \in [d]} \Delta_x u \cdot (\mu_1(t, x) \gamma^*(x, \Delta_x u_1(t, \cdot)) - \mu_2(t, x) \gamma^*(x, \Delta_x u_2(t, \cdot)))\right]dt.
\end{split}
\end{align}
At this point, we note that the lefthand side of the above equality can be decomposed as
\begin{align*}
    &\sum_{x \in [d]} (g_{\kappa_1}(x,\mu_1 (T)) - g_{\kappa_2}(x, \mu_2(T)))[\mu_1(T, x) - \mu_2(T, x)] \\
    &\quad = \sum_{x \in [d]} (g_{\kappa_1}(x, \mu_1(T)) - g_{\kappa_1}(x, \mu_2(T)) + g_{\kappa_1}(x, \mu_2(T)) -  g_{\kappa_2}(x, \mu_2(T)))[\mu_1(T, x) - \mu_2(T, x)] \\
    &\quad \geq \sum_{x \in [d]}(g_{\kappa_1}(x, \mu_2(T)) -  g_{\kappa_2}(x, \mu_2(T)))[\mu_1(T, x) - \mu_2(T, x)],
\end{align*}
invoking the fact that $g_{\kappa_1}$ is Lasry--Lions monotone; see Assumption~\ref{asmp:mfg-uniqueness}. Now, we use Assumption~\ref{asmp:param-terminal-cost} to bound
\begin{align*}
    &\left|\sum_{x \in [d]}(g_{\kappa_1}(x, \mu_2(T)) -  g_{\kappa_2}(x, \mu_2(T)))[\mu_1(T, x) - \mu_2(T, x)]\right| \\
    &\quad \leq \sum_{x \in [d]} |g_{\kappa_1}(x, \mu_2(T)) -  g_{\kappa_2}(x, \mu_2(T))||\mu_1(T, x) - \mu_2(T, x)| \\
    &\quad \leq C |\kappa_1 - \kappa_2| \|\mu_1 - \mu_2\|_\infty
\end{align*}
absorbing additional constants into $C$ as necessary (e.g., $C$ absorbs a factor of $d$ in the final line). In summary,
\begin{align*}
    \sum_{x \in [d]} (g_{\kappa_1}(x,\mu_1 (T)) - g_{\kappa_2}(x, \mu_2(T)))[\mu_1(T, x) - \mu_2(T, x)] \geq -C(|\kappa_1 - \kappa_2|^2 + \|\mu_1 - \mu_2\|_\infty^2).
\end{align*}
On the other hand, observe that
\begin{align*}
    &\sum_{x \in [d]}\left[\bar H(x, \mu_2(t), \Delta_x u_2 (t,\cdot)) - \bar H(x, \mu_1(t), \Delta_x u_1 (t,\cdot))](\mu_1(t, x) - \mu_2(t, x))\right] \\
    &\quad = \sum_{x \in [d]} [H(x, \Delta_x u_2 (t,\cdot)) - H(x, \Delta_x u_1 (t,\cdot))(\mu_1(t, x) - \mu_2(t, x))] \\
    &\qquad - \sum_{x \in [d]} (F(x, \mu_1(t)) - F(x, \mu_2(t)))(\mu_1(t, x) - \mu_2(t, x)) \\
    &\quad \leq \sum_{x \in [d]} [H(x, \Delta_x u_2 (t,\cdot)) - H(x, \Delta_x u_1 (t,\cdot))(\mu_1(t, x) - \mu_2(t, x))],
\end{align*}
recalling that $F$ also satisfies the Lasry--Lions monotonicity assumption from Assumption~\ref{asmp:mfg-uniqueness}. Now, \cite[Proposition~1]{Gomes2013} implies that
\begin{align}\label{eqn:DH}
    \gamma^*(x,p) = D_p H(x,p).
\end{align}
From this, we have that
\begin{align*}
    \gamma^*(x, \Delta_x u_i(t, \cdot)) = D_p H(x, \Delta_x u_i(t, \cdot)), \quad i = 1, 2,
\end{align*}
allowing us to write
\begin{align*}
    &H(x, \Delta_x u_2 (t,\cdot)) - H(x, \Delta_x u_1(t, \cdot)) + \Delta_x u \cdot \gamma^*(x, \Delta_x u_1(t, \cdot)) \\
    &\quad = H(x, \Delta_x u_2 (t,\cdot)) - [H(x, \Delta_x u_1(t, \cdot)) + (\Delta_x u_2 - \Delta_x u_1) \cdot D_p H(x, \Delta_x u_1(t, \cdot)] \\
    &\quad \leq -C_{2, H} |\Delta_x u|^2
\end{align*}
by Assumption~\eqref{asmp:mfg-uniqueness}. Namely, the Hessian $D_{pp}^2 H(x, p)$ exists and satisfies the bound $D_{pp}^2 H(x, p) \leq -C_{2, H}$ for some constant $C_{2, H} \geq 0$ under our strict concavity assumption. By the same reasoning, we observe that 
\begin{align*}
    &H(x, \Delta_x u_1 (t,\cdot)) - H(x, \Delta_x u_2(t, \cdot)) - \Delta_x u \cdot \gamma^*(x, \Delta_x u_2(t, \cdot)) \\
    &\quad = H(x, \Delta_x u_1 (t,\cdot)) - [H(x, \Delta_x u_2(t, \cdot)) + (\Delta_x u_1 - \Delta_x u_2) \cdot D_p H(x, \Delta_x u_2(t, \cdot)] \\
    &\quad \leq -C_{2, H} |\Delta_x u|^2.
\end{align*}
Thus, returning to~\eqref{eqn:expanded-phi-integral}, we have that
\begin{align*}
    -C(|\kappa_1 - \kappa_2| \|\mu_1 - \mu_2\|_\infty) &\leq \sum_{x \in [d]} (u_1(0, x) - u_2(0, x)) [\eta_1(x) - \eta_2(x)] \\
    &\quad - C\int_0^T \sum_{x \in [d]} |\Delta_x u(s, \cdot)|^2 (\mu_1(s, x) + \mu_2(s, x)) ds
\end{align*}
Upon rearrangement, and an application of the Cauchy--Schwarz inequality to the first term on the righthand side of the above inequality, it follows that
\begin{align*}
    \int_0^T \sum_{x \in [d]} |\Delta_x u(s, \cdot)|^2 (\mu_1(s, x) + \mu_2(s, x)) ds \leq C(\|u\|_\infty |\eta_1 - \eta_2| + |\kappa_1 - \kappa_2| \|\mu\|_\infty)
\end{align*}
for some constant $C > 0$. Now, invoking Lemma~\ref{lem:mu-intermediate-bound}, the Cauchy--Schwarz inequality, and the fact that $\mu_2(s, x) \geq 0$ for all $s \in [0, T]$ and $x \in [d]$, we have that
\begin{align*}
    \|\mu\|_\infty &\leq C|\eta_1 - \eta_2| + C \int_0^T \sqrt{\sum_{x \in [d]} |\Delta_x u(s, \cdot)|^2 \mu_1(s, x)} ds \\
    &\leq C|\eta_1 - \eta_2| + C \sqrt{\int_0^T \sum_{x \in [d]} |\Delta_x u(s, \cdot)|^2 \mu_1(s, x) ds} \\
    &\leq C(|\eta_1 - \eta_2| + \sqrt{\|u\|_\infty |\eta_1 - \eta_2| + |\kappa_1 - \kappa_2| \|\mu\|_\infty}) \\
    &\leq C(|\eta_1 - \eta_2| + \|u\|_\infty^{1/2} |\eta_1 - \eta_2|^{1/2} + |\kappa_1 - \kappa_2|^{1/2} \|\mu\|_\infty^{1/2}),
\end{align*}
recalling that $\sqrt{a} + \sqrt{b} \geq \sqrt{a + b}$ for any $a, b \geq 0$. Now, recall that for any $a, b \geq 0$ and $\varepsilon > 0$, we also have that
\begin{align*}
    ab \leq \varepsilon a^2 + \frac{1}{4 \varepsilon} b^2.
\end{align*}
Applying this inequality once with $\varepsilon = \frac{1}{2C}$ , we see that
\begin{align*}
    \|\mu\|_\infty \leq C(|\eta_1 - \eta_2| + |\kappa_1 - \kappa_2| + \|u\|_\infty^{1/2} |\eta_1 - \eta_2|^{1/2}) + \frac{1}{2}\|\mu\|_\infty
\end{align*}
taking $C > 0$ larger if necessary. Applying the same inequality again with $\varepsilon = \frac{1}{4C^2}$ and rearranging, it follows that
\begin{align}
\label{eqn:mu-almost-done-bound}
    \|\mu\|_\infty \leq C(|\eta_1 - \eta_2| + |\kappa_1 - \kappa_2|) + \frac{1}{2C}\|u\|_\infty.
\end{align}
Plugging this into the result of Lemma~\ref{lem:u-stability-term-condition} and rearranging yields
\begin{align}
\label{eqn:u-almost-done-bound}
    \|u\|_\infty \leq C(|\eta_1 - \eta_2| + |\kappa_1 - \kappa_2|),
\end{align}
and plugging~\eqref{eqn:u-almost-done-bound} into~\eqref{eqn:mu-almost-done-bound} results in
\begin{align*}
    \|\mu\|_\infty \leq C(|\eta_1 - \eta_2| + |\kappa_1 - \kappa_2|)
\end{align*}
as claimed.
\end{proof}
To conclude, we can present the proof of our main theorem, which follows almost immediately from the preceding results.
\begin{proof}[Proof of Theorem~\ref{thm:flow-time-regularity}]
Observe that we can write
\begin{align*}
    |\Phi(t, \eta_1, \kappa_1) - \Phi(s, \eta_2, \kappa_2)| &= |\Phi(t, \eta_1, \kappa_1) - \Phi(s, \eta_1, \kappa_1) + \Phi(s, \eta_1, \kappa_1) - \Phi(s, \eta_2, \kappa_2)| \\
    &\leq |\Phi(t, \eta_1, \kappa_1) - \Phi(s, \eta_1, \kappa_1)| + |\Phi(s, \eta_1, \kappa_1) - \Phi(s, \eta_2, \kappa_2)| \\
    &\leq C(|t - s| + |\eta_1 - \eta_2| + |\kappa_1 - \kappa_2|),
\end{align*}
invoking Lemma~\ref{lem:simplified-flow-regularity} to bound the first term and Lemma~\ref{lem:final-stability} to bound the second term.
\end{proof}

\begin{remark}\label{rem:references-theory}
    Although~\cite{Lanthaler2025} reference the approximation guarantee from \cite[Theorem 1]{Yarotsky2017} to show that their HJ-Net method evades the curse of parametric complexity, most existing guarantees on the generalization performance of ReLU neural networks require bounds on the \textit{weights} of the neural network rather than the size of the network. The well-known result from~\cite{Yarotsky2017}, however, only provides width and depth bounds on ReLU networks approximating a function with prescribed regularity. To this end, we pursue an alternative approach for obtaining approximation and generalization guarantees, based on the recent results of~\cite{Jiao2023}.
\end{remark}

\subsection{Proofs of Approximation and Generalization Guarantees}
\label{sec:proofs-appx-gen}
We conclude with proofs of Corollary~\ref{cor:appx-flow-map} and Corollary~\ref{cor:sample-complexity}, our approximation and generalization results respectively. Both follow almost directly from the corresponding results in~\cite{Jiao2023}, in Proposition~\ref{prop:jiao-approx-bound} and Proposition~\ref{prop:jiao-generalization-unregularized} respectively, but we include the necessary rescaling arguments here for the sake of completeness.

\begin{proof}[Proof of Corollary~\ref{cor:appx-flow-map}]
First, to extend from the setting of scalar regression, as is the case in Proposition~\ref{prop:jiao-approx-bound}, to vector-valued regression, we note that if a scalar Lipschitz function can be uniformly approximated up to an error $\varepsilon > 0$ by a network with weight bound $K$ and width $W$, then an $\R^d$ valued Lipschitz function $\Phi$ can be uniformly approximated by a network with weight bound $K$ and width $d W$. Indeed, we can simply approximate each coordinate of $\Phi$ by a network of width $W$ and stack the resulting networks to obtain the desired approximator, which will have width $d W$.

Now, by Theorem~\ref{thm:flow-time-regularity}, the flow map $\Phi : [0, T] \times \calP([d]) \times \calK \to \R^d$ belongs to $\calC^{0,1}([0, T] \times \calP([d])\times \calK)$. From this, we can apply Proposition~\ref{prop:jiao-approx-bound} directly upon scaling the domain $[0, T] \times \calP([d])\times \calK$ to lie entirely within the $(d + k +1)$-dimensional unit cube.

To carry out this scaling, we embed $\calP([d]) \hookrightarrow [0, 1]^d$, scale $\calK$ to lie in the set $[0,1]^k$, and scale the interval $[0, T]$ to lie in the interval $[0, 1]$. The natural embedding $\calP([d]) \hookrightarrow [0, 1]^d$ is simply given by viewing
\begin{align*}
    \calP([d]) = \left\{\eta \in \R^d : \sum_{i=1}^d \eta_i = 1, \quad \eta_i \geq 0 \text{ for all } i = 1, \ldots, d\right\}.
\end{align*}
This rescaling may incur constants that depend on the diameter of $\calK$, denoted by $\mathrm{diam}(\calK)$, and the final time $T$. Importantly, it is always possible for finite $T > 0$ and compact $\calK \subset \R^k$. The result then follows upon applying Proposition~\ref{prop:jiao-approx-bound}, replacing $d$ with $d + k + 1$ therein. As noted above, the universal constants $c, C > 0$ obtained in Proposition~\ref{prop:jiao-approx-bound} must also be replaced by constants $c(\mathrm{diam}(\calK), T), C(\mathrm{diam}(\calK), T) > 0$ that depends on $\calK$ and $T$.
\end{proof}

\begin{proof}[Proof of Corollary~\ref{cor:sample-complexity}]
This follows directly from Proposition~\ref{prop:jiao-generalization-unregularized} upon carrying out the same rescaling argument as in the previous proof, again replacing $d$ with $d + k + 1$ in the statement of the result. Again, we note that the universal constant $c > 0$ from Proposition~\ref{prop:jiao-generalization-unregularized} must be replaced by a constant $\Tilde{C}(\mathrm{diam}(\calK), T) > 0$ that can depend on $\calK$ and $T$.
\end{proof}

\section{Connection to Hamiltonian Flow}
\label{sec:hamiltonian-flow}
In this appendix, we expand upon the similarity between the MFG system and the characteristic ODEs that~\cite{Lanthaler2025} utilize to obtain parameter-efficient operator learning for first-order HJB equations. Consider an arbitrary first-order HJB equation on a bounded domain $\Omega \subseteq \R^d$, with Hamiltonian $H : \R^d \times \R^d \to \R$:
\begin{align}
\label{eqn:hj-equation}
    \begin{cases}
    \partial_t u + H(q, \nabla_q u) =0 & (x, t) \in \Omega \times (0, T], \\
    u(x, 0) = u_0(x) & x \in \Omega,
    \end{cases}
\end{align}
Instead of attempting to learn the operator that maps the initial data $u_0 \in C^r(\Omega)$ to $u \in C^r(\Omega \times [0, T])$, for instance,~\cite{Lanthaler2025} construct a scheme they label HJ-Net with the aim of learning the Hamiltonian flow (i.e., the characteristics of the HJB equation), which satisfies the ODE system
\begin{align}
\label{eqn:hj-flow}
    \begin{cases}
    \dot{q} = \nabla_p H(q, p) & q(0) = q_0,, \\
    \dot{p} = - \nabla_q H(q, p) & p(0) = p_0, \\
    \dot{z} = \calL(q, p) & z(0) = z_0.
    \end{cases}
\end{align}
Then, the flow map $\Psi_t : \Omega \times \R^d \times \R \to \Omega \times \R^d \times \R$, given by $(q_0, p_0, z_0) \mapsto (q(t), p(t), z(t))$ is such that $z(t) = u(q(t), t)$ and $p(t) = \nabla_q u(q(t), t)$ along the characteristics $(q(t), t)$. By learning the flow map $\Psi_t$, instead of the operator $u_0 \mapsto u$, and reconstructing the solution $u$ from the characteristics, \cite[Theorem 5.1]{Lanthaler2025} shows that the HJ-Net approach can beat the so-called curse of parametric complexity, enabling parameter-efficient operator learning for HJB equations. 

Observe, nonetheless, that there is a subtle but important difference between the Hamiltonian flow and the MFG system: the former is independent of the initial condition $u_0$ of Equation~\eqref{eqn:hj-equation}, while the latter \textit{depends} explicitly on the terminal condition $g_\kappa$. In the setting of~\cite{Lanthaler2025}, this enables parameter-efficient operator learning over initial conditions belonging to an infinite-dimensional Banach space, as the Hamiltonian flow map remains approximable by neural networks of bounded width and depth regardless of the space to which the initial conditions belong. Conversely, for finite-state MFGs, we must limit ourselves to parametrized terminal costs due to the dependence of the MFG system on the terminal cost. Indeed, the technical results in both our work and in~\cite{Lanthaler2025} rely upon reducing to a flow map between subsets of finite-dimensional Euclidean spaces, which is \textit{not} the case if we allow terminal costs to belong to an infinite-dimensional Banach space.

\section{Additional Numerical Experiments}
\label{sec:additional-exps}
We provide a comprehensive suite of additional numerical experiments for both the cybersecurity model and the quadratic model. As alluded to earlier (see also Appendix~\ref{sec:exp-details}), Fig.~\ref{fig:quad-cost-d=10-resnet} demonstrates the improvement in accuracy and reduced variance over trials that comes with a more powerful neural network architecture. In particular, we replicate the $d = 10$ results using a ResNet architecture, with layer normalization, skip connections between all layers, a dropout rate of $p = 0.05$. Moreover, the ResNet's first and layer layer have width $W_1 = 128$ while the middle two hidden layers have width $W_2 = 64$. We find that this ``bottleneck'' helps promote training stability, and Fig.~\ref{fig:quad-cost-d=10-resnet} demonstrates the effect that this architecture choice has on accuracy and variance (the latter is illustrated by smaller standard deviations about the mean of the five trials).

Next, Figure~\ref{fig:cs-paper-examples-time} and Figure~\ref{fig:cs-new-examples-time} provide additional evidence for the accuracy of our method on the cybersecurity model. Figure~\ref{fig:cs-model-errors} displays the errors obtained when learning the flow map for the cybersecurity model, in the same vein as Figure~\ref{fig:errors-quad-model} above. Similarly, Figures~\ref{fig:quad-cost-time-d=3}--\ref{fig:quad-cost-time-d=10} illustrate a variety of random tests for the quadratic model in dimensions $d = 3, 4, 5,$ and $10$. In Table~\ref{tab:loss_times}, we present statistics for the models used to produce Figures~\ref{fig:quad-cost-time-d=3}--\ref{fig:quad-cost-time-d=10} (as well as Figure~\ref{fig:examples-quad-model}), including average test losses on the held-out test set at the end of training and average training times. Figure~\ref{fig:mus-quad-model} is the analogue of Figure~\ref{fig:cs-large-cost-mus} for the quadratic model in various dimensions, illustrating that by solving the KFP equation with the learned value function, we can accurately recover the flow of measures for the quadratic model as well.

Finally, we include a handful of figures that learn an operator on a \textit{fixed} time discretization. Specifically, suppose that we discretize the time interval $[0, T]$ with $M$ time, yielding times $t_j = j T / M$ for $j = 0, \ldots, M$. Given a pair $(\eta_i, \kappa_i) \in \calP([d]) \times \calK$, one may instead attempt to learn the augmented flow map subordinate to the discretization, given by $ \Tilde{\Phi} : \calP([d]) \times \calK \to (\R^d)^{M + 1}$
\begin{align*}
    \Tilde{\Phi}(\eta_i, \kappa_i) \mapsto (u^{\eta_i, \kappa_i}(t_j))_{j=0}^M.
\end{align*}
In practice, this map can be learned using a slight modification of Algorithm~\ref{alg:mfg-sampling}, where the sampling step simply takes in a pair $\tilde{x}_i := (\eta_i, \kappa_i) \in \calP([d]) \times \calK$ and outputs the entire trajectory that Picard iteration produces as a label, given by $\tilde{y}_i := \Gamma_{g_{\kappa_i}}(\eta_i)$. Then, the pairs $\{(\tilde{x}_i, \tilde{y}_i)\}_{i=1}^n$ become our augmented training data, and we can proceed from Line 7 of Algorithm~\ref{alg:mfg-sampling} verbatim. Note that the augmented flow map $\tilde{\Phi}$ is less versatile than the flow map $\Phi$ from Section~\ref{subsec:flow-maps}, in the sense that $\Phi$ can be evaluated at \textit{any} time $t \in [0, T]$, while $\Tilde{\Phi}$ can only be evaluated along the given time discretization. However, given $M$ sufficiently large, learning the map $\Tilde{\Phi}$ to high precision still yields a useful estimate of the MFG equilibrium, so this modified method may still be of interest.

In Fig.~\ref{fig:cs-mf}, we present an example of the learned map value functions for the cybersecurity model, using the augmented procedure for a fixed time discretization with $M = 50$ points. In Figures~\ref{fig:quad-cost-d=3}--\ref{fig:quad-cost-d=20}, we provide similar experiments for the quadratic model in dimensions $d = 3, 4, 5, 10, 20$ respectively. Interestingly, the quality of the approximation and optimization stability does not appear to degrade as quickly with dimension, and using a discretization with $M = 10$ points, we are able to learn augmented flow maps to very high precision up to dimension $d = 20$.
\begin{table}[h]
\tiny
\centering
\caption{Statistics for high-dimensional quadratic model experiments. Test losses and training times are averaged over $5$ trials, and all networks had depth $L = 4$. The test losses are evaluated using smooth $L^1$ loss, summed over the test set.}
\label{tab:loss_times}
\begin{tabular}{cccccc}
\hline
\textbf{Dimension $d$} &
\textbf{Average Test Loss} &
\textbf{Average Training Time (s)} &
\textbf{Training Samples} &
\textbf{Epochs} &
\textbf{Width} \\
\hline
3  & 0.000831 & 233.42 & 4000 & 2000 & 64 \\
4  & 0.00200 & 219.68 & 4000 & 2000 & 64 \\
5  & 0.00527 & 220.24 & 4000 & 2000 & 64 \\
10 & 0.0208 & 374.10 & 10000 & 500 & 128 \\
\hline
\end{tabular}
\end{table}

\begin{figure}[h]
    \centering
    \includegraphics[width=\linewidth]{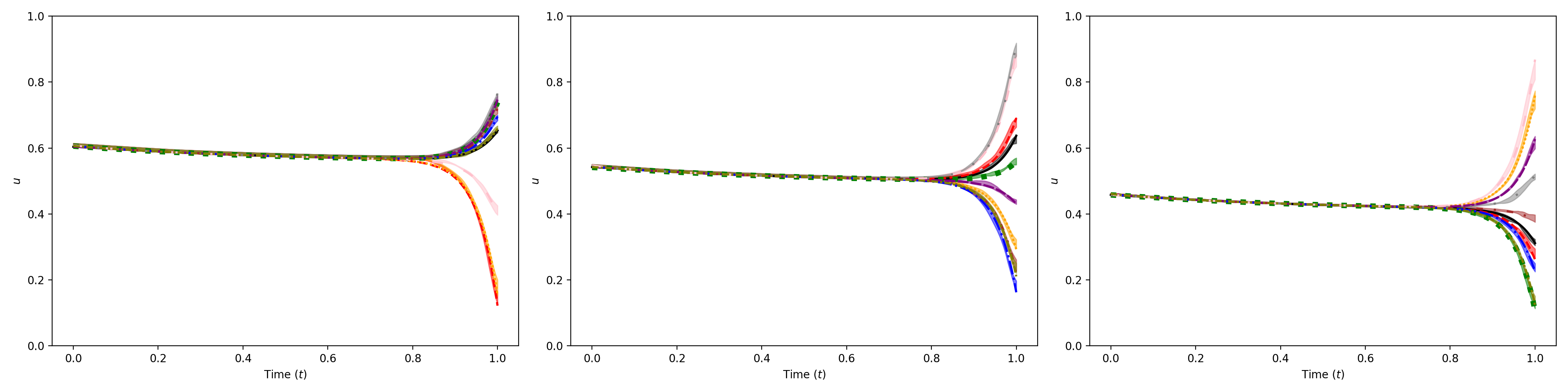}
    \caption{Learned value functions in the same setting as Fig.~\ref{fig:quad-cost-d=10}, using a ResNet architecture with dropout, layer normalization, and an hidden layer width of $64$.}
    \label{fig:quad-cost-d=10-resnet}
\end{figure}

\begin{figure}[h]
    \centering
    \includegraphics[width=\linewidth]{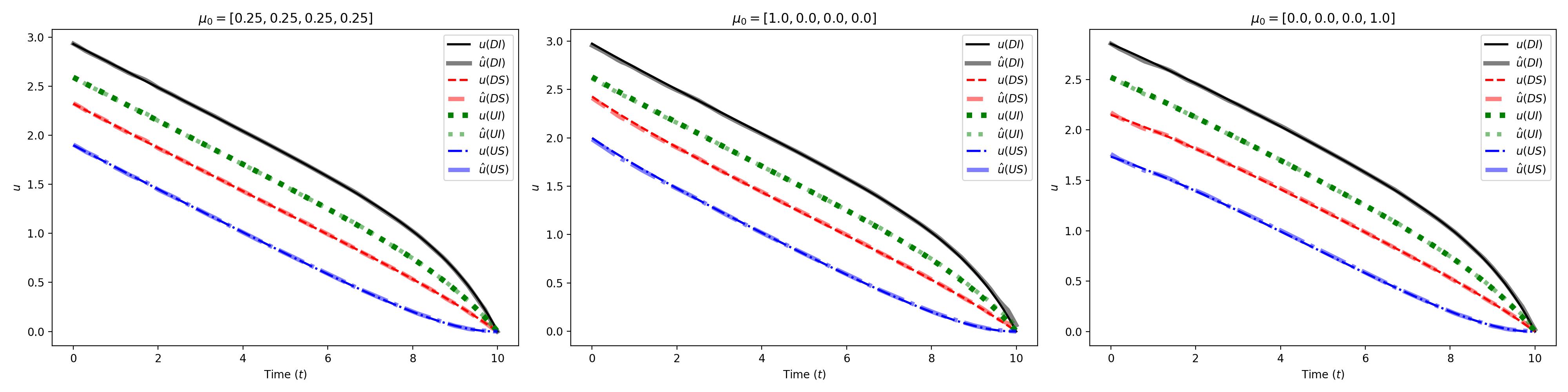}
    \caption{Learned value functions, denoted by $\widehat{u}$, for $\kappa = 0$ and initial distribution $\mu_1 = [0.25, 0.25, 0.25, 0.25]$, $\mu_2 = [1, 0, 0, 0]$, and $\mu_3 = [0, 0, 0, 1]$, respectively. In particular, our method can still perform accurately in the event that the parametrization of the underlying MFG is fixed and only the initial distribution varies.}
    \label{fig:cs-paper-examples-time}
\end{figure}

\begin{figure}[h]
    \centering
    \includegraphics[width=\linewidth]{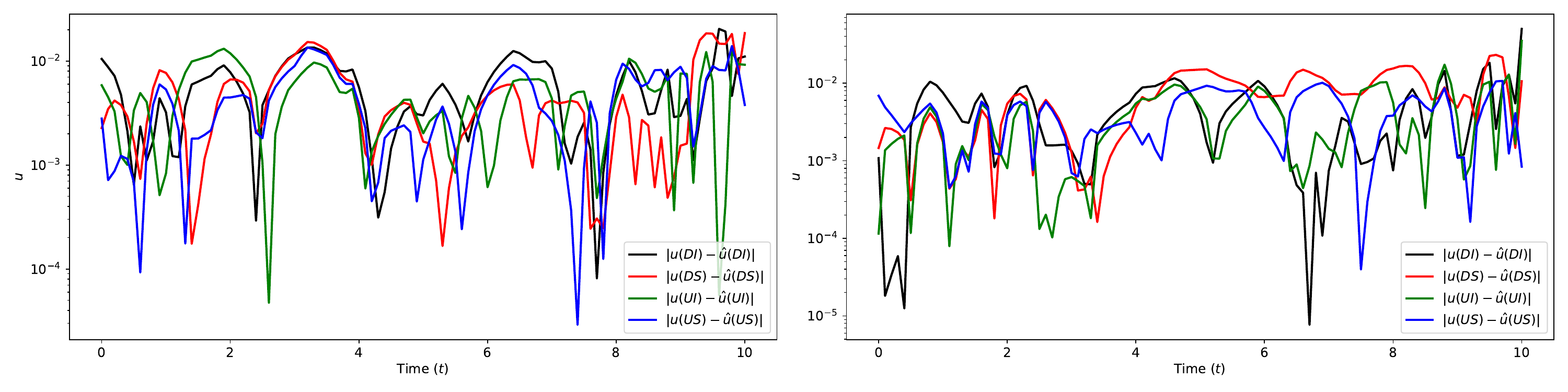}
    \caption{Error, measured as absolute difference across all times between true and learned value function, on two randomly generated instances of the cybersecurity model.}
    \label{fig:cs-model-errors}
\end{figure}

\begin{figure}[h]
    \centering
    \includegraphics[width=\linewidth]{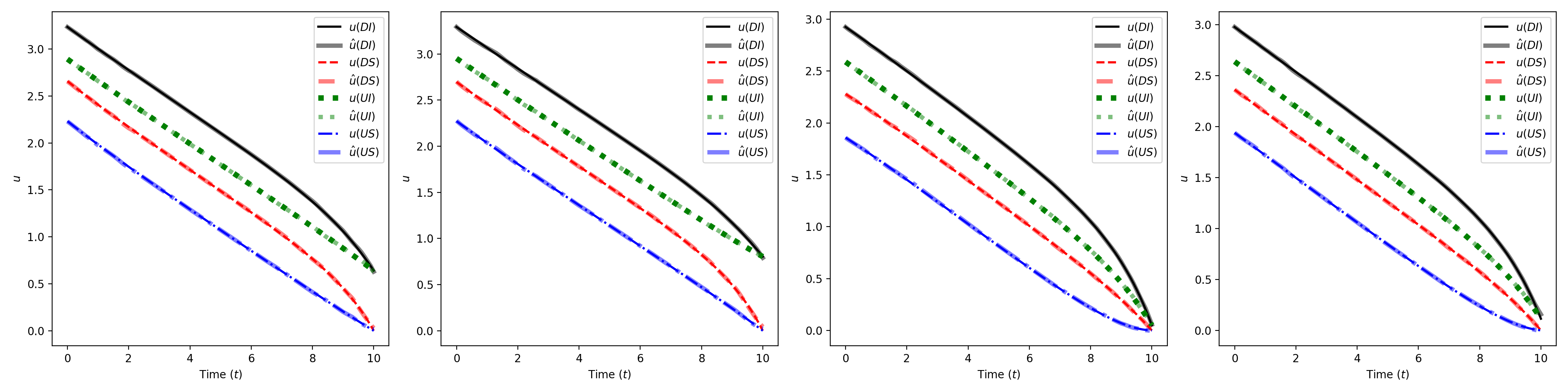}
    \caption{Learned value function, denoted by $\widehat{u}$, approximating time-parametrized flow map $\Phi$, for four random initial distributions and $\kappa \in [0, 1]$.}
    \label{fig:cs-new-examples-time}
\end{figure}

\begin{figure}[h]
    \centering
    \includegraphics[width=\linewidth]{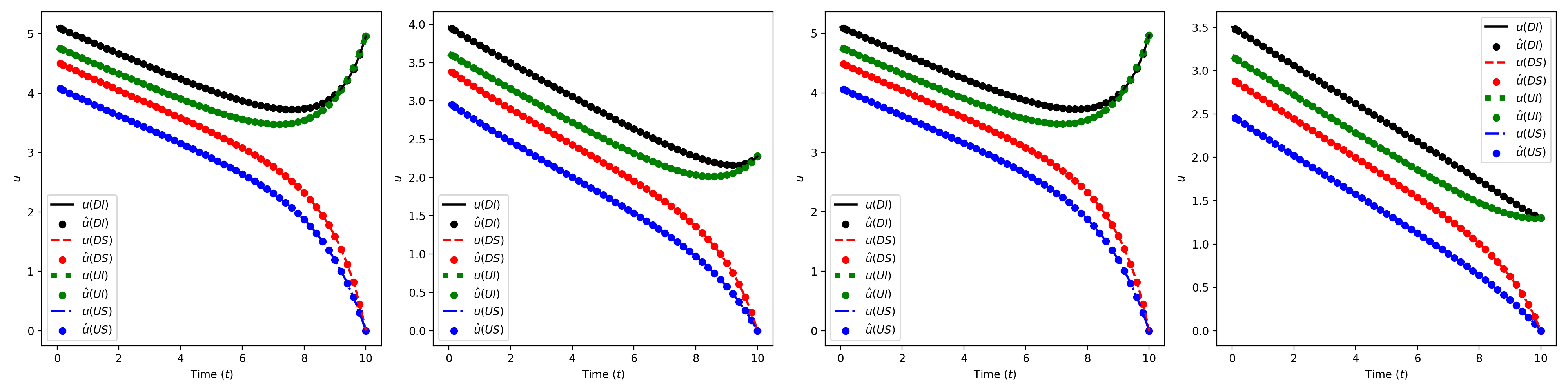}
    \caption{Learned value function for four randomly sampled pairs $(\eta, \kappa)$, with $\kappa \in [0,10]$, along a time discretization with $M = 50$ points for the cybersecurity model. Points indicate the approximate solution and curves indicate the true solution obtained via Picard iteration.}
    \label{fig:cs-mf}
\end{figure}

\begin{figure}[h]
    \centering
    \includegraphics[width=\linewidth]{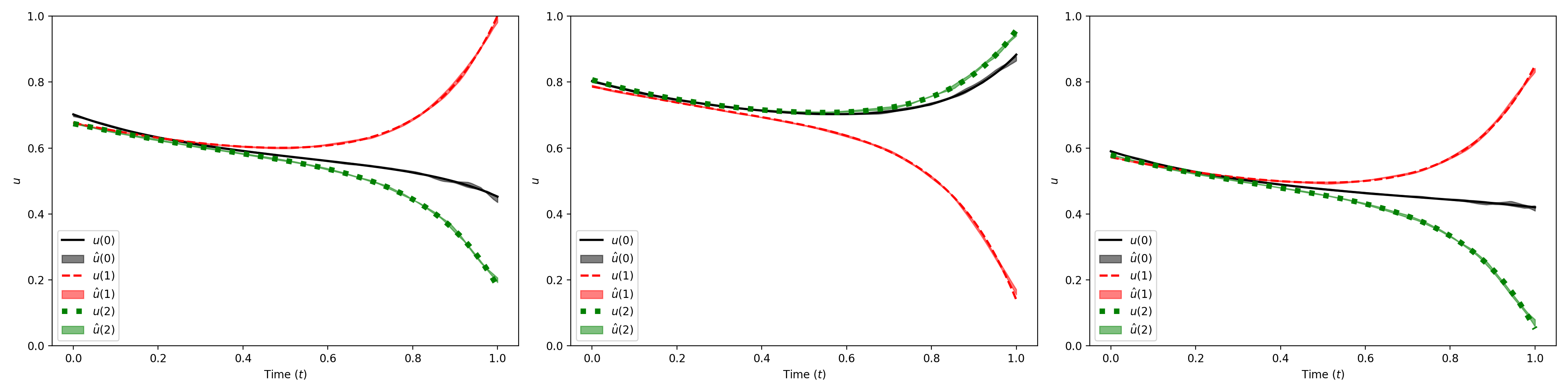}
    \caption{Learned value functions for three randomly sampled pairs $(\eta, \kappa)$, denoted by $\widehat{u}$, approximating the flow map $\Phi$ for a $d = 3$ dimensional quadratic model, for three random initial distributions and parameters $\kappa \in [0, 1]^3$ sampled uniformly at random. Averages are taken across 5 trials, and shaded regions on approximate curves present error bars of one standard deviation above/below the mean across trials.}
    \label{fig:quad-cost-time-d=3}
\end{figure}

\begin{figure}[h]
    \centering
    \includegraphics[width=\linewidth]{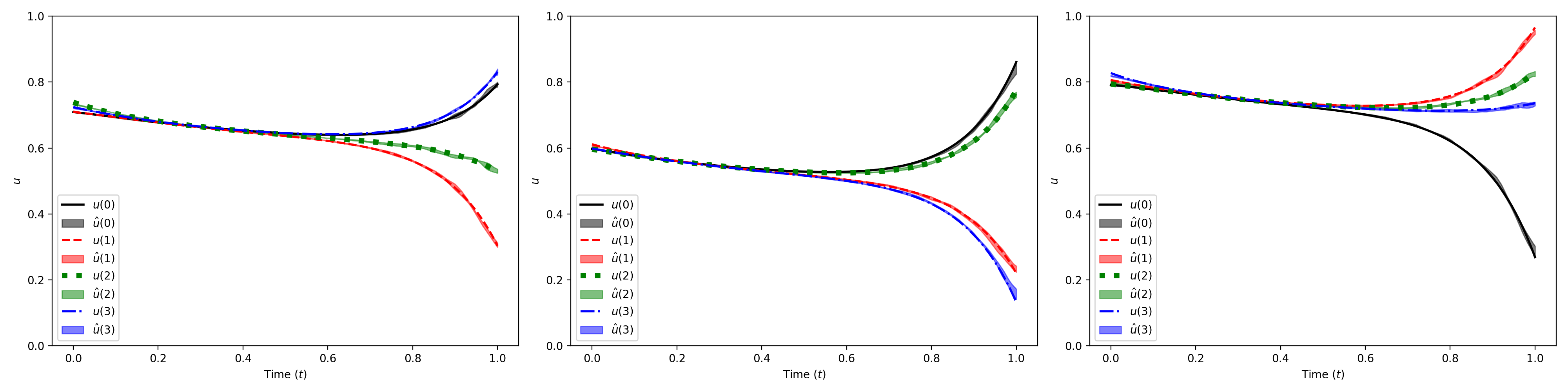}
    \caption{Learned value functions, denoted by $\widehat{u}$, approximating the flow map $\Phi$ for a $d = 4$ dimensional quadratic model, for three random initial distributions and parameters $\kappa \in [0, 1]^4$ sampled uniformly at random. Averages are taken across 5 trials, and shaded regions on approximate curves present error bars of one standard deviation above/below the mean across trials.}
    \label{fig:quad-cost-time-d=4}
\end{figure}

\begin{figure}[h]
    \centering
    \includegraphics[width=\linewidth]{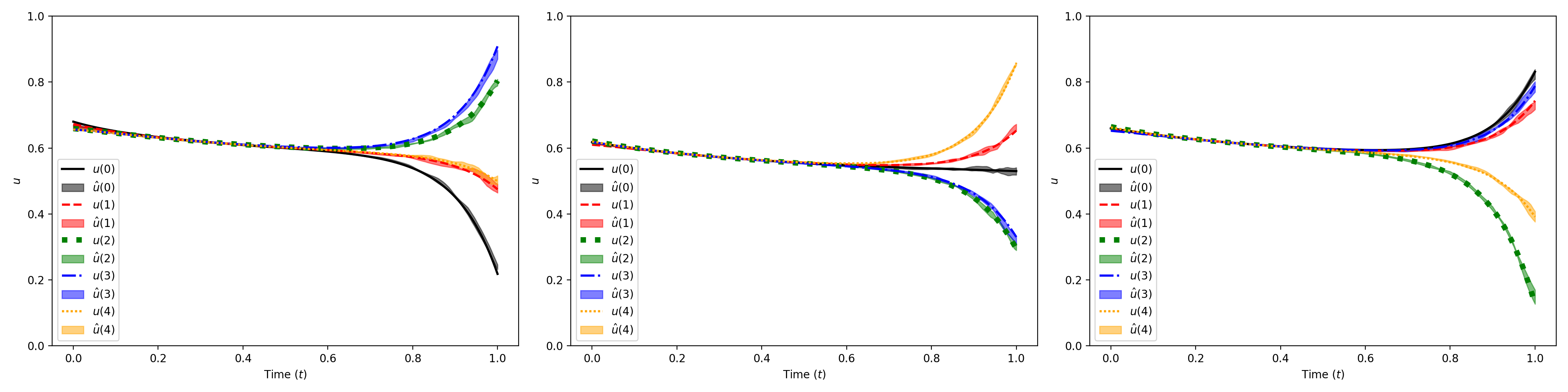}
    \caption{Learned value functions, denoted by $\widehat{u}$, approximating the flow map $\Phi$ for a $d = 5$ dimensional quadratic model, for three random initial distributions and parameters $\kappa \in [0, 1]^5$ sampled uniformly at random. Averages are taken across 5 trials, and shaded regions on approximate curves present error bars of one standard deviation above/below the mean across trials.}
    \label{fig:quad-cost-time-d=5}
\end{figure}

\begin{figure}[h]
    \centering
    \includegraphics[width=\linewidth]{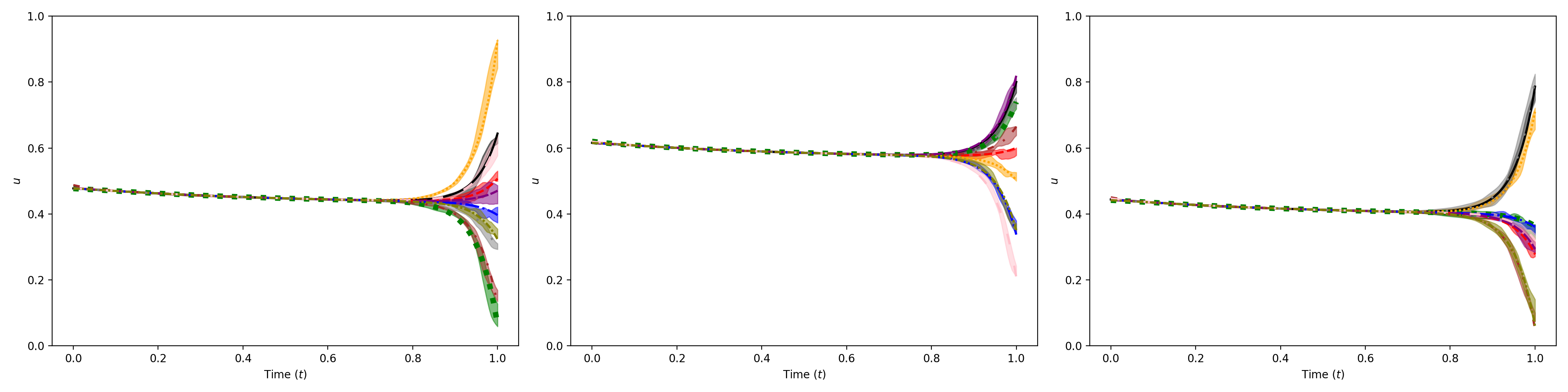}
    \caption{Learned value functions, denoted by $\widehat{u}$, approximating the flow map $\Phi$ for a $d = 10$ dimensional quadratic model, for three random initial distributions and parameters $\kappa \in [0, 1]^{10}$ sampled uniformly at random. Averages are taken across 5 trials, and shaded regions on approximate curves present error bars of one standard deviation above/below the mean across trials.}
    \label{fig:quad-cost-time-d=10}
\end{figure}

\begin{figure}
    \centering
    \begin{subfigure}[b]{0.24\linewidth}
        \centering
        \includegraphics[width=\linewidth]{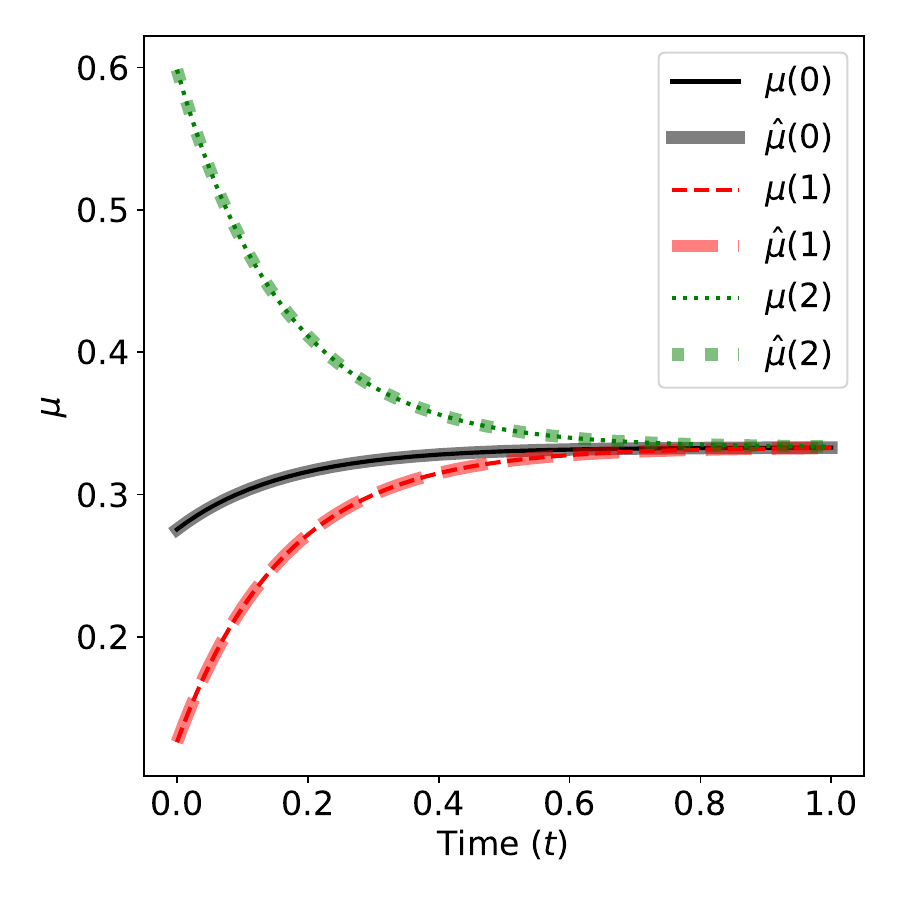}
        \caption{$d = 3$}
        \label{fig:sub1-mu}
    \end{subfigure}
    \hfill
    \begin{subfigure}[b]{0.24\linewidth}
        \centering
        \includegraphics[width=\linewidth]{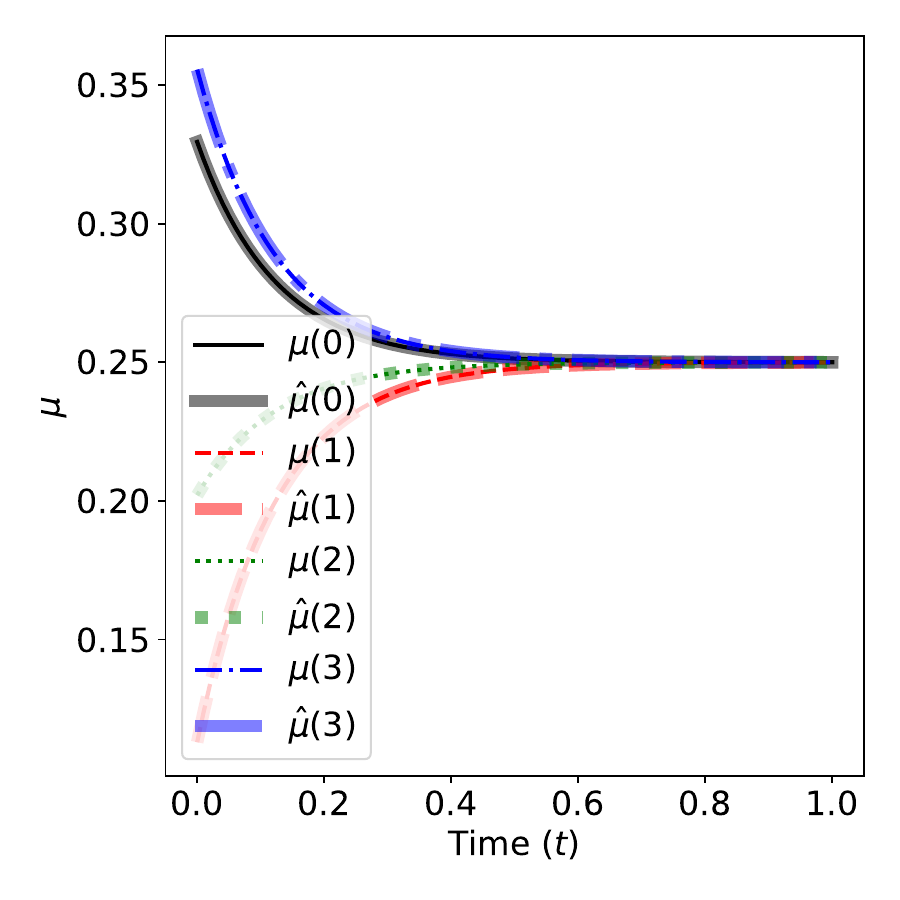}
        \caption{$d = 4$}
        \label{fig:sub2-mu}
    \end{subfigure}
    \hfill
    \begin{subfigure}[b]{0.24\linewidth}
        \centering
        \includegraphics[width=\linewidth]{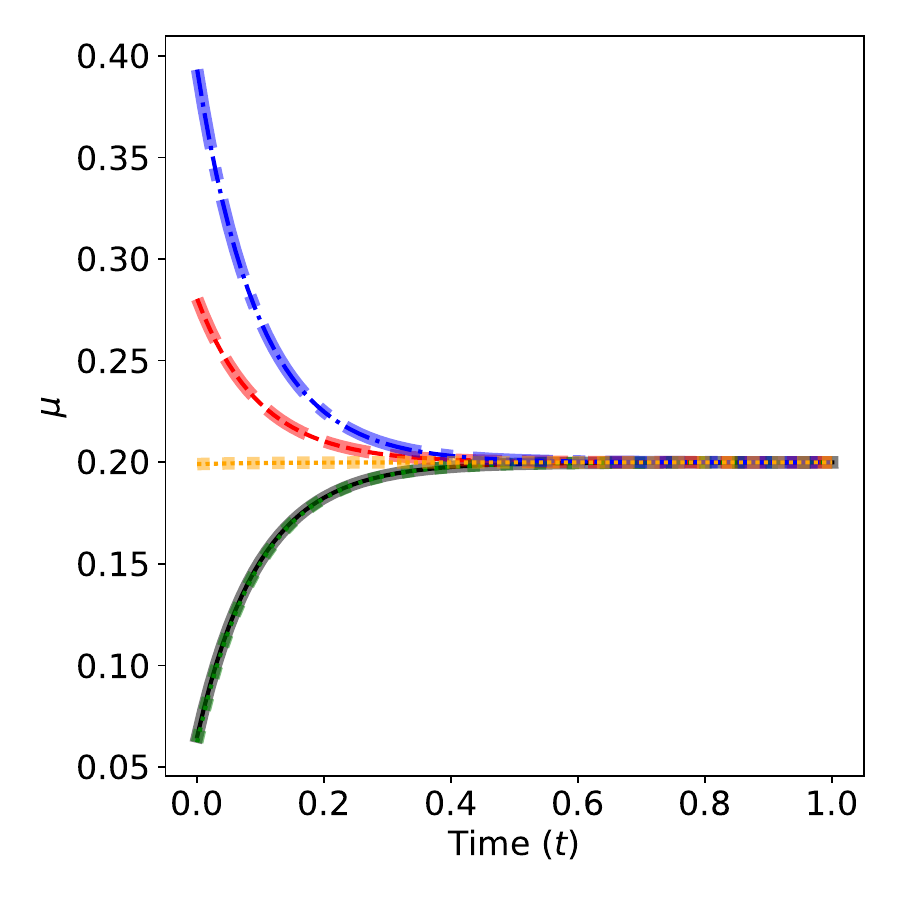}
        \caption{$d = 5$}
        \label{fig:sub3-mu}
    \end{subfigure}
    \hfill
    \begin{subfigure}[b]{0.24\linewidth}
        \centering
        \includegraphics[width=\linewidth]{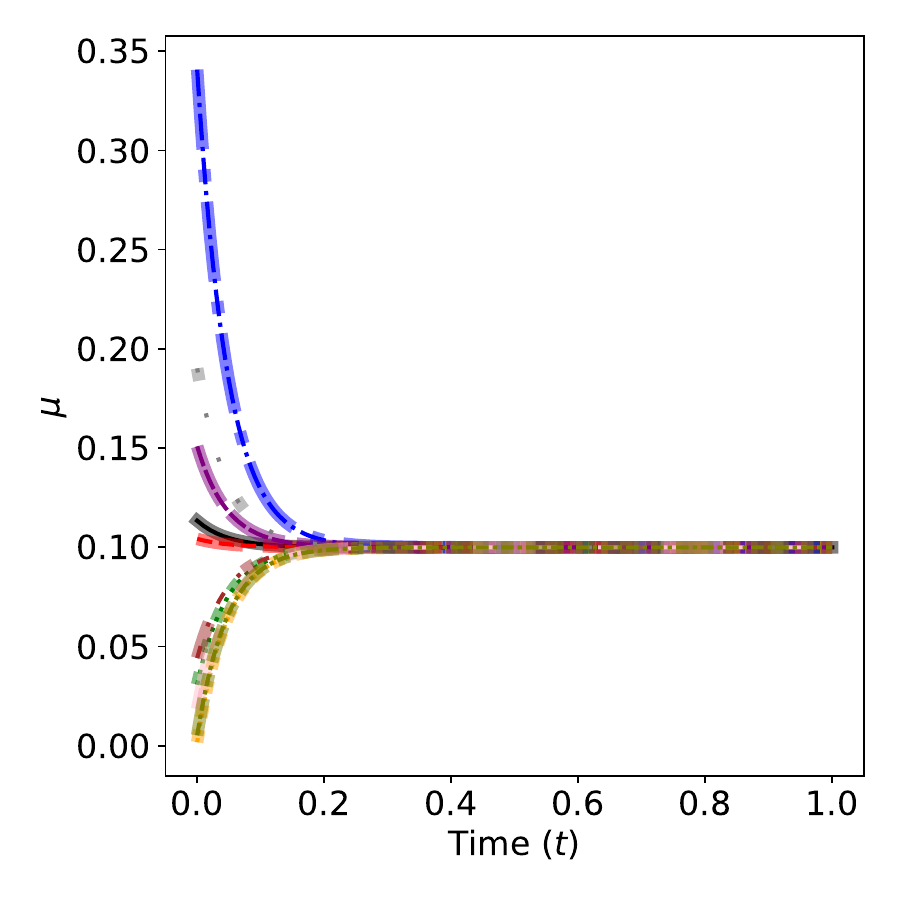}
        \caption{$d = 10$}
        \label{fig:sub4-mu}
    \end{subfigure}

    \caption{Comparison of true flows of measures $\mu$ and learned flows of measures $\widehat{\mu}$ for randomly sampled pairs $(\eta, \kappa)$ in dimensions $d = 3, 4, 5, 10$ respectively.}
    \label{fig:mus-quad-model}
\end{figure}

\begin{figure}[h]
    \centering
    \includegraphics[width=\linewidth]{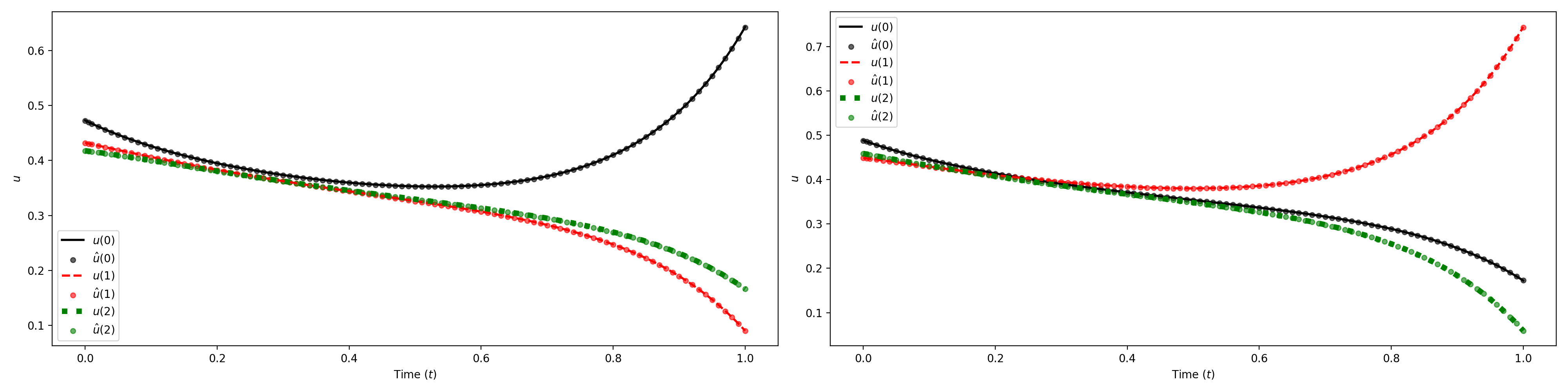}
    \caption{Learned value function for two randomly sampled pairs $(\eta, \kappa)$, along a time discretization with $M = 100$ points in dimensions $d = 3$. Points indicate the approximate solution and curves indicate the true solution obtained via Picard iteration.}
    \label{fig:quad-cost-d=3}
\end{figure}

\begin{figure}[h]
    \centering
    \includegraphics[width=\linewidth]{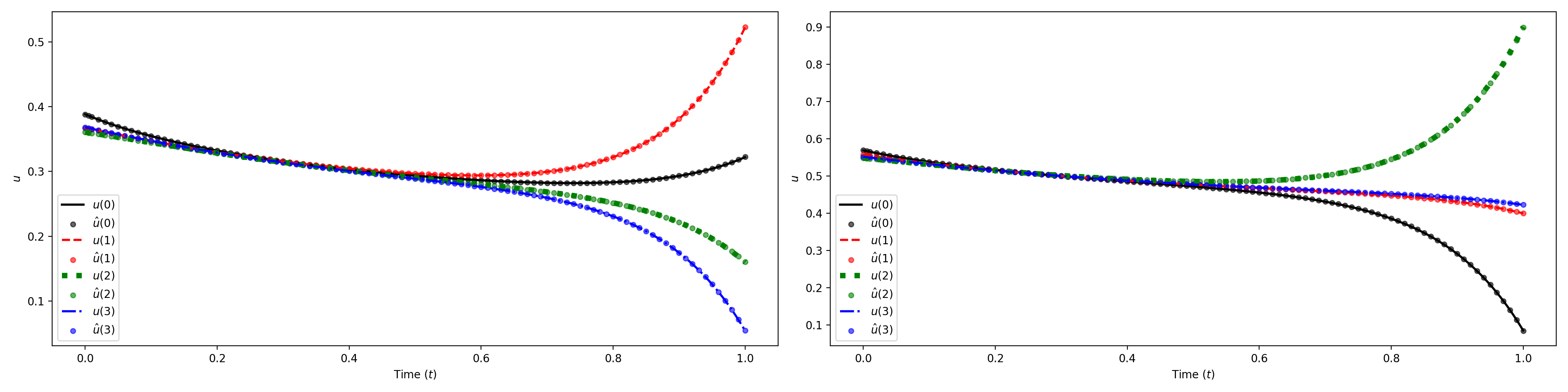}
    \caption{Learned value function for two randomly sampled pairs $(\eta, \kappa)$, along a time discretization with $M = 100$ points in dimensions $d = 4$. Points indicate the approximate solution and curves indicate the true solution obtained via Picard iteration.}
    \label{fig:quad-cost-d=4}
\end{figure}

\begin{figure}[h]
    \centering
    \includegraphics[width=\linewidth]{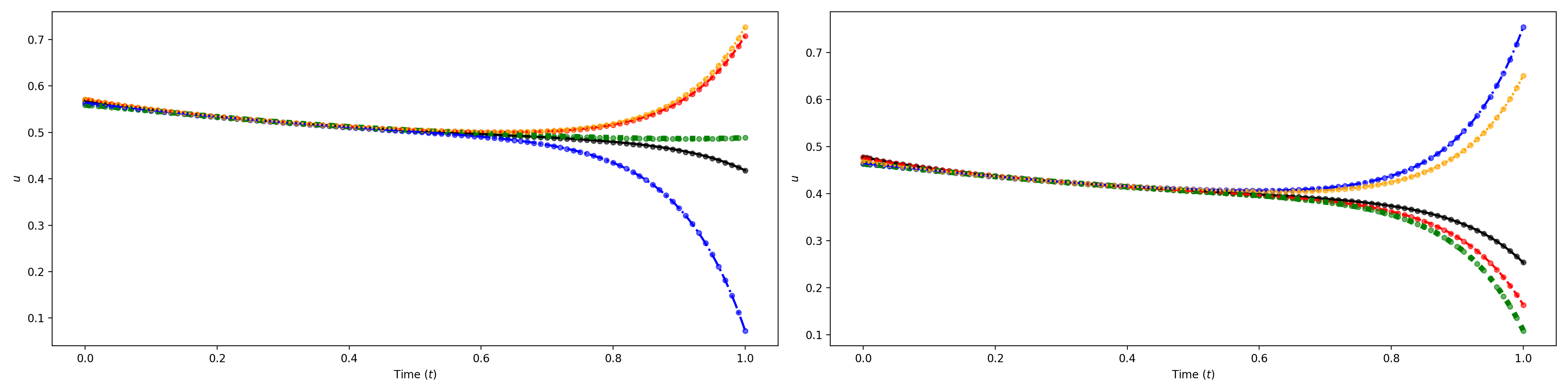}
    \caption{Learned value function for two randomly sampled pairs $(\eta, \kappa)$, along a time discretization with $M = 100$ points in dimensions $d = 5$. Points indicate the approximate solution and curves indicate the true solution obtained via Picard iteration.}
    \label{fig:quad-cost-d=5}
\end{figure}

\begin{figure}[h]
    \centering
    \includegraphics[width=\linewidth]{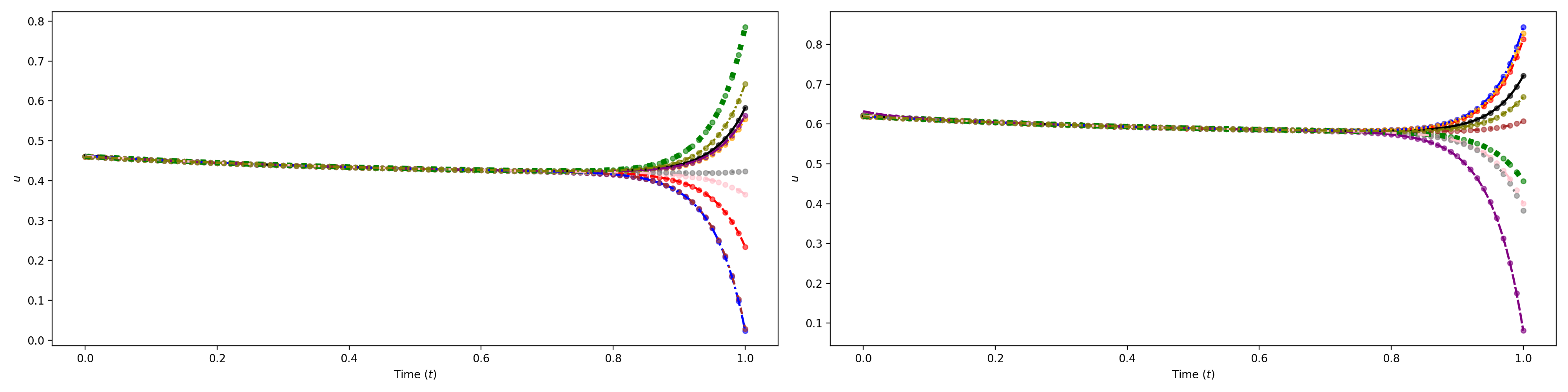}
    \caption{Learned value function for two randomly sampled pairs $(\eta, \kappa)$, along a time discretization with $M = 100$ points in dimensions $d = 10$. Points indicate the approximate solution and curves indicate the true solution obtained via Picard iteration.}
    \label{fig:quad-cost-d=10}
\end{figure}

\begin{figure}[h]
    \centering
    \includegraphics[width=\linewidth]{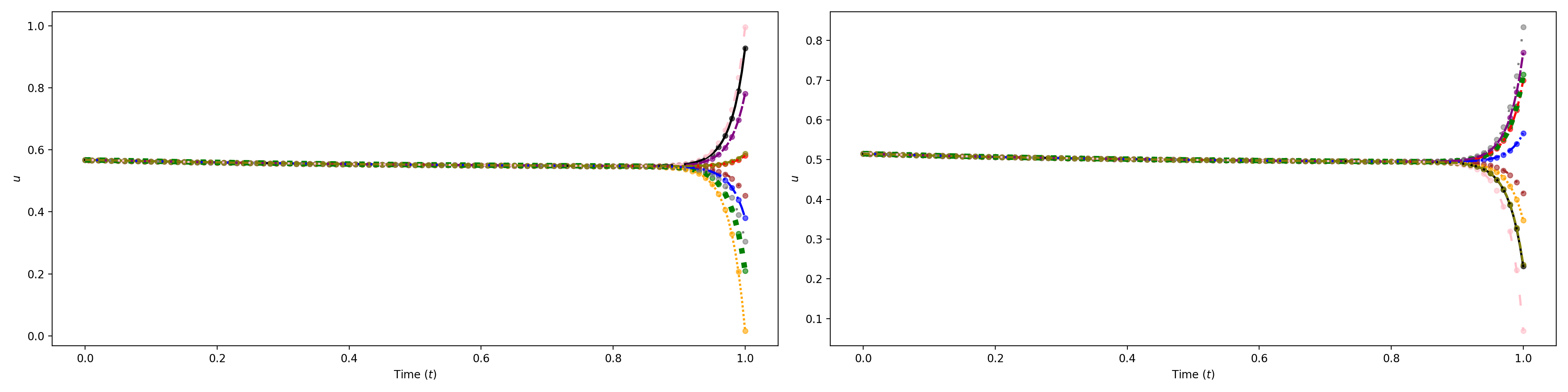}
    \caption{A slice of $10$ components of the learned value functions, for two randomly sampled pairs $(\eta, \kappa)$, along a time discretization with $M = 100$ points in dimensions $d = 20$. Points indicate the approximate solution and curves indicate the true solution obtained via Picard iteration.}
    \label{fig:quad-cost-d=20}
\end{figure}
\clearpage
\end{document}